\numberwithin{equation}{section}
\newtheorem{theorem}{Theorem}[section]
\newtheorem{proposition}[theorem]{Proposition}
\newtheorem{corollary}[theorem]{Corollary}
\newtheorem{lemma}[theorem]{Lemma}
\DeclareMathOperator{\dist}{dist}       
\renewcommand{\div}{\operatorname{div}}
\DeclareMathOperator{\loc}{loc}
\DeclareMathOperator{\supp}{supp}
\DeclareMathOperator{\tr}{tr}
\DeclareMathOperator{\pv}{p.v.}
\renewcommand{\O}{\Omega}
\renewcommand{\o}{\omega}
\newcommand{\dd}{\mathrm{d}}
\newcommand{\N}{\mathbb{N}}
\newcommand{\R}{\mathbb{R}} 
\newcommand{\Rn}{\mathbb{R}^n} 
\newcommand{\Rnn}{\mathbb{R}^{n \times n}}
\newcommand{\Rnnsd}{\mathbb{R}^{n \times n}_{\mathrm{s,d}}}
\newcommand{\ssubset}{\subset \! \subset}
\newcommand{\Sn}{\mathbb{S}^{n-1}}
\renewcommand{\vec}{\mathbf}    %needs no library
\newcommand{\weakc}{\rightharpoonup}
\newcommand{\G}{\Gamma}
\newcommand{\mc}{\mathcal}
\newcommand{\mf}{\mathfrak}
\newcommand{\p}{\partial}
\renewcommand{\d}{\delta}
\newcommand{\g}{\gamma}
\newcommand{\e}{\varepsilon}
\newcommand{\s}{\sigma}
\renewcommand{\a}{\alpha}
\renewcommand{\b}{\beta}
\renewcommand{\l}{\lambda}
\newcommand{\f}{\varphi}
\renewcommand{\r}{\rho}
\newcommand{\Hn}{\mathcal{H}^{n-1}}
\newcommand{\ove}{\overline}
\newcommand{\haz}{\widehat}
\newcommand{\into}{\int_\O}
\newcommand{\xx}{\vec x' {-} \vec x}
\def\Xint#1{\mathchoice
{\XXint\displaystyle\textstyle{#1}}%
{\XXint\textstyle\scriptstyle{#1}}%
{\XXint\scriptstyle\scriptscriptstyle{#1}}%
{\XXint\scriptscriptstyle\scriptscriptstyle{#1}}%
\!\int}
\def\XXint#1#2#3{{\setbox0=\hbox{$#1{#2#3}{\int}$}
\vcenter{\hbox{$#2#3$}}\kern-.5\wd0}}
\def\dashint{\Xint-}
\title{ Quasistatic elastoplasticity via Peridynamics: \\ existence
  and localization}
\author{Martin Kru\v{z}\'ik}
\address[Martin Kru\v{z}\'ik]{Institute of Information Theory and Automation,
Academy of Sciences,
Pod vod\' arenskou ve\v z\' \i\ 4, 18208, Prague 8, Czech Republic.}
\email{kruzik@utia.cas.cz}
\urladdr{http://staff.utia.cas.cz/kruzik/}
\author{Carlos Mora-Corral}
\address[Carlos Mora-Corral]{Universidad Aut\' onoma de Madrid, Departamento de Matem\' aticas,
Facultad de Ciencias, C/ Tom\'as y Valiente 7, 
28049 Madrid, Spain.}
\email{carlos.mora@uam.es}
\urladdr{http://www.uam.es/carlos.mora}
\author{Ulisse Stefanelli}
\address[Ulisse Stefanelli]{Faculty of Mathematics, University of Vienna, 
Oskar-Morgenstern-Platz 1, 1090 Wien, Austria  and  Istituto di Matematica
Applicata e Tecnologie Informatiche \textit{{E. Magenes}}, v. Ferrata 1, 27100
Pavia, Italy.}
\email{ulisse.stefanelli@univie.ac.at}
\urladdr{http://www.mat.univie.ac.at/$\sim$stefanelli}
\begin{document}

\begin{abstract}
 Peridynamics is a nonlocal continuum-mechanical theory based on
minimal regularity on the deformations. Its key trait is that of
replacing local constitutive relations featuring spacial differential
operators with integrals over differences of displacement fields over
a suitable positive interaction range. The
advantage of such perspective is that of directly including nonregular situations, in
which discontinuities in the displacement
field may occur. In the linearized elastic setting, the  mechanical
foundation of the theory and its mathematical amenability have been
thoroughly analyzed in the last years.

We present here the extension of Peridynamics to linearized
elastoplasticity. This calls for considering the time  evolution of elastic
and plastic variables, as the effect of a combination of elastic
energy storage and plastic energy dissipation mechanisms. The
quasistatic evolution problem is variationally reformulated and solved
by time discretization. In addition, by a rigorous evolutive
$\Gamma$-convergence argument we prove that the nonlocal
peridynamic model converges to classic local elastoplasticity as the
interaction range goes to zero.
\end{abstract}

\maketitle
%\part{Linear model}

\section{ Introduction}

Peridynamics is a nonlocal mechanical theory based on the
formulation of equilibrium systems in integral terms
instead of differential relations. Forces acting on a
material point are obtained as a combined effect of interactions with other
points in a neighborhood. This results in an integral featuring a radial
weight which modulates the influence of nearby points in terms of
their distance \cite{Emmrich}. 

Introduced by {\sc Silling} \cite{Silling00}, and
extended in \cite{Silling-Lehoucq08,Silling10},  Peridynamics
 is particularly suited to model situations
where displacements tend to develop discontinuities, such as in the case of
cracks or dislocations \cite{Handbook,Emmrich15}. In addition,
 this nonlocal formulation   is capable of
integrating discrete and continuous descriptions, possibly serving as
a connection between
multiple scales \cite{Seleson09}. As such, it is particularly
appealing in order to model the ever smaller scales of modern
technological applications \cite{survey}.  

In the frame of the peridynamic theory, the elastic equilibrium problem for a linear
homogeneous isotropic body subject to the external force of density $\vec b(\vec x)\in \Rn$  can be
variationally formulated as the minimization of the purely elastic
energy 
\begin{align*}
 E_{\r} (\vec u) = & \b \int_{\O} \mf{D}_{\r} (\vec u) (\vec x)^2 \, \dd \vec x + \a \int_{\O} \int_{\O} \r (\vec x' {-} \vec x) \left( \mc{D} (\vec u) (\vec x, \vec x') - \frac{1}{n} \mf{D}_{\r} (\vec u) (\vec x) \right)^2 \dd \vec x' \, \dd \vec x - \int_{\O} \vec b (\vec x) \cdot \vec u (\vec x) \, \dd \vec x
\end{align*}
among displacements $\vec u(\vec x)\in \Rn$ from a reference configuration $\O\subset \Rn$, subject to
boundary conditions. Here $\r:\Rn\to[0,\infty)$ is an integral kernel
modeling the strength of interactions with respect to the distance of the
points $\vec x '$ and $\vec x$, the term $\mc{D}(\vec u)  (\vec x, \vec x') $
plays the role of a nonlocal elastic strain, projected in the direction $(\xx)/|\xx|$, namely
\begin{equation}
 \mc{D} (\vec u) (\vec x, \vec x') = \frac{\left( \vec u (\vec x') - \vec u (\vec x) \right) \cdot (\vec x' {-} \vec x)}{|\vec x' {-} \vec x|^2},\label{D}
\end{equation}
and $\mf{D}_{\r} (\vec u)(\vec x)$ is a nonlocal analogue of the
divergence and is given by
\begin{equation}
 \mf{D}_{\r} (\vec u) (\vec x) = \pv \int_{\O} \r (\vec x' {-} \vec x) \mc{D}(\vec u) (\vec x, \vec x') \, \dd \vec x' , \qquad \text{for a.e. }\vec x \in \O ,\label{DD}
\end{equation}
where $\pv$ stands for the principal value. The
positive material parameters $\a$ and $\b$ are related to the shear and
bulk moduli of the material, respectively. %In particular, for
%$\alpha=\beta$ one recovers the original formulation in \cite{Silling00}.

The purely elastic energy
$E_\r$ has been recently intensively investigated \cite{DuGuLeZh12,MeDu14,Silling10,Silling-Lehoucq08}. In particular, by suitably qualifying assumptions on
the kernel $\r$, the force $\vec b$, and by imposing boundary
conditions (see below) one can check that $E_\r$ admits a unique
minimizer $\vec u_\r$. In addition, in \cite{MeDu15} it is proved that, in the limit of
vanishing interaction range, that is for $\rho $ converging to a Dirac
delta function centered at $\vec 0$, the nonlocal solutions $\vec u_\r$
converge to the  unique  solution of the classical local elastic equilibrium
system,  namely the minimizer of  
\begin{align*}
 E_0 (\vec u)  
=\frac{\lambda}{2} \int_{\O} \div \vec u (\vec x) ^2 \, \dd \vec x +
\mu \int_{\O}|\nabla^s \vec u (\vec x)|^2 \, \dd \vec x - \int_{\O} \vec b (\vec x) \cdot \vec u (\vec x) \, \dd \vec x.
\end{align*} The symbol $\nabla^s$ stands for the linearized strain
$\nabla^s\vec u = (\nabla \vec u + (\nabla \vec u)^\top)/2$ and the Lam\'e coefficients $\lambda$ and $\mu$ are related to $\alpha$, $\beta$, and $n$
via \cite[App.\ A]{MeDu15}
\begin{equation}
 \lambda = 2\beta -  \frac{4\alpha}{n(n+2)}, \quad
\mu= \frac{2\alpha}{n+2}.\label{eq:lame}
\end{equation}
Note that $\mu>0$ and $n\lambda + 2 \mu>0$, making the elastic energy
coercive. Indeed, calling $\vec u_\r$ and $\vec u_0$ the minimizers of
$E_\r$ and $E_0$, respectively,  the convergence of $\vec u_\r$ to
$\vec u_0$  follows from the $\Gamma$-convergence of $E_{\r}$ to $E_0$ \cite{DalMaso,DeGiorgi}.

The focus of this paper is on extending the elastic
theory to encompass plastic effects as well. This calls for
considering the
plastic strain $\vec P \in \Rnnsd$ (symmetric and deviatoric tensors)
as an additional variable and to define the elastoplastic energy as
  \begin{align}
 F_{\r} (\vec u, \vec P) &=  \b \int_{\O} \mf{D}_{\r} (\vec u) (\vec x)^2 \, \dd \vec x + \a \int_{\O} \int_{\O} \r (\vec x' {-} \vec x) \left( \mc{E} (\vec u, \vec P) (\vec x, \vec x') - \frac{1}{n} \mf{E}_{\r} (\vec u, \vec P) (\vec x) \right)^2 \dd \vec x' \, \dd \vec x \nonumber\\
 & + \g \int_{\O} |\vec P (\vec x)|^2 \, \dd \vec x- \int_{\O} \vec b (\vec x) \cdot \vec u (\vec x) \, \dd \vec x \label{F}
\end{align}
where the nonlocal elastic strain, projected in direction $(\xx)/|\xx|$,
features now  the additional contribution of the plastic strain as
\begin{equation}
 \mc{E} (\vec u, \vec P) (\vec x, \vec x') = \frac{\left( \vec u
     (\vec x') - \vec u (\vec x) - \vec P (\vec x) (\vec x' {-} \vec
     x)\right) \cdot (\vec x' {-} \vec x)}{|\vec x' {-} \vec x|^2}.\label{E}
\end{equation}
Correspondingly, we define
\begin{equation}
 \mf{E}_{\r} (\vec u, \vec P) (\vec x) = \pv \int_{\O} \r (\vec x' {-} \vec x) \mc{E}(\vec u, \vec P) (\vec x, \vec x') \, \dd \vec x',\label{EE}
\end{equation}
which again plays the role of a nonlocal divergence of $\vec
u$. Indeed, although it depends on $\vec P$, one can check that such
dependence vanishes when the kernel $\r$ tends to the Dirac delta
function at $\vec 0$ as $\vec P $ is assumed to be deviatoric, 
see Lemma \ref{le:Eddiv}.a.  

With
respect to the purely elastic case of $E_\r$, an additional
$\gamma$-term is here considered. This models kinematic hardening and $\g>0$
is the corresponding hardening coefficient. Note that the whole energy
$F_\r$ is quadratic in $(\vec u, \vec P)$. This results in a
linearized theory of elastoplasticity, although of a nonlocal
nature. The corresponding localized elastoplastic energy is the
classical
\begin{align*}
 F_0 (\vec u, \vec P) =\frac{\lambda}{2} \int_{\O} \div \vec u (\vec x) ^2 \, \dd \vec x +
\mu \int_{\O}|\nabla^s \vec u (\vec x) - \vec P (\vec x)|^2 \, \dd \vec x - \int_{\O} \vec b (\vec x) \cdot \vec u (\vec x) \, \dd \vec x +
 \g \int_{\O} |\vec P (\vec x)|^2 \, \dd \vec x .
\end{align*}

Elastoplastic evolution requires the specification of the plastic
dissipation mechanism. We follow here the classical von Mises choice: given some
yield stress $ \sigma_y >0$, we specify the energy dissipated in order to pass
from the plastic state $\vec P_0$ to $\vec P_1$ as 
$$H(\vec P_1 {-} \vec P_0) =  \sigma_y \int_\O |\vec P_1(\vec x) {-} \vec
P_0(\vec x)|\, \dd \vec x.$$
We let the action of the external force density $\vec
b$ to be depending on time and correspondingly investigate
trajectories $t \mapsto (\vec u_\r(t),\vec P_\r(t))$ solving the
quasistatic evolution system
\begin{align}
  \partial_{\vec u} F_\r (\vec u_\r(t), \vec P_\r(t),t) &= \vec 0  
   , \label{var100}\\
\partial_{\dot{\vec P}} H(\dot{\vec P}_\r(t)) + \partial_{\vec P} F_\r
  (\vec u_\r(t), \vec P_\r(t),t) &\ni \vec 0.  \label{var200}
\end{align} 
The symbol $\partial$ above is the subdifferential in the sense of
convex analysis and the dot in \eqref{var200} denotes the time derivative. Relation \eqref{var100} corresponds to the weak
formulation of the
quasistatic equilibrium system. Relation
\eqref{var200} is the plastic flow rule instead. In particular, as $H$
is not smooth in $\vec 0$, relation \eqref{var200} is actually a pointwise
inclusion. Quasistatic evolution in the present nonlocal peridynamic
elastoplastic context is then driven by the pair of functionals
$(F_\r,H)$ whereas the choice $(F_0,H)$ correspond to classical
localized elastoplasticity.

The two main results of this paper are the following:

\begin{center}
  \begin{minipage}{1.0\linewidth}
    \begin{itemize}
    \item {\bf (Theorem \ref{thm1})} Under suitable assumptions on the
      data and the kernel $\r$, there exists a unique trajectory
      $t \mapsto (\vec u_\r(t),\vec P_\r(t))$ solving the nonlocal
      quasistatic evolution system.\\[-1mm]
    \item {\bf (Theorem \ref{thm:conv_quasi})} If $\r$ converges to
      the Dirac delta  function  at $\vec 0$, then the solutions
      $t \mapsto (\vec u_\r(t),\vec P_\r(t))$ converge to the unique
      quasistatic evolution $t \mapsto (\vec u_0(t),\vec P_0(t))$ for
      local classical elastoplasticity.
    \end{itemize}
  \end{minipage}
\end{center}

 In the hyperelastic case, some  corresponding variational theory and its
rigorous relation to local elasticity has recently been settled
in \cite{BeMo14,Bellido15,MeDu15}.
To our knowledge, this paper contributes the first variational peridynamic model
including internal variables. Note that damage and plastic effects in the frame
of Peridynamics have already been considered in \cite{Emmrich16,Hu12}
and \cite{Madenci16}, respectively.
The analysis of the well-posedness of the
quasistatic evolution (Theorem \ref{thm1}) and the localization proof (Theorem \ref{thm:conv_quasi}) seem unprecedented
out of the elastic context. 

The well-posedness result is based on time discretization.
After explaining the functional setup (Section \ref{se:functional}), in Section \ref{se:static} we investigate incremental problems of the form 
$$\min\big(F_\r(\vec u,\vec P,t_i) + H(\vec P {-} \vec P_{\rm
  old})\big)$$
where the previous plastic state $\vec P_{\rm old} $ and the time $t_i$
are given. These minimization problems are proved to be well-posed
(Subsection \ref{se:ex_static})  and to
converge in the sense of $\Gamma$-convergence to the corresponding
localized counterparts as the kernel $\r$ approaches the Dirac delta
function at $\vec 0$ (Subsection \ref{se:Glimit}). By passing to the
limit in the time
discretized problem as the time step goes to zero, one recovers the
unique solution to the quasistatic evolution system (Section
\ref{se:quasistatic}). Such limit passage is made possible by the quadratic nature of the
energy (Subsection \ref{se:well_posedness_quasistatic}).

The localization result is derived by applying the general theory of
evolutive $\Gamma$-convergence for rate-independent evolution from \cite{MRS}. In
particular, such possibility rests upon the $\Gamma$-convergence of
the energies and
the specification of a recovery sequence for a suitable combination of
energy and dissipation terms  (Subsection \ref{se:local_limit_quasi}). This again crucially exploits the fact that
energies are quadratic.

%%%%%%%%%%%%%%%%%%%%%%%%%%%%%%%%%%%%%%%%%%%%%%%%%%%%%%%%%%%%%%%%%%%%%%%%%%%%%%%%%%%%%%%%%%%%%%%%%%%%%%%%%%%%%%%
\section{ Functional setup}\label{se:functional}

 We devote this section to present our assumptions and
introduce some notation. In the following, we will use lower-case bold
letters for vectors in $\Rn$ and capitalized bold letters for tensors
in $\Rnn$. In particular $\vec a \cdot \vec b$ is the standard scalar product. We use the symbol $\vec I$ for the identity, $\vec A: \vec B= \tr
(\vec A^\top\vec B)$ for the standard contraction product, $|\vec A|^2 = \vec
A: \vec A$ for the norm, and recall that an infinitesimal rigid
displacement is a function of the form $\vec x \mapsto \vec S \vec x +
\vec v$ with $\vec S \in \Rnn$ skew-symmetric and $\vec v \in \Rn$. 

Let $\Omega \subset \Rn$ (open, bounded and Lipschitz) be the reference
configuration of the body. The state of the medium is described by the
pair $(\vec u,\vec P)$, where  $\vec u: \Omega \times (0,T) \to \Rn$
is the displacement and $\vec P: \Omega \times (0,T) \to \Rnnsd$ is
the plastic strain. Here, $T>0$
is a final reference time and $\Rnnsd$ stands for the set of symmetric
trace-free (deviatoric)  matrices, namely $\tr \vec P(\vec x,t) =
0$. We also use the symbol  $\mc{R}$ for the $L^2 (\O, \Rn)$ subset
of infinitesimal rigid displacements in $\O$. We
will indicate by $\| \cdot \|_p$ the norm of any $L^p$ space on $\Omega$.% and
%regard $\mc{R}$ as a subset of $L^2 (\O, \Rnn)$

Let an integral kernel $\r \in L^1(\Rn,[0,\infty))$ with
$\|\r\|_1=n$ be given. We define for all $(\vec u, \vec P)\in
L^2(\O,\Rn)\times L^2(\O,\Rnnsd)$ the
quantities $\mc{D}(\vec u)(\vec x,\vec x')$, $\mc{E}(\vec u, \vec
P)(\vec x,\vec x')$, $\mf{D}_\r(\vec u)(\vec x)$, and $\mf{E}_\r(\vec u,
\vec P)(\vec x)$ from \eqref{D}--\eqref{DD} and \eqref{E}--\eqref{EE} for a.e.\ $\vec x$ and $\vec x'$ in $\O$. We can hence
define the elastoplastic energy $F_\r$ in \eqref{F} on the whole of
$L^2(\O,\Rn)\times L^2(\O,\Rnnsd)$, possibly taking the value $\infty$.

Note that, by Jensen's (or H\"older's) inequality, 
\begin{align}\label{eq:DleqS}
 \mf{D}_{\r} (\vec u) (\vec x)^2 &\leq \int_{\O} \r (\vec x' {-} \vec
 x) \, \dd \vec x' \, \pv \int_{\O} \r (\vec x' {-} \vec x) \mc{D}
 (\vec u) (\vec x, \vec x')^2 \, \dd \vec x' \nonumber\\
&\leq n \, \pv \int_{\O} \r (\vec x' {-} \vec x) \mc{D} (\vec u)
(\vec x, \vec x')^2 \, \dd \vec x' \qquad \text{for a.e. } \vec x \in \O .
\end{align}
In particular, we have that $\mf{D}_{\r} (\vec u) \in L^2(\O)$  if  
\[
 \int_{\O} \int_{\O} \r (\vec x' {-} \vec x) \mc{D} (\vec u) (\vec x, \vec x')^2 \, \dd \vec x' \, \dd \vec x < \infty .
\]
Accordingly, we define 
\begin{equation*}
 |\vec u|_{\mc{S}_{\r}} = \left( \int_{\O} \int_{\O} \r (\vec x' {-} \vec x) \mc{D} (\vec u) (\vec x, \vec x')^2 \, \dd \vec x' \, \dd \vec x \right)^{1/2} , \qquad \|\vec u\|_{\mc{S}_{\r}} = \left( \|\vec u\|_2^2 + |\vec u|_{\mc{S}_{\r}}^2 \right)^{1/2} .
\end{equation*}
and the space
\begin{equation*}
 \mc{S}_{\r} (\O) = \left\{ \vec u \in L^2 (\O, \Rn) : |\vec u|_{\mc{S}_{\r}} < \infty \right\} .
\end{equation*}
It is immediate to see that $\left| \cdot \right|_{\mc{S}_{\r}}$ is a seminorm and $\left\| \cdot \right\|_{\mc{S}_{\r}}$ is a norm in $\mc{S}_{\r} (\O)$.
In fact, $\mc{S}_{\r} (\O)$ is a separable Hilbert space, as shown in \cite[Th.\ 2.1]{MeDu15}.
One can easily see that $|\vec u|_{\mc{S}_{\r}} = 0$ if and only if $\vec u \in \mc{R}$.

In the following, we will impose homogeneous Dirichlet boundary
conditions  on $\vec u$ by asking
the displacement $\vec u$ to belong to  the closed
subspace $V$ of $L^2 (\O, \Rn)$ given by
\[
 V = \left\{ \vec u \in L^2 (\O, \Rn) :  \vec u = \vec 0 \text{
     a.e.\ in } \o \right\} 
\]
where $\o \subset \O$ is a measurable subset with non-empty interior
such that $\Omega \setminus \overline \o$ is Lipschitz. 
With this choice, it is proved in \cite{DuGuLeZh13ELAS} that $V\cap \mc{R}=\{\vec 0\}$ so that
(nonnull) infinitesimal rigid-body motions are ruled out; see also
\cite{DuGuLeZh13,GuLe10}.

Although we stick with this choice of $V$ in the following, let us
mention that other boundary conditions can be considered as
well. Nonhomogeneous Dirichlet conditions can be easily dealt with and
we
refer to \cite{DuGuLeZh13ELAS} for some detail concerning Neumann
conditions.

As in \eqref{eq:DleqS}, by Jensen's (or H\"older's) inequality, 
\begin{align}\label{eq:ErleqE}
 \mf{E}_{\r} (\vec u, \vec P) (\vec x)^2 & \leq n \, \pv \int_{\O} \r (\vec
                                           x' {-} \vec x) \mc{E} (\vec
                                           u, \vec P) (\vec x, \vec
                                           x')^2 \, \dd \vec x'
                                           \qquad \text{for a.e. } \vec x \in \O .
\end{align}
% Moreover, when we fix a $\r \in L^1 (\Rn, [0,\infty))$ with $\| \r \|_1 = n$, we have, as a consequence of \eqref{eq:Erpoint}, that
% \begin{equation}\label{eq:ErleqE}
%  \int_{\O} \mf{E}_{\r} (\vec u, \vec P) (\vec x)^2 \, \dd \vec x \leq n \int_{\O} \int_{\O} \r (\vec x' {-} \vec x) \mc{E} (\vec u, \vec P) (\vec x, \vec x')^2 \, \dd \vec x' \, \dd \vec x .
% \end{equation}
In addition, since
\begin{equation}\label{eq:DEp}
 \mc{D} (\vec u) (\vec x, \vec x') = \mc{E} (\vec u, \vec P) (\vec x, \vec x') + \frac{\vec P (\vec x) (\vec x' {-} \vec x) \cdot (\vec x' {-} \vec x)}{|\vec x' {-} \vec x|^2} , \qquad \text{a.e. } \vec x, \vec x' \in \O ,
\end{equation}
we also have the bounds
\[
 \mc{D} (\vec u) (\vec x, \vec x')^2 \leq 2 \left( \mc{E} (\vec u, \vec P) (\vec x, \vec x')^2 + |\vec P (\vec x)|^2 \right)
\]
and, hence,
\begin{equation}\label{eq:DleqEp}
 \int_{\O} \int_{\O} \r (\vec x' {-} \vec x) \mc{D} (\vec u) (\vec x, \vec x')^2 \, \dd \vec x' \, \dd \vec x \leq 2 \int_{\O} \int_{\O} \r (\vec x' {-} \vec x) \mc{E} (\vec u, \vec P) (\vec x, \vec x')^2 \, \dd \vec x' \, \dd \vec x + 2n \int_{\O} \left| \vec P (\vec x) \right|^2 \, \dd \vec x .
\end{equation}

% Given  the material parameters  $\a, \b, \g > 0$ and $\vec b \in L^2 (\O, \Rn)$, the proposed
% energy associated with the displacement $\vec u$ and the plastic
% strain $\vec P$ is given by the  elastoplastic energy  functional  $F_{\r} : L^2(\Omega;\Rn)
% \times L^2(\Omega,\Rnnsd) \to \R$ given by 
% \begin{align*}
%  F_{\r} (\vec u, \vec P) := & \b \int_{\O} \mf{D}_{\r} (\vec u) (\vec x)^2 \, \dd \vec x + \a \int_{\O} \int_{\O} \r (\vec x' {-} \vec x) \left( \mc{E} (\vec u, \vec P) (\vec x, \vec x') - \frac{1}{n} \mf{E}_{\r} (\vec u, \vec P) (\vec x) \right)^2 \dd \vec x' \, \dd \vec x \\
%  & + \g \int_{\O} |\vec P (\vec x)|^2 \, \dd \vec x- \int_{\O} \vec b (\vec x) \cdot \vec u (\vec x) \, \dd \vec x .
% \end{align*}
%  This functional corresponds to $E_{\r}$ upon substituting 
% $\mc{D}$ and $\mf{D}_{\r}$ by $\mc{E}$ and $\mf{E}_{\r}$,
% respectively.  In addition, it features a {\it kinematic
%   hardening} term in $|\vec P|^2$ modulated by the hardening coefficient $\g$.

In view of \eqref{eq:DleqS}, \eqref{eq:ErleqE},
and \eqref{eq:DleqEp}, we have that  the elastoplastic energy
$F_{\r} (\vec u, \vec P)$ is finite in $(\vec u, \vec P) \in L^2(\O,\Rn)\times L^2(\O,\Rnnsd)$ if and only if   
\[
 \int_{\O} \int_{\O} \r (\vec x' {-} \vec x) \mc{E} (\vec u, \vec P) (\vec x, \vec x')^2 \, \dd \vec x' \, \dd \vec x < \infty .
\]
Accordingly, we define
\[
  |(\vec u, \vec P)|_{\mc{T}_{\r}} = \left( \int_{\O} \int_{\O} \r (\vec x' {-} \vec x) \mc{E} (\vec u, \vec P) (\vec x, \vec x')^2 \, \dd \vec x' \, \dd \vec x \right)^{1/2} , \qquad 
 \|(\vec u, \vec P)\|_{\mc{T}_{\r}} = \left( \|\vec u\|_2^2 + \|\vec P\|_2^2 + |(\vec u, \vec P)|_{\mc{T}_{\r}}^2 \right)^{1/2}
\]
and the space
\[
 \mc{T}_{\r} (\O) = \left\{ (\vec u, \vec P) \in L^2 (\O, \Rn) \times L^2 (\O, \Rnnsd) : |(\vec u, \vec P)|_{\mc{T}_{\r}} < \infty \right\} .
\]
It is easy to see that $\left| \cdot \right|_{\mc{T}_{\r}}$ is a seminorm and $\left\| \cdot \right\|_{\mc{T}_{\r}}$ is a norm in $\mc{T}_{\r} (\O)$.
We have, in fact, the following result.

\begin{lemma}\label{le:Tr}
We have that $\mc{T}_{\r} (\O) = \mc{S}_{\r} (\O) \times L^2 (\O, \Rnnsd)$, and the norm $\left\| \cdot \right\|_{\mc{T}_{\r}}$ is equivalent to the product norm in $\mc{S}_{\r} (\O) \times L^2 (\O, \Rnnsd)$.
In addition, $\mc{T}_{\r} (\O)$ is a separable Hilbert space.
\end{lemma}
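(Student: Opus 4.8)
The plan is to reduce everything to the pointwise identity \eqref{eq:DEp}, which expresses $\mc{D}(\vec u)$ as the sum of $\mc{E}(\vec u,\vec P)$ and the term $\vec P(\vec x)(\vec x'{-}\vec x)\cdot(\vec x'{-}\vec x)/|\vec x'{-}\vec x|^2$, together with the elementary bound $|\vec P(\vec x)(\vec x'{-}\vec x)\cdot(\vec x'{-}\vec x)|/|\vec x'{-}\vec x|^2\le |\vec P(\vec x)|$ and the normalization $\|\r\|_1=n$, which gives $\int_\O \r(\vec x'{-}\vec x)\,\dd\vec x'\le n$ for a.e.\ $\vec x\in\O$.

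\emph{Step 1 (set equality).} Fix $(\vec u,\vec P)\in L^2(\O,\Rn)\times L^2(\O,\Rnnsd)$. Inequality \eqref{eq:DleqEp} already yields $|\vec u|_{\mc{S}_{\r}}^2\le 2|(\vec u,\vec P)|_{\mc{T}_{\r}}^2+2n\|\vec P\|_2^2$. For the converse, write $\mc{E}(\vec u,\vec P)=\mc{D}(\vec u)-\big(\vec P(\vec x)(\vec x'{-}\vec x)\cdot(\vec x'{-}\vec x)/|\vec x'{-}\vec x|^2\big)$ via \eqref{eq:DEp}, apply the same triangle/Cauchy--Schwarz argument, integrate against $\r(\vec x'{-}\vec x)$ over $\O\times\O$, and use $\int_\O\r(\vec x'{-}\vec x)\,\dd\vec x'\le n$ to control the plastic term; this gives $|(\vec u,\vec P)|_{\mc{T}_{\r}}^2\le 2|\vec u|_{\mc{S}_{\r}}^2+2n\|\vec P\|_2^2$. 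Since $\vec P\in L^2$, each right-hand side is finite exactly when the corresponding seminorm is finite, so $|(\vec u,\vec P)|_{\mc{T}_{\r}}<\infty\iff |\vec u|_{\mc{S}_{\r}}<\infty$; that is, $(\vec u,\vec P)\in\mc{T}_{\r}(\O)\iff \vec u\in\mc{S}_{\r}(\O)$, proving $\mc{T}_{\r}(\O)=\mc{S}_{\r}(\O)\times L^2(\O,\Rnnsd)$ as sets.

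\emph{Step 2 (norm equivalence).} Adding $\|\vec u\|_2^2+\|\vec P\|_2^2$ to the two inequalities of Step 1 sandwiches $\|(\vec u,\vec P)\|_{\mc{T}_{\r}}^2$ between $c\,(\|\vec u\|_{\mc{S}_{\r}}^2+\|\vec P\|_2^2)$ and $C\,(\|\vec u\|_{\mc{S}_{\r}}^2+\|\vec P\|_2^2)$ with $c,C$ depending only on $n$, which is exactly the equivalence of $\|\cdot\|_{\mc{T}_{\r}}$ with the product norm on $\mc{S}_{\r}(\O)\times L^2(\O,\Rnnsd)$.

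\emph{Step 3 (separable Hilbert space) and the main point.} The seminorm $|(\vec u,\vec P)|_{\mc{T}_{\r}}$ is induced by the symmetric bilinear form sending $\big((\vec u_1,\vec P_1),(\vec u_2,\vec P_2)\big)$ to $\int_\O\int_\O \r(\vec x'{-}\vec x)\,\mc{E}(\vec u_1,\vec P_1)(\vec x,\vec x')\,\mc{E}(\vec u_2,\vec P_2)(\vec x,\vec x')\,\dd\vec x'\,\dd\vec x$, which is finite on $\mc{T}_{\r}(\O)\times\mc{T}_{\r}(\O)$ by Cauchy--Schwarz; adding the $L^2$ inner products of the $\vec u$- and $\vec P$-components shows $\|\cdot\|_{\mc{T}_{\r}}$ is a Hilbert norm. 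Since $\mc{S}_{\r}(\O)$ is a separable Hilbert space by \cite[Th.\ 2.1]{MeDu15} and $L^2(\O,\Rnnsd)$ is one as well, the product space is a separable Hilbert space, and by Step 2 the norm $\|\cdot\|_{\mc{T}_{\r}}$ is equivalent to this complete, separable norm, so $(\mc{T}_{\r}(\O),\|\cdot\|_{\mc{T}_{\r}})$ is itself a separable Hilbert space. There is no deep obstacle here; the only points requiring care are the passage from the pointwise bound on the plastic term to the integrated one via $\int_\O\r(\vec x'{-}\vec x)\,\dd\vec x'\le n$, and checking that the bilinear form above is genuinely finite on all of $\mc{T}_{\r}(\O)$ so that $\|\cdot\|_{\mc{T}_{\r}}$ derives from an inner product — completeness and separability are then inherited from the product space through the equivalence of Step 2.
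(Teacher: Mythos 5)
Your proof is correct and follows essentially the same route as the paper: both reduce to the pointwise bound $|\mc{E}(\vec 0,\vec P)(\vec x,\vec x')|\le |\vec P(\vec x)|$ together with $\|\r\|_1=n$ to compare $|(\vec u,\vec P)|_{\mc{T}_\r}$ and $|\vec u|_{\mc{S}_\r}$, then invoke \cite[Th.\ 2.1]{MeDu15} for separability and completeness. The only (cosmetic) difference is that you pass through the inequality $(a+b)^2\le 2a^2+2b^2$ whereas the paper applies the triangle inequality for the seminorm $|\cdot|_{\mc{T}_\r}$ before squaring; your explicit check that $\|\cdot\|_{\mc{T}_\r}$ comes from an inner product is a worthwhile added detail, since norm equivalence alone does not transfer the Hilbert structure.
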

\begin{proof}
By the triangle inequality,
\[
 |(\vec u, \vec P)|_{\mc{T}_{\r}} \leq |(\vec u, \vec 0)|_{\mc{T}_{\r}} + |(\vec 0, \vec P)|_{\mc{T}_{\r}} = |\vec u|_{\mc{S}_{\r}} + |(\vec 0, \vec P)|_{\mc{T}_{\r}} \quad \text{and} \quad |\vec u|_{\mc{S}_{\r}} = |(\vec u, \vec 0)|_{\mc{T}_{\r}} \leq |(\vec u, \vec P)|_{\mc{T}_{\r}} + |(\vec 0, \vec P)|_{\mc{T}_{\r}} .
\]
Now, $| \mc{E} (\vec 0, \vec P) (\vec x, \vec x') | \leq |\vec P (\vec x)|$ for a.e.\ $\vec x, \vec x' \in \O$, so $|(\vec 0, \vec P)|_{\mc{T}_{\r}}^2 \leq n \| \vec P \|_2^2$.
This shows the equivalence of norms.

Finally, $\mc{T}_{\r} (\O)$ is a separable Hilbert space because so is $\mc{S}_{\r} (\O)$ (see \cite[Th.\ 2.1]{MeDu15}).
\end{proof}

For future reference, recall that the proof of Lemma \ref{le:Tr} has shown that
\begin{equation}\label{eq:uSupT}
 |\vec u|_{\mc{S}_{\r}} \leq |(\vec u, \vec P)|_{\mc{T}_{\r}} + \sqrt{n} \| \vec P \|_2 \quad \text{and} \quad |(\vec u, \vec P)|_{\mc{T}_{\r}} \leq |\vec u|_{\mc{S}_{\r}} + \sqrt{n} \| \vec P \|_2 .
\end{equation}

A crucial tool in the following is the nonlocal Korn inequality, which we take from \cite[Prop.\ 2.7]{MeDu15}.

\begin{proposition}[Nonlocal Korn inequality]\label{pr:Korn}
%Let $V$ be a closed subspace of $L^2 (\O, \Rn)$ such that $V \cap \mc{R} = \{ \vec 0 \}$.
 There  exists $C>0$ such that $\| \vec u \|_2^2 \leq C | \vec u |_{S_{\r}}^2$ for all $\vec u \in V$.
\end{proposition}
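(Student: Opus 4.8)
The plan is to argue by compactness and contradiction, reducing everything to one rigidity fact recalled in the excerpt, namely that $|\vec u|_{\mc{S}_\r}=0$ is equivalent to $\vec u\in\mc{R}$, together with $V\cap\mc{R}=\{\vec 0\}$. Suppose the asserted inequality fails for every constant. Then, after a normalization, I obtain a sequence $(\vec u_k)\subset V$ with $\|\vec u_k\|_2=1$ and $|\vec u_k|_{\mc{S}_\r}\to 0$. Since $\|\vec u_k\|_{\mc{S}_\r}^2=\|\vec u_k\|_2^2+|\vec u_k|_{\mc{S}_\r}^2$ stays bounded and, by \cite[Th.\ 2.1]{MeDu15}, $\mc{S}_\r(\O)$ is a separable Hilbert space, up to a subsequence $\vec u_k\weakc\vec u$ in $\mc{S}_\r(\O)$, hence in particular $\vec u_k\weakc\vec u$ in $L^2(\O,\Rn)$.

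Next I would upgrade this to strong $L^2$-convergence using the (sole nontrivial) ingredient that the embedding $\mc{S}_\r(\O)\hookrightarrow L^2(\O,\Rn)$ is compact. Granting it, $\vec u_k\to\vec u$ strongly in $L^2$, so $\|\vec u\|_2=1$, and $\vec u\in V$ because $V$ is closed in $L^2$. On the other hand $|\cdot|_{\mc{S}_\r}$ is a seminorm on $\mc{S}_\r(\O)$ dominated by $\|\cdot\|_{\mc{S}_\r}$, hence continuous and, being convex, weakly lower semicontinuous, so $|\vec u|_{\mc{S}_\r}\le\liminf_k|\vec u_k|_{\mc{S}_\r}=0$. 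Therefore $\vec u\in\mc{R}$, whence $\vec u\in V\cap\mc{R}=\{\vec 0\}$, contradicting $\|\vec u\|_2=1$. (Equivalently, one reads off $C=\sup\{\|\vec u\|_2^2:\vec u\in V,\ |\vec u|_{\mc{S}_\r}=1\}$, which the same argument shows is finite, the case $|\vec u|_{\mc{S}_\r}=0$ being trivial by the rigidity fact.)

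The hard part is the compact embedding $\mc{S}_\r(\O)\hookrightarrow\hookrightarrow L^2(\O,\Rn)$, and this is precisely where the structural hypotheses on $\r$ — in particular non-degeneracy near the origin — are needed; a merely $L^1$ kernel with $\|\r\|_1=n$ does not suffice. I would deduce it from the Riesz–Fréchet–Kolmogorov criterion: for $\vec u\in\mc{S}_\r(\O)$ one establishes a modulus-of-continuity estimate $\|\tau_{\vec h}\vec u-\vec u\|_{L^2(\O_{\vec h})}^2\le\omega(|\vec h|)\,\|\vec u\|_{\mc{S}_\r}^2$ with $\omega(t)\to 0$ as $t\to 0$, by averaging the projected difference quotients $\mc{D}(\vec u)(\vec x,\vec x')$ over a thin cone of directions around $\vec h$ so as to reconstruct the increment $\vec u(\vec x{+}\vec h)-\vec u(\vec x)$ — the tangential components missed by a single projection being recovered by iterating the estimate over finitely many directions spanning $\Rn$ — complemented by the usual control of $\vec u$ near $\p\O$. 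This compactness is exactly what is proved in \cite{MeDu15}, from which the present statement is borrowed; once it is in hand, the contradiction argument above closes the proof.
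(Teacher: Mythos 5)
The paper never proves Proposition \ref{pr:Korn}: it is imported verbatim from \cite{MeDu15} (Prop.~2.7), so there is no internal argument to measure you against. Judged on its own, your contradiction skeleton (normalize, extract a weak limit in the Hilbert space $\mc{S}_{\rho}(\O)$, use weak lower semicontinuity of the seminorm, the rigidity $|\vec u|_{\mc{S}_{\rho}}=0\Leftrightarrow\vec u\in\mc{R}$, and $V\cap\mc{R}=\{\vec 0\}$ from \cite{DuGuLeZh13ELAS}) is sound, but it rests entirely on the compact embedding $\mc{S}_{\rho}(\O)\hookrightarrow\hookrightarrow L^2(\O,\Rn)$, and that is a genuine gap. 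At the point where the proposition is stated, the only standing assumptions are $\rho\in L^1(\Rn,[0,\infty))$ with $\|\rho\|_1=n$; the structural conditions \eqref{newR}--\eqref{newR2} are imposed only later, on the family $\rho_{\delta}$, and are used for the \emph{uniform} inequality of Proposition \ref{prop:Kornd}. Under the standing assumptions the embedding is in general not compact: if, say, $\rho$ is bounded with compact support and $n\geq 3$, then $\int_{\Rn}\rho(\vec z)|\vec z|^{-2}\,\dd\vec z<\infty$, and since $|\mc{D}(\vec u)(\vec x,\vec x')|\leq|\vec u(\vec x')-\vec u(\vec x)|/|\vec x'-\vec x|$ one gets $|\vec u|_{\mc{S}_{\rho}}^2\leq 4\|\vec u\|_2^2\int_{\Rn}\rho(\vec z)|\vec z|^{-2}\,\dd\vec z$, so $\mc{S}_{\rho}(\O)=L^2(\O,\Rn)$ with equivalent norms and the embedding is the identity of an infinite-dimensional space. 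Hence no Riesz--Fr\'echet--Kolmogorov modulus of the type you postulate can exist for such kernels, and your own caveat that ``non-degeneracy near the origin is needed'' concedes hypotheses that the statement does not grant, even though the proposition is asserted (and used) for exactly this class of kernels.

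Moreover, even in the regimes where compactness does hold (kernels singular enough that $\mc{S}_{\rho}(\O)\subsetneq L^2(\O,\Rn)$), you do not prove it: you defer it to \cite{MeDu15}, so the proposal is ultimately no more self-contained than the paper's one-line citation of the Korn inequality itself, except that what you cite is an intermediate claim that fails in the required generality. To close the gap along your lines one must either restrict the kernel class, or replace compactness of the embedding by a different mechanism in the integrable case --- for instance, when $\rho(\vec z)|\vec z|^{-2}\in L^1(\Rn)$ one can expand $|\vec u|_{\mc{S}_{\rho}}^2$ into a multiplication (diagonal) part bounded below on $V$ and an off-diagonal convolution-type part that is a compact operator on $L^2$, and then run the contradiction argument on the compact part only, again concluding via $|\vec u|_{\mc{S}_{\rho}}=0$ and $V\cap\mc{R}=\{\vec 0\}$. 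As written, the proposal does not establish the proposition.
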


The following result is proved in \cite[Lemma 2.1]{Mengesha12} (see also \cite[Eq.\ (15)]{MeDu15}).

\begin{lemma}\label{le:SrW12}
There exists $C>0$ such that for all $\vec u \in H^1 (\O, \Rn)$,
\[
 \left| \vec u  \right|_{\mc{S}_{\r}}^2 \leq C \, n \left\| \nabla^s \vec u \right\|_2^2 .
\]
\end{lemma}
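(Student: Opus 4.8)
The plan is to realise the difference quotient $\mc{D}(\vec u)(\vec x,\vec x')$ as an average of the symmetric gradient $\nabla^s\vec u$ along the segment $[\vec x,\vec x']$ and then integrate. For $\vec u\in H^1$ one has, for a.e.\ $(\vec x,\vec x')$, the Newton--Leibniz identity $\vec u(\vec x')-\vec u(\vec x)=\int_0^1\nabla\vec u(\vec x+t(\vec x'-\vec x))(\vec x'-\vec x)\,\dd t$. Taking the scalar product with $\vec x'-\vec x$, dividing by $|\vec x'-\vec x|^2$, and using that the skew part of $\nabla\vec u$ drops out of the quadratic form $\vec h\mapsto\vec h^\top\nabla\vec u\,\vec h$, we get, with $\vec e:=(\vec x'-\vec x)/|\vec x'-\vec x|\in\Sn$,
\[
 \mc{D}(\vec u)(\vec x,\vec x')=\int_0^1\vec e^\top\,\nabla^s\vec u\big(\vec x+t(\vec x'-\vec x)\big)\,\vec e\,\dd t .
\]
By Jensen's inequality in $t$ and the elementary bound $|\vec e^\top\vec A\,\vec e|\le|\vec A|$ valid for $\vec e\in\Sn$, this yields $\mc{D}(\vec u)(\vec x,\vec x')^2\le\int_0^1|\nabla^s\vec u(\vec x+t(\vec x'-\vec x))|^2\,\dd t$.

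Plugging this estimate into $|\vec u|_{\mc{S}_{\r}}^2$, substituting $\vec h=\vec x'-\vec x$ and then $\vec y=\vec x+t\vec h$ for fixed $\vec h,t$, and applying Fubini together with $\|\r\|_1=n$, one arrives at $|\vec u|_{\mc{S}_{\r}}^2\le n\,\|\nabla^s\vec u\|_2^2$. This argument goes through verbatim (with $C=1$) when $\O$ is convex, since then each segment $[\vec x,\vec x']$ with $\vec x,\vec x'\in\O$ stays inside $\O$, so that the integrand $\nabla^s\vec u(\vec x+t(\vec x'-\vec x))$ is always meaningful. For a general bounded Lipschitz $\O$ one first needs to extend $\vec u$ off $\O$, and the only delicate point is to do so while retaining control by $\|\nabla^s\vec u\|_2$ rather than by the full $H^1(\O)$ norm.

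To this end I would combine a bounded Sobolev extension operator $E_0\colon H^1(\O,\Rn)\to H^1(\Rn,\Rn)$ with the classical Korn inequality on Lipschitz domains: picking $\vec r\in\mc{R}$ with $\|\vec u-\vec r\|_{H^1(\O)}\le C_\O\,\|\nabla^s\vec u\|_2$, the function $\vec v:=E_0(\vec u-\vec r)+\vec r$ lies in $H^1_{\loc}(\Rn,\Rn)$, agrees with $\vec u$ on $\O$, and satisfies $\nabla^s\vec v=\nabla^s E_0(\vec u-\vec r)\in L^2(\Rn,\Rnn)$ with $\|\nabla^s\vec v\|_{L^2(\Rn)}\le\|\nabla E_0(\vec u-\vec r)\|_{L^2(\Rn)}\le C\,\|\vec u-\vec r\|_{H^1(\O)}\le C\,C_\O\,\|\nabla^s\vec u\|_2$. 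Since the integrand defining $|\vec u|_{\mc{S}_{\r}}^2$ over $\O\times\O$ depends only on $\vec v$ and is nonnegative, it is dominated by the same expression over $\Rn\times\Rn$ with $\vec v$ in place of $\vec u$, to which the segment computation of the previous paragraph applies and gives $|\vec u|_{\mc{S}_{\r}}^2\le n\,\|\nabla^s\vec v\|_{L^2(\Rn)}^2\le C\,n\,\|\nabla^s\vec u\|_2^2$. The segment representation and the change-of-variables bookkeeping are routine; the extension step is the sole place demanding some care, and is the natural candidate for the main obstacle, although it is ultimately a mild one handled by classical Korn-type arguments.
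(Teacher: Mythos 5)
Your proof is correct. The paper does not actually prove Lemma \ref{le:SrW12} in-house: it quotes \cite[Lemma 2.1]{Mengesha12} (see also \cite[Eq.\ (15)]{MeDu15}), and your argument is essentially the standard proof behind that citation, namely writing $\mc{D}(\vec u)(\vec x,\vec x')$ as the average of $\vec e^\top \nabla^s\vec u\,\vec e$ along the segment, applying Jensen and Fubini together with $\|\r\|_1=n$, and reducing the general Lipschitz case to a whole-space computation by extension. Your Korn-corrected extension $\vec v=E_0(\vec u-\vec r)+\vec r$ with $\vec r\in\mc{R}$ is exactly the right device to keep the bound in terms of $\|\nabla^s\vec u\|_2$ alone rather than the full $H^1$ norm, and it yields a constant depending only on $\O$ and not on $\r$, which is precisely the point stressed in the remark following the lemma. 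The steps you leave implicit (the a.e.\ segment representation for $H^1_{\loc}$ functions, or alternatively a mollification and density argument, and the translation-invariant change of variables) are indeed routine.
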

We remark that the constant $C$ in Lemma \ref{le:SrW12} does not
depend on $\r$.

% 
% Elastoplastic evolution results from the competition of elastic
% storage and dissipative mechanisms. The latter are assumed to be of
% von-Mises type and hence to be modulated by 
% the  dissipation (pseudo)potential $H: L^1(\O, \Rnnsd) \to [0,+\infty)$ 
% $$H(\dot{\vec P}) = r\int_\O |\dot{\vec P}(\vec x)|\, \dd \vec x. $$
% Here, $r>0$ is the {\it yield stress} activating plastic evolution. 

%%%%%%%%%%%%%%%%%%%%%%%%%%%%%%%%%%%%%%%%%%%%%%%%%%%%%%%%%%%%%%%%%%%%%%
\section{ Incremental problem}\label{se:static}

Let us now turn our attention to the incremental
elastoplastic problem. Given the
plastic strain $\vec P_{\rm old}\in L^2(\O, \Rnnsd)$, it consists in finding $$(\vec u,
\vec P) \in Q=V \times L^2(\O, \Rnnsd)$$ that minimizes the
incremental functional 
\begin{equation}
   F_{\r} (\vec u, \vec P) + H(\vec P - \vec P_{\rm old}). \label{eq:incremental}
\end{equation}

In this section we prove the  well-posedness of the incremental problem
(Subsection
\ref{se:ex_static}) as well as the convergence of its solutions the
solution of its local counterpart as $\delta \to 0$ (Subsection
\ref{se:Glimit}).

 In order to possibly apply the Direct Method to the incremental problem
\eqref{eq:incremental}, the coercivity of $F_{\r}$ will be
instrumental. We check it in the following.

\begin{lemma}[Coercivity of the energy]\label{le:Fcoerc}
There exists $c>0$ such that % for all $(\vec u, \vec P) \in \mc{T}_{\r}$ and all $\eta_1 > 0$,
% \begin{equation}\label{eq:Fgeq}
%  F_{\r} (\vec u, \vec P) \geq c |(\vec u, \vec P)|_{\mc{T}_{\r}}^2 + c \|\vec P\|_2^2 - \frac{\eta_1}{2} \|\vec u\|_2^2 - \frac{1}{2\eta_1} \| \vec b \|_2^2 .
% \end{equation}
% If, in addition, $V$ is a closed subspace of $L^2 (\O, \Rn)$ such that $V \cap \mc{R} = \{ \vec 0 \}$, then there exists $c_1>0$ such that
for all $(\vec u, \vec P) \in  Q$, 
\[
 F_{\r} (\vec u, \vec P) \geq  c  \|(\vec u, \vec P)\|_{\mc{T}_{\r}}^2 -\frac{1}{ c} .
\]
\end{lemma}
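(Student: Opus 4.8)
The plan is to bound $F_\r(\vec u,\vec P)$ from below by splitting it into the nonnegative quadratic stored-energy part, the hardening term $\g\|\vec P\|_2^2$, and the linear load term, and then to absorb the load term by Young's inequality once coercivity of the quadratic part in the full $\mc{T}_\r$-norm has been established. The only genuine work is showing that the sum of the first three integrals in \eqref{F} controls $|(\vec u,\vec P)|_{\mc{T}_\r}^2 + \|\vec u\|_2^2 + \|\vec P\|_2^2$.

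First I would observe that the hardening term already gives $\g\|\vec P\|_2^2$. Next, I want to recover $|(\vec u,\vec P)|_{\mc{T}_\r}^2 = \int_\O\int_\O \r(\xx)\,\mc{E}(\vec u,\vec P)(\vec x,\vec x')^2\,\dd\vec x'\,\dd\vec x$. The $\alpha$-term of $F_\r$ controls $\int_\O\int_\O \r\,(\mc{E} - \tfrac1n\mf{E}_\r)^2$, and expanding the square one has
\[
 \int_\O\!\int_\O \r\,(\mc{E}-\tfrac1n\mf{E}_\r)^2 = \int_\O\!\int_\O \r\,\mc{E}^2 - \frac1n\int_\O \mf{E}_\r(\vec u,\vec P)(\vec x)^2\,\dd\vec x,
\]
using $\|\r\|_1=n$ and the definition \eqref{EE}; meanwhile \eqref{eq:ErleqE} bounds $\int_\O \mf{E}_\r^2$ by $n\int_\O\int_\O \r\,\mc{E}^2$. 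These two facts alone are not enough to bound $\int_\O\int_\O\r\,\mc{E}^2$ by the $\alpha$-term, so I bring in the $\beta$-term together with the identity \eqref{eq:DEp}: writing $\mc{E} = \mc{D}(\vec u) - \frac{\vec P(\vec x)(\xx)\cdot(\xx)}{|\xx|^2}$, one gets $\int_\O \mf{E}_\r(\vec u,\vec P)^2 \le 2\int_\O \mf{D}_\r(\vec u)^2 + 2n\|\vec P\|_2^2$ (the integrand of the plastic correction is bounded by $|\vec P(\vec x)|$ and $\|\r\|_1=n$). Hence
\[
 \int_\O\!\int_\O \r\,\mc{E}^2 = \int_\O\!\int_\O \r\,(\mc{E}-\tfrac1n\mf{E}_\r)^2 + \frac1n\int_\O \mf{E}_\r^2 \le \frac1\a\,\bigl(\text{$\alpha$-term}\bigr) + \frac2n\int_\O \mf{D}_\r(\vec u)^2 + 2\|\vec P\|_2^2,
\]
and $\int_\O \mf{D}_\r(\vec u)^2$ is exactly $\tfrac1\b$ times the $\beta$-term. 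Combining, the sum of the first three integrals of $F_\r$ dominates a constant multiple of $|(\vec u,\vec P)|_{\mc{T}_\r}^2 + \|\vec P\|_2^2$.

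It remains to recover $\|\vec u\|_2^2$, and this is where the nonlocal Korn inequality (Proposition \ref{pr:Korn}) is essential: since $\vec u\in V$, we have $\|\vec u\|_2^2 \le C|\vec u|_{\mc{S}_\r}^2$, and by \eqref{eq:uSupT} $|\vec u|_{\mc{S}_\r} \le |(\vec u,\vec P)|_{\mc{T}_\r} + \sqrt n\|\vec P\|_2$, which is already controlled. So the quadratic part of $F_\r$ plus $\g\|\vec P\|_2^2$ bounds $c_0\|(\vec u,\vec P)\|_{\mc{T}_\r}^2$ for some $c_0>0$. Finally, $\bigl|\int_\O \vec b\cdot\vec u\bigr| \le \|\vec b\|_2\|\vec u\|_2 \le \frac{c_0}{2}\|\vec u\|_2^2 + \frac{1}{2c_0}\|\vec b\|_2^2 \le \frac{c_0}{2}\|(\vec u,\vec P)\|_{\mc{T}_\r}^2 + \frac{1}{2c_0}\|\vec b\|_2^2$ by Young's inequality, so $F_\r(\vec u,\vec P) \ge \frac{c_0}{2}\|(\vec u,\vec P)\|_{\mc{T}_\r}^2 - \frac{1}{2c_0}\|\vec b\|_2^2$, giving the claim with a suitable $c>0$ after possibly shrinking it to account for $\|\vec b\|_2$. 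The main obstacle is the algebra of step two — juggling the $\alpha$- and $\beta$-terms so that $\int\!\int\r\,\mc{E}^2$ really is controlled — since naively only the combination $\mc{E}-\tfrac1n\mf{E}_\r$ appears, and one must use both \eqref{eq:DEp} and the deviatoric structure implicit in the bounds to close the estimate; everything else (Korn, Young) is routine.
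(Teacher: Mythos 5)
Your plan is essentially the paper's argument: both absorb the loss coming from the $\frac1n\mf{E}_\r$ correction into the $\b$- and $\g$-terms via \eqref{eq:DEp} (which yields $\|\mf{E}_{\r}(\vec u,\vec P)\|_2^2\le 2\|\mf{D}_{\r}(\vec u)\|_2^2+2n^2\|\vec P\|_2^2$), then recover $\|\vec u\|_2^2$ from the nonlocal Korn inequality (Proposition \ref{pr:Korn}) combined with \eqref{eq:uSupT}, and finally treat the load term by Young's inequality. The only organizational difference is that the paper uses the weighted inequality $(a-b)^2\ge(1-\eta)a^2-(\eta^{-1}-1)b^2$ and chooses $\eta$ close to $1$ so that the resulting coefficients of $\|\mf{D}_{\r}\|_2^2$ and $\|\vec P\|_2^2$ stay nonnegative, whereas you bound $|(\vec u,\vec P)|_{\mc{T}_{\r}}^2$ from above by a fixed positive combination of the three nonnegative energy terms and then invert; both routes are fine since no sharp constant is required.

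One step as written is inaccurate, though harmlessly so: the claimed identity
\[
\int_{\O}\int_{\O}\r\Big(\mc{E}-\tfrac1n\mf{E}_{\r}\Big)^2\,\dd\vec x'\,\dd\vec x=\int_{\O}\int_{\O}\r\,\mc{E}^2\,\dd\vec x'\,\dd\vec x-\frac1n\int_{\O}\mf{E}_{\r}^2\,\dd\vec x
\]
would require $\int_{\O}\r(\xx)\,\dd\vec x'=n$ for a.e.\ $\vec x$, but this integral is only $\le n$ (the mass of $\r$ falling outside $\O-\vec x$ is lost, e.g.\ for $\vec x$ near $\p\O$). Expanding the square correctly, the cross term is $-\frac2n\int_{\O}\mf{E}_{\r}^2$ and the last term is the nonnegative quantity $\frac1{n^2}\int_{\O}\mf{E}_{\r}(\vec x)^2\int_{\O}\r(\xx)\,\dd\vec x'\,\dd\vec x$, so what you actually get is
\[
\int_{\O}\int_{\O}\r\,\mc{E}^2\,\dd\vec x'\,\dd\vec x\le\int_{\O}\int_{\O}\r\Big(\mc{E}-\tfrac1n\mf{E}_{\r}\Big)^2\,\dd\vec x'\,\dd\vec x+\frac2n\int_{\O}\mf{E}_{\r}^2\,\dd\vec x ,
\]
which is the inequality your scheme needs, only with a factor $2$. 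Similarly, the $2n\|\vec P\|_2^2$ in your bound for $\|\mf{E}_{\r}\|_2^2$ should read $2n^2\|\vec P\|_2^2$. With these constants corrected, the argument closes exactly as you describe.
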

\begin{proof}
 Assume with no loss of generality that  $(\vec u, \vec P) \in \mc{T}_{\r}$.
For any $0 < \eta < 1$ we have 
\begin{equation}\label{eq:Fgeq1}
 \left( \mc{E} (\vec u, \vec P) (\vec x, \vec x') - \frac{1}{n} \mf{E}_{\r} (\vec u, \vec P) (\vec x) \right)^2 \geq (1-\eta) \mc{E} (\vec u, \vec P) (\vec x, \vec x')^2 - (\eta^{-1} - 1) \frac{1}{n^2} \mf{E}_{\r} (\vec u, \vec P) (\vec x)^2 ,
\end{equation}
for a.e.\ $\vec x, \vec x' \in \O$.
On the other hand, thanks to \eqref{eq:DEp} we have
\[
 \mf{E}_{\r} (\vec u, \vec P) (\vec x) = \mf{D}_{\r} (\vec u) (\vec x) - \pv \int_{\O} \r (\vec x' {-} \vec x) \frac{\vec P (\vec x) (\vec x' {-} \vec x) \cdot (\vec x' {-} \vec x)}{|\vec x' {-} \vec x|^2} \dd \vec x' .
\]
In fact,
\[
 \pv \int_{\O} \r (\vec x' {-} \vec x) \frac{\vec P (\vec x) (\vec x' {-} \vec x) \cdot (\vec x' {-} \vec x)}{|\vec x' {-} \vec x|^2} \dd \vec x' = \int_{\O} \r (\vec x' {-} \vec x) \frac{\vec P (\vec x) (\vec x' {-} \vec x) \cdot (\vec x' {-} \vec x)}{|\vec x' {-} \vec x|^2} \dd \vec x'
\]
since
\[
 \left| \int_{\O} \r (\vec x' {-} \vec x) \frac{\vec P (\vec x) (\vec x' {-} \vec x) \cdot (\vec x' {-} \vec x)}{|\vec x' {-} \vec x|^2} \dd \vec x' \right| \leq \int_{\O} \r (\vec x' {-} \vec x) \dd \vec x' |\vec P (\vec x)| \leq n |\vec P (\vec x)| .
\]
Therefore,
\[
 \left| \mf{E}_{\r} (\vec u, \vec P) (\vec x) \right| \leq \left| \mf{D}_{\r} (\vec u) (\vec x) \right| + n \left| \vec P (\vec x) \right| .
\]
Consequently,
\begin{equation*}%\label{eq:ErDr}
 \mf{E}_{\r} (\vec u, \vec P) (\vec x)^2 \leq 2 \mf{D}_{\r} (\vec u) (\vec x)^2 + 2n^2 \left| \vec P (\vec x) \right|^2 , \qquad  \| \mf{E}_{\r} (\vec u, \vec P) \|_2^2 \leq 2 \| \mf{D}_{\r} (\vec u) \|_2^2 + 2n^2 \|\vec P\|_2^2
\end{equation*}
and
\begin{equation}\label{eq:Fgeq2}
 \int_{\O} \int_{\O} \r (\vec x' {-} \vec x) \mf{E}_{\r} (\vec u, \vec P) (\vec x)^2 \, \dd \vec x \leq n \left\| \mf{E}_{\r} (\vec u, \vec P) \right\|_2^2 \leq 2 n \| \mf{D}_{\r} (\vec u) \|_2^2 + 2n^3 \|\vec P\|_2^2 .
\end{equation}
Therefore, by \eqref{eq:Fgeq1} and \eqref{eq:Fgeq2} we have
\begin{equation}\label{eq:Fgeq3}
\begin{split}
 & \int_{\O} \int_{\O} \r (\vec x' {-} \vec x) \left( \mc{E} (\vec u, \vec P) (\vec x, \vec x') - \frac{1}{n} \mf{E}_{\r} (\vec u, \vec P) (\vec x) \right)^2 \dd \vec x' \dd \vec x \\
 \geq & (1-\eta) |(\vec u, \vec P)|_{\mc{T}_{\r}}^2 - (\eta^{-1} -1) \frac{2}{n} \left( \| \mf{D}_{\r} (\vec u) \|_2^2 + n^2 \|\vec P\|_2^2\right) .
\end{split}
\end{equation}
On the other hand, for any $\eta_1 >0$ we have that
\begin{equation}\label{eq:Fgeq4}
 \left| \int_{\O} \vec b \cdot \vec u \, \dd \vec x \right| \leq \|\vec b\|_2 \|\vec u\|_2 \leq \frac{\|\vec b\|_2^2}{2 \eta_1} + \frac{\eta_1 \|\vec u\|_2^2}{2} .
\end{equation}
Using \eqref{eq:Fgeq3} and \eqref{eq:Fgeq4}, we find that
\begin{align*}
 F_{\r} (\vec u, \vec P) \geq & \left[ \b - \frac{2}{n} \a (\eta^{-1} -1) \right] \| \mf{D}_{\r} (\vec u) \|_2^2 +  \a (1 - \eta) |(\vec u, \vec P)|_{\mc{T}_{\r}}^2  \\
 & + \left[ \g - 2n (\eta^{-1}-1) \right]  \|\vec P\|_2^2 - \frac{\eta_1}{2} \|\vec u\|_2^2 - \frac{1}{2\eta_1} \| \vec b \|_2^2 .
\end{align*}
Choosing $0 < \eta < 1$ such that
\[
 \b - \frac{2}{n} \a (\eta^{-1} -1) \geq 0 \quad \text{and} \quad \g - 2n (\eta^{-1}-1) > 0 ,
\]
we have that inequality 

\begin{equation}\label{eq:Fgeq}
  F_{\r} (\vec u, \vec P) \geq c \left( |(\vec u, \vec P)|_{\mc{T}_{\r}}^2 + \|\vec P\|_2^2 \right) - \frac{\eta_1}{2} \|\vec u\|_2^2 -
  \frac{1}{2\eta_1} \| \vec b \|_2^2 
 \end{equation}
  is proved for some $c>0$. 
%Now let $V$ be a closed subspace of $L^2 (\O, \Rn)$ such that $V \cap \mc{R} = \{ \vec 0 \}$.
By Proposition \ref{pr:Korn} and estimate \eqref{eq:uSupT}, we have
\[
  \| \vec u \|_2^2 \leq C |\vec u|_{\mc{S}_{\r}}^2 \leq 2 C \left( |(\vec u, \vec P)|_{\mc{T}_{\r}}^2 + n \| \vec P \|_2^2 \right) \leq 2 n C \left( |(\vec u, \vec P)|_{\mc{T}_{\r}}^2 + \| \vec P \|_2^2 \right) ,
\]
so
\begin{equation}\label{eq:Fgeq5}
 \frac{c}{2} \left( |(\vec u, \vec P)|_{\mc{T}_{\r}}^2 + \|\vec P\|_2^2 \right) + \frac{c}{2} \left( |(\vec u, \vec P)|_{\mc{T}_{\r}}^2 + \|\vec P\|_2^2 \right) \geq \frac{c}{2} \left( |(\vec u, \vec P)|_{\mc{T}_{\r}}^2 + \|\vec P\|_2^2 \right) + \frac{c}{4 n C} \|\vec u\|_2^2 .
\end{equation}
Using \eqref{eq:Fgeq} and \eqref{eq:Fgeq5} we obtain
\[
  F_{\r} (\vec u, \vec P) \geq \frac{c}{2} \left( |(\vec u, \vec P)|_{\mc{T}_{\r}}^2 + \|\vec P\|_2^2 \right) + \frac{c}{4 n C} \|\vec u\|_2^2 - \frac{\eta_1}{2} \|\vec u\|_2^2 - \frac{1}{2\eta_1} \| \vec b \|_2^2 .
\]
Choosing $\eta_1 >0$ so that
\[
 \frac{c}{4 n C} - \frac{\eta_1}{2} > 0
\]
we  prove the  estimate of the statement.
\end{proof}

 The semicontinuity of the second term of $F_{\r}$ will ensue
from the following control on the projected stress. 

\begin{lemma}[Projected-stress control]\label{le:linearbounded}
The transformation $T_{\r}$ that assigns each $(\vec u, \vec P)$ to the map
\[
 (\vec x, \vec x') \mapsto \r (\vec x' {-} \vec x)^{\frac{1}{2}} \left[ \mc{E} (\vec u, \vec P) (\vec x, \vec x') - \frac{1}{n} \mf{E}_{\r} (\vec u, \vec P) (\vec x) \right]
\]
is linear and bounded from $\mc{T}_{\r} (\O)$ to $L^2 (\O \times \O)$.
Moreover, there exists $C>0$, not depending on $\r$, such that for all $(\vec u, \vec P) \in \mc{T}_{\r} (\O)$,
\[
 \left\| T_{\r} (\vec u, \vec P) \right\|_{L^2 (\O \times \O)} \leq C \left| (\vec u, \vec P) \right|_{\mc{T}_{\r} (\O)} .
\]
\end{lemma}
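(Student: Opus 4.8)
The plan is to show separately that $T_\r$ is linear, that it maps into $L^2(\O\times\O)$, and that it satisfies the displayed bound with a $\r$-independent constant; linearity is immediate from the explicit formulas \eqref{E} and \eqref{EE}, since $\mc{E}(\vec u,\vec P)$ and $\mf{E}_\r(\vec u,\vec P)$ are affine in $(\vec u,\vec P)$ and in fact linear (no constant term). So the substance lies in the $L^2$ estimate. First I would expand the square
\[
 \left( \mc{E} (\vec u, \vec P) (\vec x, \vec x') - \tfrac{1}{n} \mf{E}_{\r} (\vec u, \vec P) (\vec x) \right)^2 \leq 2\, \mc{E} (\vec u, \vec P) (\vec x, \vec x')^2 + \tfrac{2}{n^2} \mf{E}_{\r} (\vec u, \vec P) (\vec x)^2 ,
\]
multiply by $\r(\xx)$, and integrate over $\O\times\O$. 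The first term integrates to $2\,|(\vec u,\vec P)|_{\mc{T}_\r}^2$ by definition. For the second term, integrating $\r(\xx)\,\mf{E}_\r(\vec u,\vec P)(\vec x)^2$ in $\vec x'$ first gives a factor $\int_\O \r(\xx)\,\dd\vec x' \le n$, leaving $\tfrac{2}{n}\|\mf{E}_\r(\vec u,\vec P)\|_2^2$. Then \eqref{eq:ErleqE} bounds $\|\mf{E}_\r(\vec u,\vec P)\|_2^2$ by $n\,|(\vec u,\vec P)|_{\mc{T}_\r}^2$. Combining, $\|T_\r(\vec u,\vec P)\|_{L^2(\O\times\O)}^2 \le (2+2)\,|(\vec u,\vec P)|_{\mc{T}_\r}^2$, i.e.\ $C=2$ works, and crucially this constant is an absolute number with no dependence on $\r$.

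Two small technical points deserve care. One is the principal-value issue: $\mf{E}_\r(\vec u,\vec P)(\vec x)$ is defined via a principal value, but the estimate \eqref{eq:ErleqE} (already available in the excerpt) shows it is a genuine $L^2$ function whenever $(\vec u,\vec P)\in\mc{T}_\r(\O)$, so no delicacy arises when squaring and integrating. The other is that boundedness as a map $\mc{T}_\r(\O)\to L^2(\O\times\O)$ requires the estimate against the full norm $\|(\vec u,\vec P)\|_{\mc{T}_\r}$, which follows a fortiori from the estimate against the seminorm $|(\vec u,\vec P)|_{\mc{T}_\r}\le\|(\vec u,\vec P)\|_{\mc{T}_\r}$; combined with linearity this gives that $T_\r$ is a bounded linear operator.

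There is no real obstacle here — the whole argument is a two-line application of the elementary inequality $(a-b)^2\le 2a^2+2b^2$, Fubini, the normalization $\|\r\|_1=n$, and the already-proved Jensen bound \eqref{eq:ErleqE}. The only thing worth being vigilant about is tracking that every constant introduced is independent of $\r$, since that uniformity is what will later be needed for the $\Gamma$-convergence argument as $\r$ approaches the Dirac delta.
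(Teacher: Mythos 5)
Your proof is correct and follows essentially the same route as the paper: both split the map into the pieces $\r^{1/2}\mc{E}(\vec u,\vec P)$ and $\tfrac{1}{n}\r^{1/2}\mf{E}_{\r}(\vec u,\vec P)$, bound the first directly by the definition of $|(\vec u,\vec P)|_{\mc{T}_{\r}}$, and bound the second via $\|\r\|_1=n$ together with \eqref{eq:ErleqE}, with all constants independent of $\r$. The only cosmetic difference is that you use $(a-b)^2\le 2a^2+2b^2$ where the paper uses the triangle inequality in $L^2(\O\times\O)$; the substance is identical.
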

\begin{proof}
The operators $\mc{E}$ and $\mf{E}_{\r}$ are clearly linear, and, hence, so is $T_{\r}$.
The operator
\[
  (\vec x, \vec x') \mapsto \r (\vec x' {-} \vec x)^{\frac{1}{2}} \mc{E} (\vec u, \vec P) (\vec x, \vec x')
\]
is bounded simply because
\[
 \int_{\O} \int_{\O} \r (\vec x' {-} \vec x) \mc{E} (\vec u, \vec P) (\vec x, \vec x')^2 \, \dd \vec x' \, \dd \vec x = |(\vec u, \vec P)|_{\mc{T}_{\r}}^2 .
\]
Analogously, the operator
\[
  (\vec x, \vec x') \mapsto \r (\vec x' {-} \vec x)^{\frac{1}{2}} \mf{E}_{\r} (\vec u, \vec P) (\vec x)
\]
is bounded because, thanks to \eqref{eq:ErleqE},
\[
 \int_{\O} \int_{\O} \r (\vec x' {-} \vec x) \mf{E}_{\r} (\vec u, \vec P) (\vec x)^2 \, \dd \vec x' \, \dd \vec x \leq n \int_{\O} \mf{E}_{\r} (\vec u, \vec P) (\vec x)^2 \, \dd \vec x \leq n^2 |(\vec u, \vec P)|_{\mc{T}_{\r}}^2  .
\]
This concludes the proof.
\end{proof}

\subsection{Well-posedness of the incremental problem}\label{se:ex_static}
 
A key feature of the energy functional $F_{\rho}$ is its strict convexity, which delivers the existence and uniqueness of minimizers.

\begin{proposition}[Strict convexity of $F_{\r}$]\label{prop:strict}
The functional $F_{\rho}$ is strictly convex in $(V \cap \mc{S}_{\r}) \times L^2 (\O, \Rnnsd)$.
\end{proposition}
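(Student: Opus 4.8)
The plan is to exploit the quadratic structure of $F_\r$: write $F_\r = F_\r^{(2)} - \ell$ where $F_\r^{(2)}$ collects the three quadratic terms (the $\b$-term, the $\a$-term, and the $\g$-term) and $\ell(\vec u, \vec P) = \int_\O \vec b \cdot \vec u\,\dd\vec x$ is linear; since the linear part does not affect convexity, it suffices to show $F_\r^{(2)}$ is a nonnegative quadratic form that vanishes only at $(\vec 0, \vec 0)$ on $(V\cap\mc S_\r)\times L^2(\O,\Rnnsd)$. Nonnegativity is clear because each of the three terms is manifestly $\geq 0$. So the whole argument reduces to the definiteness claim: if $(\vec u, \vec P)\in (V\cap\mc S_\r)\times L^2(\O,\Rnnsd)$ satisfies $F_\r^{(2)}(\vec u,\vec P)=0$, then $(\vec u,\vec P)=(\vec 0,\vec 0)$.

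To prove definiteness, suppose $F_\r^{(2)}(\vec u, \vec P)=0$. Then each summand vanishes: $\g\|\vec P\|_2^2 = 0$, hence $\vec P = \vec 0$ (here $\g>0$ is used); $\b\int_\O \mf D_\r(\vec u)(\vec x)^2\,\dd\vec x = 0$, hence $\mf D_\r(\vec u)=0$ a.e.; and the $\a$-term vanishes, so $\mc E(\vec u,\vec P)(\vec x,\vec x') = \tfrac1n \mf E_\r(\vec u,\vec P)(\vec x)$ for a.e.\ $\vec x,\vec x'$. Since $\vec P=\vec 0$, formula \eqref{eq:DEp} gives $\mc E(\vec u,\vec 0)=\mc D(\vec u)$ and $\mf E_\r(\vec u,\vec 0)=\mf D_\r(\vec u)=0$, so in fact $\mc D(\vec u)(\vec x,\vec x')=0$ for a.e.\ $\vec x,\vec x'\in\O$. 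This means $|\vec u|_{\mc S_\r}=0$, which (as recalled just after the definition of $\mc S_\r(\O)$) forces $\vec u\in\mc R$, an infinitesimal rigid displacement. But $\vec u\in V$, and $V\cap\mc R=\{\vec 0\}$ by the result of \cite{DuGuLeZh13ELAS} quoted in the functional setup. Hence $\vec u=\vec 0$, completing the definiteness argument.

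Finally, I would assemble these pieces into strict convexity in the standard way. For a functional of the form $G = G^{(2)} - \ell$ with $G^{(2)}$ a nonnegative quadratic form, strict convexity on a subspace is equivalent to $G^{(2)}$ being positive definite there, which is exactly what the previous paragraph establishes; concretely, for distinct $(\vec u_0,\vec P_0)\neq(\vec u_1,\vec P_1)$ in $(V\cap\mc S_\r)\times L^2(\O,\Rnnsd)$ and $t\in(0,1)$, the parallelogram-type identity for quadratic forms yields
\[
 F_\r\big(t(\vec u_1,\vec P_1)+(1-t)(\vec u_0,\vec P_0)\big) = t F_\r(\vec u_1,\vec P_1)+(1-t)F_\r(\vec u_0,\vec P_0) - t(1-t)\,F_\r^{(2)}\big((\vec u_1,\vec P_1)-(\vec u_0,\vec P_0)\big),
\]
and the last term is strictly negative because the difference is a nonzero element of the subspace on which $F_\r^{(2)}$ is positive definite. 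The only genuinely nontrivial step is the definiteness of $F_\r^{(2)}$, and even there the work is already done for us: the deviatoric constraint on $\vec P$ is not even needed here (it enters elsewhere, e.g.\ in Lemma \ref{le:Eddiv}), since $\g>0$ kills $\vec P$ outright; the real content is the chain ``$\mc D(\vec u)\equiv 0 \Rightarrow \vec u\in\mc R \Rightarrow \vec u=\vec 0$'', both implications being quoted results. So I anticipate no serious obstacle; the proof is essentially a bookkeeping exercise once one isolates the quadratic part and invokes $\g>0$ together with $V\cap\mc R=\{\vec 0\}$.
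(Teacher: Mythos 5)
Your proof is correct, and it reaches the same crux as the paper's: strict convexity reduces to showing that the quadratic part of $F_\r$ (the $\b$-, $\a$-, and $\g$-terms, call it $F_\r^{(2)}$) cannot vanish at a nonzero point of $(V \cap \mc{S}_{\r}) \times L^2 (\O, \Rnnsd)$. The mechanics differ in two places. First, you make the quadratic structure explicit via the identity $F_\r(tz_1+(1-t)z_0)=tF_\r(z_1)+(1-t)F_\r(z_0)-t(1-t)F_\r^{(2)}(z_1-z_0)$, so strict convexity becomes literally equivalent to definiteness of $F_\r^{(2)}$ on the subspace; this is the same ``quadratic trick'' the paper exploits later (Theorems \ref{thm1} and \ref{thm:conv_quasi}) and it gives a quantitative convexity defect. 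The paper instead gets convexity from linearity of $\mf{D}_\r$ and $T_\r$ and then analyzes the equality case using strict convexity of the norms of $L^2(\O,\Rnnsd)$ and $L^2(\O\times\O)$, which is shorter to state but yields the same reduction. Second, in the endgame you go from $|\vec u|_{\mc{S}_\r}=0$ to $\vec u\in\mc R$ and invoke $V\cap\mc R=\{\vec 0\}$ from \cite{DuGuLeZh13ELAS}, whereas the paper applies the nonlocal Korn inequality (Proposition \ref{pr:Korn}) directly to get $\|\vec u\|_2=0$; in this setting the two quoted facts are interchangeable. Two cosmetic remarks: the vanishing of the $\a$-term only forces $\mc{E}(\vec u,\vec P)(\vec x,\vec x')=\tfrac1n\mf{E}_\r(\vec u,\vec P)(\vec x)$ at points where $\r(\vec x'-\vec x)>0$, not for a.e.\ $(\vec x,\vec x')\in\O\times\O$, but since $|\vec u|_{\mc{S}_\r}$ is weighted by $\r$ this is harmless for your conclusion; and your argument does use $\b>0$ (to get $\mf{D}_\r(\vec u)=0$ before treating the $\a$-term), which is indeed a standing assumption of the model.
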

\begin{proof}
The operators $\mf{D}_{\r}$ and $T_{\r}$ (see Lemma \ref{le:linearbounded}) are linear, which readily implies that
$F_{\rho}$ is convex.
Let $(\vec u_1, \vec P_1) , (\vec u_2, \vec P_2) \in (V \cap \mc{S}_{\r}) \times L^2 (\O, \Rnnsd)$ and $\l \in (0,1)$ satisfy
\[
 F_{\r} (\l (\vec u_1, \vec P_1) + (1-\l) (\vec u_2, \vec P_2)) = \l F_{\r} (\vec u_1, \vec P_1) + (1-\l) F_{\r} (\vec u_2, \vec P_2) .
\]
Since the norms in $L^2 (\O, \Rnnsd)$ and in $L^2 (\O \times \O)$
are strictly convex, we find that $\vec P_1 = \vec P_2$ a.e.\ and $T_{\r} (\vec u_1, \vec P_1) = T_{\r} (\vec u_2, \vec P_2)$ a.e. 
Calling $\vec v = \vec u_1 - \vec u_2$, we infer that $\vec v \in V$ and $T_{\r} (\vec v, \vec 0) = 0$.
Thus, $|\vec v|_{\mc{S}_{\r}} = 0$, so, by Proposition \ref{pr:Korn}, $\vec v = \vec 0$ and, hence, $\vec u_1 = \vec u_2$ a.e.
\end{proof}

\begin{theorem}[Well-posedness of the incremental problem]\label{th:existence}
Let  $\vec P_{\rm old} \in L^2 (\O, \Rnnsd) $ be given.
Then there exists a unique minimizer of  $ (\vec u, \vec P)
\mapsto F_{\r} (\vec u, \vec P) + H(\vec P {-} \vec P_{\rm old})$ in $Q$. 
\end{theorem}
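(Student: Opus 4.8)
The plan is to apply the Direct Method of the calculus of variations to the functional $G(\vec u, \vec P) := F_{\r}(\vec u, \vec P) + H(\vec P - \vec P_{\rm old})$ on $Q = V \times L^2(\O, \Rnnsd)$, using the tools assembled above. First I would observe that $G$ is proper: it is finite on $(V \cap \mc{S}_\r) \times L^2(\O,\Rnnsd)$, which by Lemma \ref{le:Tr} coincides with $(Q \cap \mc{T}_\r(\O))$, and this set is nonempty (e.g.\ it contains $(\vec 0, \vec P_{\rm old})$). Next, coercivity: since $H \geq 0$, Lemma \ref{le:Fcoerc} gives $G(\vec u, \vec P) \geq c\|(\vec u, \vec P)\|_{\mc{T}_\r}^2 - 1/c$, so any minimizing sequence $(\vec u_k, \vec P_k)$ is bounded in the Hilbert space $\mc{T}_\r(\O)$ (and in particular $\vec u_k$ stays in the closed subspace $V$). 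Passing to a subsequence, $(\vec u_k, \vec P_k) \weakc (\vec u, \vec P)$ weakly in $\mc{T}_\r(\O)$, with $\vec u \in V$ since $V \cap \mc{S}_\r(\O)$ is a closed subspace, hence weakly closed.

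Then I would establish weak lower semicontinuity of $G$ along this sequence. The linear term $-\int_\O \vec b \cdot \vec u \, \dd\vec x$ is weakly continuous. For the three quadratic terms in $F_\r$: the map $\vec u \mapsto \mf{D}_\r(\vec u) \in L^2(\O)$ is linear and bounded (by \eqref{eq:DleqS}), the map $(\vec u, \vec P) \mapsto T_\r(\vec u, \vec P) \in L^2(\O \times \O)$ is linear and bounded by Lemma \ref{le:linearbounded}, and $\vec P \mapsto \vec P \in L^2(\O, \Rnnsd)$ is the identity; composing each with the (weakly lower semicontinuous) squared $L^2$-norm and using that bounded linear operators are weak-to-weak continuous, all three terms are weakly lower semicontinuous. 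Finally, $H(\cdot - \vec P_{\rm old})$ is a continuous (hence weakly lower semicontinuous by convexity) functional on $L^2(\O, \Rnnsd)$: $\vec Q \mapsto \s_y \|\vec Q\|_1$ is convex and $L^2$-lower semicontinuous on the bounded domain $\O$. Summing, $G$ is weakly lower semicontinuous, so $(\vec u, \vec P)$ is a minimizer.

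For uniqueness I would argue as follows. By Proposition \ref{prop:strict}, $F_\r$ is strictly convex on $(V \cap \mc{S}_\r) \times L^2(\O, \Rnnsd)$, and $H(\cdot - \vec P_{\rm old})$ is convex, so $G$ is strictly convex there; since $G = +\infty$ outside that set, any two minimizers must lie in it and must coincide by strict convexity. Concretely, if $(\vec u_1, \vec P_1)$ and $(\vec u_2, \vec P_2)$ both minimize, then by convexity of $H$ the midpoint satisfies $G\big(\tfrac12(\vec u_1,\vec P_1) + \tfrac12(\vec u_2,\vec P_2)\big) \leq \tfrac12 G(\vec u_1,\vec P_1) + \tfrac12 G(\vec u_2,\vec P_2)$, which forces equality in the $F_\r$ part, and strict convexity of $F_\r$ then yields $(\vec u_1,\vec P_1) = (\vec u_2,\vec P_2)$.

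I do not expect a genuine obstacle here; the one point requiring a little care is the weak lower semicontinuity of the dissipation term $H(\vec P - \vec P_{\rm old}) = \s_y \|\vec P - \vec P_{\rm old}\|_1$ under weak $L^2$-convergence. This follows because $\vec Q \mapsto \|\vec Q\|_1$ is convex and strongly $L^1$-continuous, hence strongly $L^2$-lower semicontinuous on the finite-measure domain $\O$ (where strong $L^2$ convergence implies strong $L^1$ convergence), and a convex functional that is strongly lower semicontinuous is also weakly lower semicontinuous; equivalently one can use Mazur's lemma. Everything else is a direct, routine application of the Direct Method together with Lemmas \ref{le:Fcoerc} and \ref{le:linearbounded} and Proposition \ref{prop:strict}.
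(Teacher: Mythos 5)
Your proposal is correct and follows essentially the same route as the paper: the Direct Method with coercivity from Lemma \ref{le:Fcoerc}, the identification of the relevant space via Lemma \ref{le:Tr}, boundedness of the operators $\mf{D}_{\r}$ and $T_{\r}$ (Lemma \ref{le:linearbounded}), and uniqueness from strict convexity (Proposition \ref{prop:strict}). The only cosmetic difference is that the paper deduces weak lower semicontinuity of the whole functional at once from strong continuity plus convexity, whereas you verify it term by term using weak-to-weak continuity of the bounded linear maps and weak lower semicontinuity of the squared norm; both are standard and equivalent here.
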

\begin{proof}
Call $G_{\r} : Q \to \R\cup \{\infty\}$ the function $G_{\r} (\vec u, \vec P) = F_{\r} (\vec u, \vec P) + H(\vec P {-} \vec P_{\rm old})$.
By Lemma \ref{le:Tr}, it is enough to show existence and uniqueness of minimizers of $G_{\r}$ in $(V \cap \mc{S}_{\r}) \times L^2 (\O, \Rnnsd)$ (recall that $F_\r=\infty$ if $\vec u \notin \mc{S}_{\r}$).
By Lemma \ref{le:Fcoerc}, $G_{\r}$
is bounded from below, so it admits a minimizing sequence $\{ (\vec
u_j, \vec P_j) \}_{j \in \N}$ in $(V \cap \mc{S}_{\r})
\times L^2 (\O, \Rnnsd)$.
By Lemma \ref{le:Fcoerc} again, $\{ (\vec u_j, \vec P_j) \}_{j \in \N}$ is bounded in $\mc{T}_{\r}$.
By Lemma \ref{le:Tr}, $\{ \vec u_j \}_{j \in \N}$ is bounded in
$\mc{S}_{\r}$ and $\{ \vec P_j \}_{j \in \N}$ is bounded in 
$L^2(\O;\Rnnsd)$. 
As $V$ is a closed subspace of $L^2 (\O, \Rn)$, it is also a closed subspace of $\mc{S}_{\r}$.
Therefore, there exists $(\vec u_0, \vec P_0) \in (V \cap \mc{S}_{\r}) \times L^2 (\O, \Rnnsd)$ such that, for a subsequence (not relabelled), $\vec u_j \weakc  \vec u_0$ in $\mc{S}_{\r}$ and $\vec P_j \weakc \vec P_0$ in $L^2 (\O, \Rnnsd)$ as $j \to \infty$.

Bound \eqref{eq:ErleqE} tells us that $\mf{E}_{\r}$ is a linear bounded operator from $\mc{T}_{\r}$ to $L^2 (\O)$.
Having in mind that $\mc{D} (\vec u)= \mc{E} (\vec u, \vec 0)$ and $\mf{D}_{\r} (\vec u)= \mf{E}_{\r} (\vec u, \vec 0)$, we obtain that the operator $\mf{D}_{\r} : \mc{S}_{\r} \to L^2 (\O)$ is linear and bounded.
By Lemma \ref{le:linearbounded}, the map $T_{\r}$ defined therein is linear and bounded.
Altogether, $G_{\r}$ is the sum of continuous functions with respect to the strong topology of $\mc{S}_{\r} \times L^2 (\O, \Rnnsd)$.
On the other hand, thanks to Proposition \ref{prop:strict}, $G_{\r}$ is strictly convex as a sum of the strictly convex function $F_{\r}$ and the convex function $(\vec u, \vec P) \mapsto H(\vec P {-} \vec P_{\rm old})$.
Consequently, $G_{\r}$ is lower semicontinuous with respect to the weak topology of $\mc{S}_{\r} \times L^2 (\O, \Rnnsd)$.
Thus,
\[
 G_{\r} (\vec u_0, \vec P_0) \leq \liminf_{j \to \infty} G_{\r} (\vec u_j, \vec P_j)
\]
and, hence, $(\vec u_0, \vec P_0)$ is a minimizer of $G_{\r}$.
The uniqueness of minimizers is an immediate consequence of the strict convexity of $G_{\r}$.
\end{proof}

\subsection{ Localization  limit}\label{se:Glimit}

 We shall now check that, as $\r$ tends to the Dirac delta
function at
$\vec 0$, the unique solution $(\vec u_{\d},\vec P_{\d})$ of the nonlocal
incremental problem \eqref{eq:incremental} converges to the unique solution
of the incremental problem for local classical linearized
elastoplasticity. To this aim, let us specify that the local elastoplastic
energy $F_0 : Q \to \R \cup \{ \infty\}$ is given by
\begin{align*}
 F_0 (\vec u, \vec P) = & \b \int_{\O} \div \vec u (\vec x) ^2 \, \dd \vec x + \a \, n \int_{\O} \dashint_{\Sn} \left( (\nabla \vec u (\vec x) - \vec P (\vec x)) \vec z \cdot \vec z - \frac{1}{n} \div \vec u (\vec x) \right)^2 \, \dd \Hn (\vec z) \, \dd \vec x \\
 & - \int_{\O} \vec b (\vec x) \cdot \vec u (\vec x) \, \dd \vec x +
 \g \int_{\O} |\vec P (\vec x)|^2 \, \dd \vec x \nonumber\\
& =\frac{\lambda}{2} \int_{\O} \div \vec u (\vec x) ^2 \, \dd \vec x +
\mu \int_{\O}|\nabla^s \vec u (\vec x) - \vec P (\vec x)|^2 \, \dd \vec x - \int_{\O} \vec b (\vec x) \cdot \vec u (\vec x) \, \dd \vec x +
 \g \int_{\O} |\vec P (\vec x)|^2 \, \dd \vec x 
\end{align*}
for $\vec u \in H^1(\Omega,\Rn)$, and $F_0 (\vec u, \vec P) = \infty$ otherwise.
The numbers $\l, \mu$ are given by \eqref{eq:lame}.
Correspondingly, the local incremental elastoplastic problem reads as
follows: Given the
previous plastic strain $\vec P_{\rm old}\in L^2(\O, \Rnnsd)$ find $(\vec u,
\vec P) \in Q$  minimizing 
\begin{equation}
   F_0 (\vec u, \vec P) + H(\vec P{-} \vec P_{\rm old}). \label{eq:incremental_local}
\end{equation}

The proof of existence and uniqueness of the minimizer $(\vec u, \vec P) \in (V \cap H^1(\Omega,\Rn)) \times  L^2(\O, \Rnnsd)$ is standard.

We
start by computing  the $\G$-limit of the functional $F_{\d}$ as
$\r$ tends to the Dirac delta function at $\vec 0$ \cite{DalMaso,DeGiorgi}.
The precise assumptions of the family of kernels $\{ \r_{\d}
\}_{\d>0} \subset L^1(\Rn,[0,\infty))$  with $  \|
  \r_\d\|_1=n$ are as follows:
each $\r_{\d}$ is radial, i.e., there exists $\bar{\r}_{\d} : [0, \infty) \to [0, \infty)$ such that $\r_{\d} (\vec x) = \bar{\r}_{\d} (|\vec x|)$; moreover,
\begin{align}
  & \text{the map } [0,\infty) \ni r \mapsto r^{-2} \bar{\r}_{\d}
  (r)  \text{ is decreasing}, 
  \label{newR}\\
  & \text{and} \ 
 \displaystyle \lim_{\d \to 0} \int_{\Rn \setminus B(\vec 0, r)}
   \r_{\d} (\vec x) \, \dd \vec x = 0 \  \text{ for all } r >0 . 
  \label{newR2}
\end{align}

This set of assumptions (or a slight variant of it) is typical in the analysis of the convergence from a nonlocal functional to a local one; see \cite{BoBrMi01,Brezis02,Ponce04,Ponce04b,MeDu15}.
For ease of notation,  in the following  the subscript  ${\r}$  used in the
previous sections in $F_{\r}$, $\mf{D}_{\r}$, $\mf{E}_{\r}$, $T_{\r}$
and so on is replaced by the subscript  ${\d}$,  meaning that the kernel involved is $\r_{\d}$.

In this section we prove the $\G$-convergence of $F_{\d}$ to $F_0$ as $\d \to 0$ in $L^2 (\O, \Rn) \times L^2 (\O, \Rnnsd)$ endowed with the strong topology in $L^2 (\O, \Rn)$ and the weak topology in $L^2 (\O, \Rnnsd)$, or, equivalently, in $H^1 (\O, \Rn) \times L^2 (\O, \Rnnsd)$ endowed with the weak topology.

First we show that $\mf{E}_{\d} (\vec u, \vec P)$ is an approximation of $\div \vec u$.

\begin{lemma}[Convergence of the divergence]\label{le:Eddiv}
Let $\vec u \in H^1 (\O, \Rn)$ and $\vec P \in L^2 (\O, \Rnnsd)$.
The following holds:
\begin{enumerate}[a)]
\item\label{item:Eddiva} $\mf{E}_{\d} (\vec u, \vec P) \to \div \vec u$ as $\d \to 0$ in $L^2 (\O)$.

\item\label{item:Eddivb} For each $\d>0$ let $\vec u_{\d} \in L^2 (\O, \Rn)$ and $\vec P_{\d} \in L^2 (\O, \Rnnsd)$.
Assume $\vec u_{\d} \to \vec u$ in $L^2 (\O, \Rn)$ and $\vec P_{\d} \weakc \vec P$ in $L^2 (\O, \Rnnsd)$ as $\d \to 0$.
Suppose further that $\sup_{\d>0} |\vec u_{\d}|_{\mc{S}_{\d}} < \infty$. 
Then $\mf{E}_{\d} (\vec u_{\d}, \vec P_{\d}) \weakc \div \vec u$ as $\d \to 0$ in $L^2 (\O)$.

\end{enumerate}
\end{lemma}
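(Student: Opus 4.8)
The plan is to reduce both statements to the behavior of the two building blocks $\mf D_\d(\vec u)$ and
\[
 g_\d(\vec x) := \int_\O \r_\d(\vec x'-\vec x)\,\frac{\vec P(\vec x)(\vec x'-\vec x)\cdot(\vec x'-\vec x)}{|\vec x'-\vec x|^2}\,\dd\vec x' ,
\]
via the identity $\mf E_\d(\vec u,\vec P)=\mf D_\d(\vec u)-g_\d$, immediate from \eqref{eq:DEp} once one notes, exactly as in the proof of Lemma \ref{le:Fcoerc}, that the principal value defining $g_\d$ is a genuine integral with $|g_\d(\vec x)|\le n\,|\vec P(\vec x)|$. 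The crucial point about $g_\d$ is that, since $\r_\d$ is radial, the matrix $\int_{\Rn}\r_\d(\vec y)\,|\vec y|^{-2}\,\vec y\otimes\vec y\,\dd\vec y$ is rotation-invariant with trace $\|\r_\d\|_1=n$, hence equals $\vec I$; as $\vec P(\vec x)$ is deviatoric this forces $\int_{\Rn}\r_\d(\vec x'-\vec x)\,|\vec x'-\vec x|^{-2}\,\vec P(\vec x)(\vec x'-\vec x)\cdot(\vec x'-\vec x)\,\dd\vec x'=\tr\vec P(\vec x)=0$, so that $g_\d(\vec x)=-\int_{\Rn\setminus\O}(\cdots)\,\dd\vec x'$ and therefore $|g_\d(\vec x)|\le |\vec P(\vec x)|\int_{\Rn\setminus B(\vec 0,\,\dist(\vec x,\p\O))}\r_\d$.

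\emph{Part (a).} From the last estimate, $g_\d\to0$ in $L^2(\O)$: split $\O$ into $\{\dist(\cdot,\p\O)>r\}$, where that estimate and \eqref{newR2} make the $L^2$-mass vanish as $\d\to0$, and $\{\dist(\cdot,\p\O)\le r\}$, where the crude bound $|g_\d|\le n|\vec P|$ gives $L^2$-mass $\le n\|\vec P\|_{L^2(\{\dist(\cdot,\p\O)\le r\})}$, small for $r$ small since $\Ln(\p\O)=0$. That $\mf D_\d(\vec u)\to\div\vec u$ in $L^2(\O)$ for $\vec u\in H^1(\O,\Rn)$ is the standard convergence of the peridynamic divergence to the classical one (cf.\ \cite{MeDu15}); for $\vec u\in C^\infty(\overline\O,\Rn)$ it is a direct Taylor-expansion computation, using $\int_{B(\vec 0,r)}\r_\d(\vec h)\,|\vec h|^{-2}\,\vec h\otimes\vec h\,\dd\vec h=\vec I+O(\int_{\Rn\setminus B(\vec 0,r)}\r_\d)$ (radial symmetry and $\|\r_\d\|_1=n$) together with \eqref{newR2} to discard $\{|\vec x'-\vec x|>r\}$ and an $r$-collar of $\p\O$, and the general $\vec u\in H^1$ follows by density, since \eqref{eq:DleqS} and Lemma \ref{le:SrW12} give $\|\mf D_\d(\vec u)\|_2\le\sqrt n\,|\vec u|_{\mc S_\d}\le C\|\vec u\|_{H^1}$ with $C$ independent of $\d$. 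Adding the two limits yields $\mf E_\d(\vec u,\vec P)\to\div\vec u$ in $L^2(\O)$.

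\emph{Part (b).} First, $\{\mf E_\d(\vec u_\d,\vec P_\d)\}_\d$ is bounded in $L^2(\O)$: by \eqref{eq:DleqS}, $\|\mf D_\d(\vec u_\d)\|_2\le\sqrt n\,|\vec u_\d|_{\mc S_\d}$ is bounded by hypothesis, while $\|g_\d\|_2\le n\|\vec P_\d\|_2$ is bounded by the weak convergence of $\vec P_\d$. Hence it suffices to identify the limit of $\int_\O\mf E_\d(\vec u_\d,\vec P_\d)\,\phi\,\dd\vec x$ for $\phi$ in the dense set $C^\infty_c(\O)$. For the $g_\d$-contribution, the estimate of the first paragraph gives $g_\d\,\mathbf{1}_{\{\dist(\cdot,\p\O)>r\}}\to0$ strongly in $L^2$ while $g_\d\,\mathbf{1}_{\{\dist(\cdot,\p\O)\le r\}}$ stays bounded in $L^2$, so by weak lower semicontinuity of the $L^2$-norm any weak limit of $g_\d$ vanishes on $\{\dist(\cdot,\p\O)>r\}$ for every $r>0$; thus $g_\d\weakc0$. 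For the decisive $\mf D_\d(\vec u_\d)$-contribution I would fix $\phi$, put $2r_0:=\dist(\supp\phi,\p\O)$, and for $0<\rho\le r_0$ split $\int_\O\mf D_\d(\vec u_\d)\,\phi\,\dd\vec x=\iint_{\O\times\O}\phi(\vec x)\,\r_\d(\vec x'-\vec x)\,\mc D(\vec u_\d)(\vec x,\vec x')\,\dd\vec x'\,\dd\vec x$ (absolutely convergent, by Cauchy--Schwarz against $|\vec u_\d|_{\mc S_\d}$) according to $|\vec x'-\vec x|\gtrless\rho$. On $\{|\vec x'-\vec x|\ge\rho\}$ one bounds $|\mc D(\vec u_\d)|\le\rho^{-1}(|\vec u_\d(\vec x')|+|\vec u_\d(\vec x)|)$, so that piece is $\le C(\phi)\,\rho^{-1}\,\|\vec u_\d\|_1\int_{\Rn\setminus B(\vec 0,\rho)}\r_\d\to0$ as $\d\to0$. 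On $\{|\vec x'-\vec x|<\rho\}$ the factor $\phi(\vec x)$ forces $\vec x\in\supp\phi$, hence $\vec x'\in\O$; substituting $\vec x'=\vec x+\vec h$ and then $\vec y=\vec x+\vec h$ in one of the two resulting terms recasts that piece as $\int_{B(\vec 0,\rho)}|\vec h|^{-2}\,\r_\d(\vec h)\int_\O\bigl(\phi(\vec y-\vec h)-\phi(\vec y)\bigr)\,\vec h\cdot\vec u_\d(\vec y)\,\dd\vec y\,\dd\vec h$, and Taylor expanding $\phi(\vec y-\vec h)-\phi(\vec y)=-\nabla\phi(\vec y)\cdot\vec h+O(|\vec h|^2)$, using again $\int_{B(\vec 0,\rho)}|\vec h|^{-2}\,\r_\d(\vec h)\,\vec h\otimes\vec h\,\dd\vec h=\vec I+O(\int_{\Rn\setminus B(\vec 0,\rho)}\r_\d)$ and the strong convergence $\vec u_\d\to\vec u$ in $L^2$, one finds that, letting first $\d\to0$ and then $\rho\to0$ (the Taylor remainder being $O(\rho\,\|\vec u_\d\|_1)$ uniformly in $\d$), $\int_\O\mf D_\d(\vec u_\d)\,\phi\,\dd\vec x\to-\int_\O\vec u\cdot\nabla\phi\,\dd\vec x=\int_\O\div\vec u\,\phi\,\dd\vec x$ (the last step is an integration by parts, legitimate since $\vec u\in H^1$ and $\phi\in C^\infty_c(\O)$). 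Adding the two contributions gives $\mf E_\d(\vec u_\d,\vec P_\d)\weakc\div\vec u$ in $L^2(\O)$.

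The main obstacle is exactly this last limit passage, where both the operator $\mf D_\d$ and its argument $\vec u_\d$ vary with $\d$ so that part (a) cannot be invoked directly. The device that makes it work is to pass to the increment variable $\vec h$, which decouples the $\r_\d$-averaging from the $\vec x$-integration and lets the strong $L^2$-convergence of $\vec u_\d$ act against the fixed smooth $\phi$; the Taylor remainder is then absorbed by the free small parameter $\rho$, kept distinct from the fixed $r_0$ whose only purpose is to keep $\vec x+\vec h$ inside $\O$, while the compact support of $\phi$ confines the whole computation away from $\p\O$ and thereby removes every boundary term.
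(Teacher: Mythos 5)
Your proof is correct, and for the plastic contribution it is the paper's argument: both decompose $\mf{E}_{\d}(\vec u,\vec P)=\mf{D}_{\d}(\vec u)-g_{\d}$ (the paper calls your $g_{\d}$ the operator $\mf{P}_{\d}(\vec P)$) and annihilate $g_{\d}$ by combining the radial symmetry of $\r_{\d}$ with $\tr\vec P=0$ (the paper via Lemma \ref{le:formulasB}, you via the identity $\int_{\Rn}\r_{\d}(\vec y)|\vec y|^{-2}\vec y\otimes\vec y\,\dd\vec y=\vec I$), property \eqref{newR2}, and the crude bound $|g_{\d}|\leq n|\vec P|$ near $\p\O$; your ``bounded in $L^2$ plus strong convergence to $0$ on $\{\dist(\cdot,\p\O)>r\}$'' argument in part b) is exactly the paper's $L^2_{\loc}$ identification of the weak limit of $\mf{P}_{\d}(\vec P_{\d})$. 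Where you genuinely diverge is the treatment of $\mf{D}_{\d}$: the paper simply imports $\mf{D}_{\d}(\vec u)\to\div\vec u$ and $\mf{D}_{\d}(\vec u_{\d})\weakc\div\vec u$ from \cite[Lemmas 3.1 and 3.6]{MeDu15}, whereas you reprove them — in a) by Taylor expansion for smooth fields plus density, using the $\d$-uniform bound $\|\mf{D}_{\d}(\vec u)\|_2\leq\sqrt n\,|\vec u|_{\mc{S}_{\d}}\leq C\|\vec u\|_{H^1}$ from \eqref{eq:DleqS} and Lemma \ref{le:SrW12}; in b) by testing against $\phi\in C^\infty_c(\O)$, transferring the increment onto $\phi$ via the substitution $\vec x'=\vec x+\vec h$ (a nonlocal integration by parts against a smooth test function), and letting the strong $L^2$ convergence of $\vec u_{\d}$ act on the fixed field $\nabla\phi$, with the cut at $|\vec h|=\rho$ and the uniform $O(\rho)$ Taylor remainder handled correctly and the hypothesis $\sup_{\d}|\vec u_{\d}|_{\mc{S}_{\d}}<\infty$ used precisely where it is needed (Fubini and the $L^2$-bound on $\mf{D}_{\d}(\vec u_{\d})$, which reduces weak convergence to testing on a dense set). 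Your route buys a self-contained, elementary proof independent of the cited nonlocal-to-local lemmas; the paper's buys brevity by outsourcing them. I see no gap.
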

\begin{proof}
We start with \emph{\ref{item:Eddiva})}.
For each $\d>0$ we define the operator $\mf{P}_{\d} : L^2 (\O, \Rnnsd) \to L^2 (\O)$ by
\begin{equation*}
 \mf{P}_{\d} (\vec P) (\vec x) = \int_{\O} \r_{\d} (\vec x'- \vec x) \frac{\vec P (\vec x) (\vec x'- \vec x) \cdot (\vec x'- \vec x)}{|\vec x'- \vec x|^2} \, \dd \vec x' , \qquad \text{a.e. } \vec x \in \O . 
\end{equation*}
Clearly, we have
\begin{equation}\label{eq:EdDdPd}
 \mf{E}_{\d} (\vec u, \vec P) = \mf{D}_{\d} (\vec u) - \mf{P}_{\d} (\vec P) .
\end{equation}
It was proved in \cite[Lemma 3.1]{MeDu15} that $\mf{D}_{\d} (\vec u) \to \div \vec u$ in $L^2 (\O)$ as $\d \to 0$. 
We shall show that $\mf{P}_{\d} (\vec P) \to 0$ in $L^2 (\O)$.
We can express, for a.e.\ $\vec x \in \O$,
\begin{equation}\label{eq:Pdchange}
 \mf{P}_{\d} (\vec P) (\vec x) = \int_{\O - \vec x} \r_{\d} (\tilde{\vec x}) \frac{\vec P (\vec x) \tilde{\vec x} \cdot \tilde{\vec x}}{|\tilde{\vec x}|^2} \, \dd \tilde{\vec x} ,
\end{equation}
so
\begin{equation}\label{eq:Pleqp}
 \left| \mf{P}_{\d} (\vec P) (\vec x) \right| \leq n \left| \vec P (\vec x) \right| .
\end{equation}
Now let $A \ssubset \O$ and let $0 < r < \dist (A, \p \O)$.
Note that $B(\vec 0, r) \subset \O - \vec x$ for any $\vec x \in A$.
By \eqref{eq:Pdchange} and Lemma \ref{le:formulasB}, we have, for a.e.\ $\vec x \in A$,
\[
 \mf{P}_{\d} (\vec P) (\vec x) = \int_{(\O - \vec x) \setminus B (\vec 0, r)} \r_{\d} (\tilde{\vec x}) \frac{\vec P (\vec x) \tilde{\vec x} \cdot \tilde{\vec x}}{|\tilde{\vec x}|^2} \, \dd \tilde{\vec x} ,
\]
so
\[
 \left| \mf{P}_{\d} (\vec P) (\vec x) \right| \leq \int_{\Rn \setminus B (\vec 0, r)} \r_{\d} (\tilde{\vec x}) \, \dd \tilde{\vec x} \left| \vec P (\vec x) \right|
\]
and, consequently,
\begin{equation}\label{eq:intAPd}
 \int_A \mf{P}_{\d} (\vec P) (\vec x)^2 \, \dd \vec x \leq \left( \int_{\Rn \setminus B (\vec 0, r)} \r_{\d} (\tilde{\vec x}) \, \dd \tilde{\vec x}\right)^2 \left\| \vec P \right\|_2^2 .
\end{equation}
Thanks to  \eqref{newR2}, we obtain that $\mf{P}_{\d} (\vec P) \to 0$ in $L^2 (A)$ as $\d \to 0$.
Now, bound \eqref{eq:Pleqp} implies that the family $\{ \mf{P}_{\d} (\vec P)^2\}_{\d > 0}$ is equiintegrable, so in fact $\mf{P}_{\d} (\vec P) \to 0$ in $L^2 (\O)$ as $\d \to 0$.

Now we show \emph{\ref{item:Eddivb})}.
In \cite[Lemma 3.6]{MeDu15} it was proved that $\mf{D}_{\d} (\vec u_{\d}) \weakc \div \vec u$ in $L^2 (\O)$ as $\d \to 0$.
Thanks to \eqref{eq:EdDdPd}, it remains to show that $\mf{P}_{\d} (\vec P_{\d}) \weakc 0$ in $L^2 (\O)$, and for this we will show that $\{ \mf{P}_{\d} (\vec P_{\d})\}_{\d>0}$ is bounded in $L^2 (\O)$ and that $\mf{P}_{\d} (\vec P_{\d}) \to 0$ in $L^2_{\loc} (\O)$.

Let $\d>0$.
Thanks to \eqref{eq:Pleqp} we have $\left| \mf{P}_{\d} (\vec P_{\d}) \right| \leq n \left| \vec P_{\d} \right|$, so $\{ \mf{P}_{\d} (\vec P_{\d})\}_{\d>0}$ is bounded in $L^2 (\O)$.
Now let $A \ssubset \O$ and let $0 < r < \dist (A, \p \O)$.
By \eqref{eq:intAPd} we have that
\[
 \int_A \mf{P}_{\d} (\vec P_{\d}) (\vec x)^2 \, \dd \vec x \leq \left( \int_{\Rn \setminus B (\vec 0, r)} \r_{\d} (\tilde{\vec x}) \, \dd \tilde{\vec x}\right)^2 \left\| \vec P_{\d} \right\|_2^2 .
\]
Using \eqref{newR2} and the fact that $\{ \vec P_{\d} \}_{\d>0}$ is bounded in $L^2 (\O, \Rnn)$, we conclude that $\mf{P}_{\d} (\vec P_{\d}) \to 0$ in $L^2 (A)$ as $\d \to 0$, which finishes the proof.
\end{proof}

As a preparation for the $\G$-limit $F_{\d} \to F$ as $\d \to 0$, we start with the pointwise limit.

\begin{proposition}[Pointwise convergence of $F_{\d}$]\label{prop:limitFdup}
%Let $\O$ be a Lipschitz domain.
Let $\vec u \in H^1 (\O, \Rn)$ and $\vec P \in L^2 (\O, \Rnnsd)$.
Then
\[
 \lim_{\d \to 0} F_{\d} (\vec u, \vec P) = F_0 (\vec u, \vec P) .
\]
\end{proposition}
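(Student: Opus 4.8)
The plan is to pass to the limit in the four terms of $F_\d$ separately. The hardening term $\g\int_\O|\vec P|^2$ and the load term $-\int_\O\vec b\cdot\vec u$ are independent of $\d$, so nothing is to be done there. For the first term, \cite[Lemma 3.1]{MeDu15} (already used in the proof of Lemma \ref{le:Eddiv}) gives $\mf D_\d(\vec u)\to\div\vec u$ in $L^2(\O)$, hence $\b\|\mf D_\d(\vec u)\|_2^2\to\b\|\div\vec u\|_2^2$. For the $\a$-term I would first expand the square and use the definition of $\mf E_\d$ together with Fubini (legitimate since $\r_\d\,\dd\vec x'$ is a finite measure, so $\mc{E}(\vec u,\vec P)(\vec x,\cdot)\in L^1(\r_\d\,\dd\vec x')$ for a.e.\ $\vec x$) to write
\[
 \int_\O\int_\O\r_\d(\xx)\Big(\mc{E}(\vec u,\vec P)(\vec x,\vec x')-\tfrac1n\mf{E}_\d(\vec u,\vec P)(\vec x)\Big)^2\dd\vec x'\,\dd\vec x = |(\vec u,\vec P)|_{\mc{T}_\d}^2-\tfrac2n\|\mf{E}_\d(\vec u,\vec P)\|_2^2+\tfrac1{n^2}\int_\O\mf{E}_\d(\vec u,\vec P)(\vec x)^2\,k_\d(\vec x)\,\dd\vec x ,
\]
with $k_\d(\vec x):=\int_\O\r_\d(\xx)\,\dd\vec x'\le n$. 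Since $\|\r_\d\|_1=n$, assumption \eqref{newR2} forces $k_\d(\vec x)\to n$ for a.e.\ $\vec x\in\O$ (any interior point eventually has $B(\vec 0,r)\subset\O-\vec x$). Together with Lemma \ref{le:Eddiv}.a (so $\mf E_\d(\vec u,\vec P)^2\to(\div\vec u)^2$ in $L^1$) and dominated convergence, the last two terms tend to $-\frac2n\|\div\vec u\|_2^2+\frac1n\|\div\vec u\|_2^2=-\frac1n\|\div\vec u\|_2^2$. Thus everything reduces to computing $\lim_{\d\to0}|(\vec u,\vec P)|_{\mc{T}_\d}^2$.

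To do this I would insert $\mc{E}(\vec u,\vec P)(\vec x,\vec x')=\mc{D}(\vec u)(\vec x,\vec x')-\vec P(\vec x)\vec z\cdot\vec z$ with $\vec z=(\xx)/|\xx|$ and expand:
\[
 |(\vec u,\vec P)|_{\mc{T}_\d}^2 = |\vec u|_{\mc{S}_\d}^2-2B_\d(\vec u,\vec P)+C_\d(\vec P),\qquad B_\d(\vec u,\vec P):=\int_\O\int_\O\r_\d(\xx)\,\mc{D}(\vec u)(\vec x,\vec x')\,(\vec P(\vec x)\vec z\cdot\vec z)\,\dd\vec x'\,\dd\vec x ,
\]
with $C_\d(\vec P):=\int_\O\int_\O\r_\d(\xx)(\vec P(\vec x)\vec z\cdot\vec z)^2\,\dd\vec x'\,\dd\vec x$. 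The term $|\vec u|_{\mc{S}_\d}^2\to n\int_\O\dashint_{\Sn}(\nabla\vec u(\vec x)\vec z\cdot\vec z)^2\,\dd\Hn(\vec z)\,\dd\vec x$ is the purely elastic case ($\vec P=\vec 0$) of the statement, contained in \cite{MeDu15} (combine the $\G$-limit of $E_\d$ there with $\mf D_\d(\vec u)\to\div\vec u$ and $k_\d\to n$; alternatively it is obtained by the same smooth-approximation argument used below for $B_\d$). For $C_\d$, for a.e.\ $\vec x\in\O$ I would change variables $\vec y=\xx$, use $B(\vec 0,r_0)\subset\O-\vec x$ for small $r_0$, pass to polar coordinates and invoke \eqref{newR2} to get $\int_\O\r_\d(\xx)(\vec P(\vec x)\vec z\cdot\vec z)^2\,\dd\vec x'\to n\dashint_{\Sn}(\vec P(\vec x)\vec z\cdot\vec z)^2\,\dd\Hn(\vec z)$; this quantity is bounded by $n|\vec P(\vec x)|^2\in L^1(\O)$, so dominated convergence gives $C_\d(\vec P)\to n\int_\O\dashint_{\Sn}(\vec P(\vec x)\vec z\cdot\vec z)^2\,\dd\Hn(\vec z)\,\dd\vec x$.

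The hard part is the mixed term $B_\d(\vec u,\vec P)$. Here Cauchy--Schwarz and Lemma \ref{le:SrW12} give the $\d$-uniform bound $|B_\d(\vec u,\vec P)|\le\sqrt n\,\|\vec P\|_2\,|\vec u|_{\mc{S}_\d}\le C\|\vec P\|_2\|\vec u\|_{H^1}$, so $\vec u\mapsto B_\d(\vec u,\vec P)$ is equi-Lipschitz (in $\d$) on $H^1(\O,\Rn)$, and the candidate limit $\vec u\mapsto n\int_\O\dashint_{\Sn}(\nabla\vec u\,\vec z\cdot\vec z)(\vec P\,\vec z\cdot\vec z)$ is continuous on $H^1$ as well; hence by density it suffices to treat $\vec u\in C^\infty(\overline\O,\Rn)$. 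For such $\vec u$, Taylor's theorem gives $\mc{D}(\vec u)(\vec x,\vec x')=\nabla\vec u(\vec x)\vec z\cdot\vec z+\mathrm{err}_\d(\vec x,\vec x')$ with $|\mathrm{err}_\d(\vec x,\vec x')|\le C_{\vec u}\,|\xx|$; since $\O-\vec x$ is bounded, $\int_\O\r_\d(\xx)|\xx|\,\dd\vec x'\le rn+\mathrm{diam}(\O)\int_{\Rn\setminus B(\vec 0,r)}\r_\d(\vec y)\,\dd\vec y\to0$ for every $r>0$, uniformly in $\vec x$, so the error contributes $O\big(\|\vec P\|_1\sup_{\vec x}\int_\O\r_\d(\xx)|\xx|\,\dd\vec x'\big)=o(1)$, while the main part converges a.e.\ to $n\dashint_{\Sn}(\nabla\vec u(\vec x)\vec z\cdot\vec z)(\vec P(\vec x)\vec z\cdot\vec z)\,\dd\Hn(\vec z)$ by the polar-coordinate computation of the previous paragraph and is dominated by $n\|\nabla\vec u\|_\infty|\vec P|\in L^1(\O)$; dominated convergence then closes the smooth case, and the $3\e$-argument built on the equi-Lipschitz bound extends it to all $\vec u\in H^1(\O,\Rn)$.

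Finally I would collect the pieces: the $\a$-term of $F_\d$ converges to $\a\big(n\int_\O\dashint_{\Sn}((\nabla\vec u-\vec P)\vec z\cdot\vec z)^2\,\dd\Hn(\vec z)\,\dd\vec x-\tfrac1n\|\div\vec u\|_2^2\big)$, and since $\vec P$ is deviatoric one has $\dashint_{\Sn}(\nabla\vec u-\vec P)\vec z\cdot\vec z\,\dd\Hn(\vec z)=\tfrac1n\div\vec u$, so this equals $\a n\int_\O\dashint_{\Sn}\big((\nabla\vec u-\vec P)\vec z\cdot\vec z-\tfrac1n\div\vec u\big)^2\,\dd\Hn(\vec z)\,\dd\vec x$, which is precisely the $\a$-term of $F_0$. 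Adding the four limits yields $F_\d(\vec u,\vec P)\to F_0(\vec u,\vec P)$, as claimed.
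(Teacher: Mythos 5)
Your proposal is correct, and it reaches the limit by a genuinely different decomposition than the paper, although the core ingredients coincide (the convergence $\mf{D}_\d(\vec u)\to\div\vec u$ and Lemma \ref{le:Eddiv}, the radial averaging of Lemma \ref{le:formulasB}, assumption \eqref{newR2}, Taylor estimates with dominated convergence, and the $\d$-uniform bound of Lemma \ref{le:SrW12} to run a density argument). The paper first replaces $\mf{E}_\d(\vec u,\vec P)$ by $\div\vec u$ \emph{inside} the square via Cauchy--Schwarz and Lemma \ref{le:Eddiv} (its \eqref{eq:Ed-div}), then, for $C^1\times C$ data, expands $(\mc{E}-\tfrac1n\div\vec u)^2$ and treats the three resulting integrals separately (\eqref{eq:Edivfirst}--\eqref{eq:Edivthird}), finally extending to $H^1\times L^2$ by approximating \emph{both} $\vec u$ and $\vec P$ and using the uniform boundedness of the operator $T_\d$ (Lemmas \ref{le:linearbounded} and \ref{le:SrW12}). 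You instead expand algebraically, reducing everything to $|(\vec u,\vec P)|_{\mc{T}_\d}^2$ plus terms controlled by $\|\mf{E}_\d(\vec u,\vec P)\|_2$ and $k_\d(\vec x)=\int_\O\r_\d(\xx)\,\dd\vec x'\to n$, and then split the seminorm into elastic, mixed and plastic pieces; only the pieces involving $\vec u$ need smoothing, with $\vec P$ kept as a general $L^2$ function throughout, the equi-Lipschitz bound $|B_\d(\vec u,\vec P)|\le\sqrt n\,\|\vec P\|_2\,|\vec u|_{\mc{S}_\d}\le C\|\vec P\|_2\|\vec u\|_{H^1}$ replacing the paper's $T_\d$-based estimate. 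This buys a somewhat leaner density step and makes transparent where the deviatoric character of $\vec P$ enters (through $\dashint_{\Sn}(\nabla\vec u-\vec P)\vec z\cdot\vec z\,\dd\Hn(\vec z)=\tfrac1n\div\vec u$), at the price of invoking the purely elastic limit $|\vec u|^2_{\mc{S}_\d}\to n\int_\O\dashint_{\Sn}(\nabla\vec u\,\vec z\cdot\vec z)^2\,\dd\Hn(\vec z)\,\dd\vec x$ from \cite{MeDu15} (or, as you note, reproving it by the same smooth-plus-density scheme); the paper's route is more self-contained and its intermediate computations (\eqref{eq:Edivfirst}--\eqref{eq:Edivsecond}) are reused later in Lemma \ref{le:limitFdupNEW} and in Appendix \ref{sec:appendix}. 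Two small technical remarks: the justification that the principal value in $\mf{E}_\d$ is an absolutely convergent integral (so your Fubini expansion is legitimate) is exactly your finiteness of $|(\vec u,\vec P)|_{\mc{T}_\d}$, which for $H^1\times L^2$ data follows from Lemma \ref{le:SrW12} and \eqref{eq:uSupT}; and on a merely Lipschitz (possibly non-convex) $\bar\O$ the Taylor remainder for smooth $\vec u$ should be stated as $|\xx|$ times a modulus of continuity (as in \eqref{eq:diffgrad}) rather than $C_{\vec u}|\xx|$, which changes nothing in your estimate of the error term.
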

\begin{proof}
Obviously, we only have to show that
\begin{equation}\label{eq:limDddiv}
 \lim_{\d \to 0} \int_{\O} \mf{D}_{\d} (\vec u) (\vec x)^2 \, \dd \vec x = \int_{\O} \div \vec u (\vec x) ^2 \, \dd \vec x
\end{equation}
and
\begin{equation}\label{eq:limitE-Ed}
\begin{split}
 & \lim_{\d \to 0} \int_{\O} \int_{\O} \r_\d (\vec x' {-} \vec x) \left( \mc{E} (\vec u, \vec P) (\vec x, \vec x') - \frac{1}{n} \mf{E}_{\d} (\vec u, \vec P) (\vec x) \right)^2 \dd \vec x' \, \dd \vec x \\
 = & n \int_{\O} \dashint_{\Sn} \left( (\nabla \vec u (\vec x) - \vec P (\vec x)) \vec z \cdot \vec z - \frac{1}{n} \div \vec u (\vec x) \right)^2 \, \dd \Hn (\vec z) \, \dd \vec x .
\end{split}
\end{equation}
As mentioned in Lemma \ref{le:Eddiv}, the limit $\mf{D}_{\d} (\vec u) \to \div \vec u$ in $L^2 (\O)$ as $ \d \to 0$ was shown in \cite[Lemma 3.1]{MeDu15}, so we have equality \eqref{eq:limDddiv}.

We divide the proof of \eqref{eq:limitE-Ed} in two steps, according to the regularity of $\vec u$ and $\vec P$.

\smallskip

\emph{Step 1.} We assume additionally that $\vec u \in C^1 (\bar{\O}, \Rn)$ and $\vec P \in C (\bar{\O}, \Rnnsd)$.

Since $\vec u \in C^1 (\bar{\O}, \Rn)$, there exists an increasing bounded function $\s : [0,\infty) \to [0, \infty)$ with
\begin{equation}\label{eq:lims0}
 \lim_{t \to 0} \s (t) = 0
\end{equation}
such that for all $\vec x, \vec x' \in \O$,
\[
 \left| \nabla \vec u (\vec x') - \nabla \vec u (\vec x) \right| \leq \s (|\vec x' {-} \vec x|) .
\]
As $\O$ is a Lipschitz domain, a standard result shows that there exists $c\geq 1$ such that for all $\vec x, \vec x' \in \O$, we have
\begin{equation}\label{eq:differenceq}
 \left| \vec u (\vec x') - \vec u (\vec x) \right| \leq c \left\| \nabla \vec u \right\|_{\infty} \left| \vec x' {-} \vec x \right|
\end{equation}
and
\[
 \left| \vec u (\vec x') - \vec u (\vec x) - \nabla \vec u (\vec x) (\vec x' {-} \vec x) \right| \leq |\vec x' {-} \vec x| c \, \s (|\vec x' {-} \vec x|) .
\]
For simplicity of notation, we relabel $c \, \s$ as $\s$ and, hence, assume that for all $\vec x, \vec x' \in \O$,
\begin{equation}\label{eq:diffgrad}
 \left| \vec u (\vec x') - \vec u (\vec x) - \nabla \vec u (\vec x) (\vec x' {-} \vec x) \right| \leq |\vec x' {-} \vec x| \s (|\vec x' {-} \vec x|) .
\end{equation}
Note that \eqref{eq:differenceq} implies that
\begin{equation}\label{eq:boundEgrad}
 \left| \mc{E} (\vec u, \vec P) (\vec x, \vec x') \right| \leq c \left\| \nabla \vec u \right\|_{\infty} + \left\| \vec P \right\|_{\infty} .
\end{equation}

Now we show that
\begin{equation}\label{eq:Ed-div}
 \lim_{\d \to 0} \int_{\O} \int_{\O} \r_{\d} (\vec x' {-} \vec x) \left[ \left( \mc{E} (\vec u, \vec P) (\vec x, \vec x') - \frac{1}{n} \mf{E}_{\d} (\vec u, \vec P) (\vec x) \right)^2 - \left( \mc{E} (\vec u, \vec P) (\vec x, \vec x') - \frac{1}{n} \div \vec u (\vec x) \right)^2 \right] \dd \vec x' \, \dd \vec x = 0 .
\end{equation}
We have
\begin{align*}
 & \left| \int_{\O} \int_{\O} \r_{\d} (\vec x' {-} \vec x) \left[ \left( \mc{E} (\vec u, \vec P) (\vec x, \vec x') - \frac{1}{n} \mf{E}_{\d} (\vec u, \vec P) (\vec x) \right)^2 - \left( \mc{E} (\vec u, \vec P) (\vec x, \vec x') - \frac{1}{n} \div \vec u (\vec x) \right)^2 \right] \dd \vec x' \, \dd \vec x \right| \\
 = & \frac{1}{n^2} \left|  \int_{\O} \int_{\O} \r_{\d} (\vec x' {-} \vec x) \left( \div \vec u (\vec x) - \mf{E}_{\d} (\vec u, \vec P) (\vec x) \right) \left( 2 n^2 \mc{E} (\vec u, \vec P) (\vec x, \vec x') - \mf{E}_{\d} (\vec u, \vec P) (\vec x) - \div \vec u (\vec x) \right) \dd \vec x' \, \dd \vec x \right| \\
 \leq & \frac{1}{n^2} \left( \int_{\O} \int_{\O} \r_{\d} (\vec x' {-} \vec x) \left( \div \vec u (\vec x) - \mf{E}_{\d} (\vec u, \vec P) (\vec x) \right)^2 \dd \vec x' \, \dd \vec x \right)^{\frac{1}{2}} \\
 & \quad \times \left( \int_{\O} \int_{\O} \r_{\d} (\vec x' {-} \vec x) \left( 2 n^2 \mc{E} (\vec u, \vec P) (\vec x, \vec x') - \mf{E}_{\d} (\vec u, \vec P) (\vec x) - \div \vec u (\vec x) \right)^2 \dd \vec x' \, \dd \vec x \right)^{\frac{1}{2}} .
\end{align*}
Thanks to \eqref{eq:ErleqE} and \eqref{eq:boundEgrad}, the second term of the right-hand side is bounded by a constant times
\[
 \left\| \nabla \vec u \right\|_{\infty} + \left\| \vec P \right\|_{\infty} ,
\]
while the first term tends to zero as $\d \to 0$ thanks to Lemma \ref{le:Eddiv}.
Thus, limit \eqref{eq:Ed-div} is proved.

Now we show
\begin{equation}\label{eq:divudivu}
\begin{split}
 & \lim_{\d \to 0} \int_{\O} \int_{\O} \r_{\d} (\vec x' {-} \vec x) \left[ \left( \mc{E} (\vec u, \vec P) (\vec x, \vec x') - \frac{1}{n} \div \vec u (\vec x) \right)^2 \right] \dd \vec x' \, \dd \vec x \\
 = & n \int_{\O} \dashint_{\Sn} \left( (\nabla \vec u (\vec x) - \vec P (\vec x)) \vec z \cdot \vec z - \frac{1}{n} \div \vec u (\vec x) \right)^2 \, \dd \Hn (\vec z) \, \dd \vec x .
\end{split}
\end{equation}
We express
\begin{equation}\label{eq:limFE2}
\begin{split}
 &\int_{\O} \int_{\O} \r_{\d} (\vec x' {-} \vec x) \mc{E} (\vec u, \vec P) (\vec x, \vec x')^2 \, \dd \vec x' \, \dd \vec x \\
 = & \int_{\O} \int_{\O} \r_{\d} (\vec x' {-} \vec x) \left( \frac{(\nabla \vec u (\vec x) - \vec P (\vec x)) (\vec x' {-} \vec x) \cdot (\vec x' {-} \vec x)}{|\vec x' {-} \vec x|^2} \right)^2 \, \dd \vec x' \, \dd \vec x + \int_{\O} \int_{\O} \r_{\d} (\vec x' {-} \vec x) C (\vec x, \vec x') \, \dd \vec x' \, \dd \vec x
\end{split}
\end{equation}
with
\begin{align*}
 C (\vec x, \vec x') = & \frac{\left( \vec u (\vec x') - \vec u (\vec x) - \nabla \vec u (\vec x) (\vec x' {-} \vec x) \right) \cdot (\vec x' {-} \vec x)}{|\vec x' {-} \vec x|^2} \\
 &  \times \left( \frac{\left( \vec u (\vec x') - \vec u (\vec x) - \nabla \vec u (\vec x) (\vec x' {-} \vec x) \right) \cdot (\vec x' {-} \vec x)}{|\vec x' {-} \vec x|^2} + 2 \frac{(\nabla \vec u (\vec x) - \vec P (\vec x) )(\vec x' {-} \vec x) \cdot (\vec x' {-} \vec x)}{|\vec x' {-} \vec x|^2}\right) .
\end{align*}
We have, thanks to \eqref{eq:diffgrad},
\[
 |C (\vec x, \vec x')| \leq \s (|\vec x' {-} \vec x|) \left( \| \s \|_{\infty} + 2 \| \nabla \vec u \|_{\infty} + 2 \| \vec P \|_{\infty} \right) ,
\]
so for a.e.\ $\vec x \in \O$,
\begin{equation}\label{eq:C1}
 \left| \int_{\O} \r_{\d} (\vec x' {-} \vec x) C (\vec x, \vec x') \, \dd \vec x' \right| \leq  \left( \| \s \|_{\infty} + 2 \| \nabla \vec u \|_{\infty} + 2 \| \vec P \|_{\infty} \right) \int_{\O - \vec x} \r_{\d} (\tilde{\vec x}) \, \s (|\tilde{\vec x}|) \, \dd \tilde{\vec x} 
\end{equation}
and, for any $r > 0$,
\begin{equation}\label{eq:C2}
 \int_{\O - \vec x} \r_{\d} (\tilde{\vec x}) \, \s (|\tilde{\vec x}|) \, \dd \tilde{\vec x} \, \dd \vec x \leq  \int_{\Rn} \r_{\d} (\tilde{\vec x}) \, \s (|\tilde{\vec x}|) \, \dd \tilde{\vec x} \leq n \s (r) + \| \s \|_{\infty} \int_{\Rn \setminus B (\vec 0, r)} \r_{\d} (\tilde{\vec x}) \, \dd \tilde{\vec x} .
\end{equation}
Bounds \eqref{eq:C1} and \eqref{eq:C2}, as well as properties \eqref{newR2}
 and \eqref{eq:lims0}, imply that
\begin{equation}\label{eq:limF0}
 \lim_{\d \to 0} \int_{\O} \int_{\O} \r_{\d} (\vec x' {-} \vec x) C (\vec x, \vec x') \, \dd \vec x' \, \dd \vec x = 0 .
\end{equation}
Now let $A \ssubset \O$ be measurable and $0 < r < \dist (A, \p \O)$.
Then, for any $\vec x \in A$,
\begin{equation}\label{eq:limF1}
\begin{split}
 & \int_{\O} \r_{\d} (\vec x' {-} \vec x) \left( \frac{(\nabla \vec u (\vec x) - \vec P (\vec x)) (\vec x' {-} \vec x) \cdot (\vec x' {-} \vec x)}{|\vec x' {-} \vec x|^2} \right)^2 \, \dd \vec x' \\
 & = \left[ \int_{B (\vec 0, r)} + \int_{(\O - \vec x) \setminus B (\vec 0, r)} \right] \r_{\d} (\tilde{\vec x}) \left( \frac{(\nabla \vec u (\vec x) - \vec P (\vec x)) \tilde{\vec x} \cdot \tilde{\vec x}}{|\tilde{\vec x}|^2} \right)^2 \, \dd \tilde{\vec x} ,
\end{split}
\end{equation}
with, thanks to Lemma \ref{le:formulasB},
\begin{equation}\label{eq:limF2}
 \int_{B (\vec 0, r)} \r_{\d} (\tilde{\vec x}) \left( \frac{(\nabla \vec u (\vec x) - \vec P (\vec x)) \tilde{\vec x} \cdot \tilde{\vec x}}{|\tilde{\vec x}|^2} \right)^2 \dd \tilde{\vec x} = \int_{B (\vec 0, r)} \r_{\d} (\tilde{\vec x}) \, \dd \tilde{\vec x} \ \dashint_{\Sn} \left( (\nabla \vec u (\vec x) - \vec P (\vec x)) \vec z \cdot \vec z \right)^2  \dd \Hn (\vec z)
\end{equation}
and
\begin{equation}\label{eq:limF3}
 \left| \int_{(\O - \vec x) \setminus B (\vec 0, r)} \r_{\d} (\tilde{\vec x}) \left( \frac{(\nabla \vec u (\vec x) - \vec P (\vec x)) \tilde{\vec x} \cdot \tilde{\vec x}}{|\tilde{\vec x}|^2} \right)^2 \, \dd \tilde{\vec x} \right| \leq 2 \left( \| \nabla \vec u \|_{\infty}^2 + \| \vec P \|_{\infty}^2 \right) \int_{\Rn \setminus B (\vec 0, r)} \r_{\d} (\tilde{\vec x} )\, \dd \tilde{\vec x} .
\end{equation}
Note that the bound \eqref{eq:boundEgrad} implies that the family of functions
\[
 \vec x \mapsto \int_{\O} \r_{\d} (\vec x' {-} \vec x) \mc{E} (\vec u, \vec P) (\vec x, \vec x')^2 \, \dd \vec x'
\]
is equiintegrable in $\O$ for $\d>0$.
Hence, property \eqref{newR2}, together with bound \eqref{eq:limF3} and equalities \eqref{eq:limF1}--\eqref{eq:limF2} show that
\begin{align*}
 & \lim_{\d \to 0} \int_{\O} \int_{\O} \r_{\d} (\vec x' {-} \vec x) \left( \frac{(\nabla \vec u (\vec x) - \vec P (\vec x)) (\vec x' {-} \vec x) \cdot (\vec x' {-} \vec x)}{|\vec x' {-} \vec x|^2} \right)^2 \dd \vec x' \, \dd \vec x \\
 = &  n \int_{\O} \dashint_{\Sn} \left( (\nabla \vec u (\vec x) - \vec P (\vec x)) \vec z \cdot \vec z \right)^2 \dd \Hn (\vec z) \, \dd \vec x ,
\end{align*}
which, together with \eqref{eq:limFE2} and \eqref{eq:limF0}, implies
\begin{equation}\label{eq:Edivfirst}
 \lim_{\d \to 0} \int_{\O} \int_{\O} \r_{\d} (\vec x' {-} \vec x) \mc{E} (\vec u, \vec P) (\vec x, \vec x')^2 \, \dd \vec x' \, \dd \vec x = n \int_{\O} \dashint_{\Sn} \left( (\nabla \vec u (\vec x) - \vec P (\vec x)) \vec z \cdot \vec z \right)^2 \dd \Hn (\vec z) \, \dd \vec x.
\end{equation}

Now we express
\begin{equation}\label{eq:limFE2second}
\begin{split}
 & \int_{\O} \int_{\O} \r_{\d} (\vec x' {-} \vec x) \mc{E} (\vec u, \vec P) (\vec x, \vec x') \div \vec u (\vec x) \dd \vec x' \, \dd \vec x \\
 = & \int_{\O} \int_{\O} \r_{\d} (\vec x' {-} \vec x) \frac{(\nabla \vec u (\vec x) - \vec P (\vec x)) (\vec x' {-} \vec x) \cdot (\vec x' {-} \vec x)}{|\vec x' {-} \vec x|^2} \div \vec u (\vec x) \, \dd \vec x' \, \dd \vec x + \int_{\O} \int_{\O} \r_{\d} (\vec x' {-} \vec x) B (\vec x, \vec x') \, \dd \vec x' \, \dd \vec x
 \end{split}
\end{equation}
with
\begin{equation*}
 B (\vec x, \vec x') = \frac{\left( \vec u (\vec x') - \vec u (\vec x) - \nabla \vec u (\vec x) (\vec x' {-} \vec x) \right) \cdot (\vec x' {-} \vec x)}{|\vec x' {-} \vec x|^2} \div \vec u (\vec x) .
\end{equation*}
We have, thanks to \eqref{eq:diffgrad},
\[
 |B (\vec x, \vec x')| \leq \s (|\vec x' {-} \vec x|) \| \div \vec u \|_{\infty} .
\]
An analogous reasoning to that of \eqref{eq:C1}, \eqref{eq:C2} and \eqref {eq:limF0} leads to
\begin{equation}\label{eq:limF0second}
 \lim_{\d \to 0} \int_{\O} \int_{\O} \r_{\d} (\vec x' {-} \vec x) B (\vec x, \vec x') \, \dd \vec x' \, \dd \vec x = 0 .
\end{equation}
Now let $A \ssubset \O$ be measurable and $0 < r < \dist (A, \p \O)$.
Then, for any $\vec x \in A$,
\begin{equation}\label{eq:limF1second}
\begin{split}
 & \int_{\O} \r_{\d} (\vec x' {-} \vec x) \frac{(\nabla \vec u (\vec x) - \vec P (\vec x)) (\vec x' {-} \vec x) \cdot (\vec x' {-} \vec x)}{|\vec x' {-} \vec x|^2} \div \vec u (\vec x) \, \dd \vec x' \\
 & = \left[ \int_{B (\vec 0, r)} + \int_{(\O - \vec x) \setminus B (\vec 0, r)} \right] \r_{\d} (\tilde{\vec x}) \frac{(\nabla \vec u (\vec x) - \vec P (\vec x)) \tilde{\vec x} \cdot \tilde{\vec x}}{|\tilde{\vec x}|^2} \div \vec u (\vec x) \, \dd \tilde{\vec x} ,
\end{split}
\end{equation}
with, thanks to Lemma \ref{le:formulasB},
\begin{equation}\label{eq:limF2second}
 \int_{B (\vec 0, r)} \r_{\d} (\tilde{\vec x}) \frac{(\nabla \vec u (\vec x) - \vec P (\vec x)) \tilde{\vec x} \cdot \tilde{\vec x}}{|\tilde{\vec x}|^2} \div \vec u (\vec x) \, \dd \tilde{\vec x} = \int_{B (\vec 0, r)} \r_{\d} (\tilde{\vec x}) \, \dd \tilde{\vec x} \ \dashint_{\Sn} (\nabla \vec u (\vec x) - \vec P (\vec x)) \vec z \cdot \vec z \div \vec u (\vec x) \, \dd \Hn (\vec z)
\end{equation}
and
\begin{equation}\label{eq:limF3second}
 \left| \int_{(\O - \vec x) \setminus B (\vec 0, r)} \r_{\d} (\tilde{\vec x}) \frac{(\nabla \vec u (\vec x) - \vec P (\vec x)) \tilde{\vec x} \cdot \tilde{\vec x}}{|\tilde{\vec x}|^2} \div \vec u (\vec x) \, \dd \tilde{\vec x} \right| \leq \| \div \vec u \|_{\infty} \left( \| \nabla \vec u \|_{\infty} + \| \vec P \|_{\infty} \right) \int_{\Rn \setminus B (\vec 0, r)} \r_{\d} (\tilde{\vec x} )\, \dd \tilde{\vec x} .
\end{equation}
Note that the bound \eqref{eq:boundEgrad} implies that the family of functions
\[
 \vec x \mapsto \int_{\O} \r_{\d} (\vec x' {-} \vec x) \mc{E} (\vec u, \vec P) (\vec x, \vec x') \div \vec u (\vec x) \, \dd \vec x'
\]
is equiintegrable in $\O$ for $\d>0$.
Hence, property \eqref{newR2}, together with bound \eqref{eq:limF3second} and equalities \eqref{eq:limF1second}--\eqref{eq:limF2second} show that
\begin{align*}
 & \lim_{\d \to 0} \int_{\O} \int_{\O} \r_{\d} (\vec x' {-} \vec x) \frac{(\nabla \vec u (\vec x) - \vec P (\vec x)) (\vec x' {-} \vec x) \cdot (\vec x' {-} \vec x)}{|\vec x' {-} \vec x|^2} \div \vec u (\vec x) \, \dd \vec x' \, \dd \vec x \\
 = &  n \int_{\O} \dashint_{\Sn} (\nabla \vec u (\vec x) - \vec P (\vec x)) \vec z \cdot \vec z \, \dd \Hn (\vec z) \div \vec u (\vec x) \, \dd \vec x ,
\end{align*}
which, together with \eqref{eq:limFE2second} and \eqref{eq:limF0second}, implies
\begin{equation}\label{eq:Edivsecond}
 \lim_{\d \to 0} \int_{\O} \int_{\O} \r_{\d} (\vec x' {-} \vec x) \mc{E} (\vec u, \vec P) (\vec x, \vec x') \div \vec u (\vec x) \, \dd \vec x' \, \dd \vec x = n \int_{\O} \dashint_{\Sn} (\nabla \vec u (\vec x) - \vec P (\vec x)) \vec z \cdot \vec z \, \dd \Hn (\vec z) \div \vec u (\vec x) \, \dd \vec x.
\end{equation}

Now let $A \ssubset \O$ be measurable and $0 < r < \dist (A, \p \O)$.
Then, for any $\vec x \in A$,
\begin{equation}\label{eq:limF1third}
 \int_{\O} \r_{\d} (\vec x' {-} \vec x) \div \vec u (\vec x)^2 \, \dd \vec x' = \left[ \int_{B (\vec 0, r)} + \int_{(\O - \vec x) \setminus B (\vec 0, r)} \right] \r_{\d} (\tilde{\vec x}) \div \vec u (\vec x)^2 \, \dd \tilde{\vec x} ,
\end{equation}
with
\begin{equation}\label{eq:limF3secondagain}
 \left| \int_{(\O - \vec x) \setminus B (\vec 0, r)} \r_{\d} (\tilde{\vec x}) \div \vec u (\vec x)^2 \, \dd \tilde{\vec x} \right| \leq \| \div \vec u \|_{\infty}^2 \int_{\Rn \setminus B (\vec 0, r)} \r_{\d} (\tilde{\vec x} )\, \dd \tilde{\vec x} .
\end{equation}
Note that the bound
\[
 \int_{\O} \r_{\d} (\vec x' {-} \vec x) \div \vec u (\vec x)^2 \, \dd \vec x' \leq n \| \div \vec u \|_{\infty}^2
\]
implies that the family of functions
\[
 \vec x \mapsto \int_{\O} \r_{\d} (\vec x' {-} \vec x) \div \vec u (\vec x)^2 \, \dd \vec x'
\]
is equiintegrable in $\O$ for $\d>0$.
Hence, property \eqref{newR2}, together with bound \eqref{eq:limF3secondagain} and equality \eqref{eq:limF1third} show that
\begin{equation}\label{eq:Edivthird}
 \lim_{\d \to 0} \int_{\O} \int_{\O} \r_{\d} (\vec x' {-} \vec x) \div \vec u (\vec x)^2 \, \dd \vec x' \, \dd \vec x = n \int_{\O} \div \vec u (\vec x)^2 \, \dd \vec x ,
\end{equation}

Equalities \eqref{eq:Edivfirst}, \eqref{eq:Edivsecond} and \eqref{eq:Edivthird} show \eqref{eq:divudivu}, while \eqref{eq:divudivu} and \eqref{eq:Ed-div} yield \eqref{eq:limitE-Ed} and complete the proof of this step.

\smallskip

\emph{Step 2.} Now we just assume $\vec u \in H^1 (\O, \Rn)$ and $\vec P \in L^2 (\O, \Rnnsd)$, as in the statement.
Let $\e>0$ and let $\bar{\vec u} \in C^1 (\bar{\O}, \Rn)$ and $\bar{\vec P} \in C (\bar{\O}, \Rnnsd)$ be such that
\[
 \left\| \bar{\vec u} - \vec u \right\|_{H^1} \leq \e \quad \text{and} \quad \left\| \bar{\vec P} - \vec P \right\|_2 \leq \e .
\]
This is possible since $\Rnnsd$ is a subspace of $\Rnn$.

Now, consider Lemma \ref{le:linearbounded} and the operator defined therein, which we call $T_{\d}$ in order to underline the dependence on $\d$.
By Lemmas \ref{le:Tr}, \ref{le:linearbounded} and \ref{le:SrW12} there exists $C>0$ independent of $\d$ such that $\| T_{\d} (\vec v, \vec Q) \|_{L^2 (\O \times \O)} \leq C \left( \| \vec v \|_{H^1} + \| \vec Q \|_2 \right)$ for all $\vec v \in H^1 (\O, \Rn)$ and $\vec Q \in L^2 (\O, \Rnnsd)$.
Then,
\begin{align*}
 & \left| \int_{\O} \int_{\O} \r_{\d} (\vec x' {-} \vec x) \left[ \left( \mc{E} (\bar{\vec u}, \bar{\vec P}) (\vec x, \vec x') - \frac{1}{n} \mf{E}_{\d} (\bar{\vec u}, \bar{\vec P}) (\vec x) \right)^2 - \left( \mc{E} (\vec u, \vec P) (\vec x, \vec x') - \frac{1}{n} \mf{E}_{\d} (\vec u, \vec P) (\vec x) \right)^2 \right] \dd \vec x' \, \dd \vec x \right| \\
 = & \left| \left\| T_{\d} (\bar{\vec u}, \bar{\vec P}) \right\|_{L^2 (\O \times \O)}^2 - \left\| T_{\d} (\vec u, \vec P) \right\|_{L^2 (\O \times \O)}^2 \right| \leq \left\| T_{\d} (\bar{\vec u} - \vec u, \bar{\vec P} - \vec P)  \right\|_{L^2 (\O \times \O)} \left\| T_{\d} (\bar{\vec u} + \vec u, \bar{\vec P} + \vec P)  \right\|_{L^2 (\O \times \O)} \\
 \leq & 4 \, C^2 \, \e \left( \e + \left\| \vec u \right\|_{H^1} + \left\| \vec P \right\|_2 \right).
\end{align*}
This concludes the proof.
\end{proof}

\begin{lemma}[Convergence of $F_{\d}$ along smooth sequences]\label{le:limitFdupNEW}
Let $A \subset \O$ be a Lipschitz domain.
For each $\d>0$ let $\vec u_{\d}, \vec u \in C^1 (\bar{A}, \Rn)$, $\vec P_{\d}, \vec P \in C (\bar{A}, \Rnnsd)$ and $d_{\d} \in C (\bar{A})$ satisfy
\[
 \vec u_{\d} \to \vec u \text{ in } C^1 (\bar{A}, \Rn), \qquad \vec P_{\d} \to \vec P \text{ in } C (\bar{A}, \Rnnsd) \quad \text{and} \quad d_{\d} \to \div \vec u \text{ in } C (\bar{A}) \quad \text{as } \d \to 0.
\]
Then 
\begin{align*}
 & \lim_{\d \to 0} \int_A \int_A \r_{\d} (\vec x' {-} \vec x) \left( \mc{E} (\vec u_{\d}, \vec P_{\d}) (\vec x, \vec x') - d_{\d}(\vec x) \right)^2 \dd \vec x' \, \dd \vec x \\
 & = n \int_A \dashint_{\Sn} \left( \left( \nabla \vec u (\vec x) - \vec P (\vec x) \right) \vec z \cdot \vec z - \div \vec u (\vec x) \right)^2 \dd \Hn (\vec z) \, \dd \vec x .
\end{align*}
\end{lemma}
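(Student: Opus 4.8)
The plan is to reduce the statement, via a uniform stability estimate, to the fixed‑function computation already carried out in \emph{Step 1} of the proof of Proposition \ref{prop:limitFdup}, now performed on the subdomain $A$.

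First I would decouple the $\d$-dependent data from the limit object. Write
\[
 a_\d(\vec x, \vec x') = \mc{E}(\vec u_\d, \vec P_\d)(\vec x, \vec x') - d_\d(\vec x), \qquad b(\vec x, \vec x') = \mc{E}(\vec u, \vec P)(\vec x, \vec x') - \div \vec u(\vec x) .
\]
Since $\mc{E}$ is linear in $(\vec u, \vec P)$, we have $a_\d - b = \mc{E}(\vec u_\d - \vec u, \vec P_\d - \vec P) - (d_\d - \div \vec u)$, so the bound \eqref{eq:boundEgrad} (valid on the Lipschitz domain $A$ with a constant depending only on $A$) yields
\[
 \| a_\d - b \|_{L^\infty(A \times A)} \leq c \, \| \nabla(\vec u_\d - \vec u) \|_\infty + \| \vec P_\d - \vec P \|_\infty + \| d_\d - \div \vec u \|_\infty =: \e_\d \longrightarrow 0 ,
\]
while the same bound makes $\| a_\d \|_{L^\infty(A \times A)}$ and $\| b \|_{L^\infty(A \times A)}$ bounded by some $M$ uniformly in $\d$, the sequences $\nabla \vec u_\d$, $\vec P_\d$, $d_\d$ being convergent in the sup norm. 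Hence $| a_\d^2 - b^2 | \leq 2 M \e_\d$ pointwise, and since $\| \r_\d \|_1 = n$ gives $\int_A \int_A \r_\d(\vec x' {-} \vec x) \, \dd \vec x' \, \dd \vec x \leq n |A|$, we obtain
\[
 \left| \int_A \int_A \r_\d(\vec x' {-} \vec x) \left( a_\d^2 - b^2 \right) \dd \vec x' \, \dd \vec x \right| \leq 2 M n |A| \, \e_\d \longrightarrow 0 .
\]
It therefore remains only to identify $\displaystyle \lim_{\d \to 0} \int_A \int_A \r_\d(\vec x' {-} \vec x) \, b(\vec x, \vec x')^2 \, \dd \vec x' \, \dd \vec x$.

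For this last limit I would run \emph{Step 1} of the proof of Proposition \ref{prop:limitFdup} with $\O$ replaced by the Lipschitz domain $A$: there $\vec u$ and $\vec P$ are fixed, so the Taylor-type estimate \eqref{eq:diffgrad}, the spherical averaging identity of Lemma \ref{le:formulasB}, the equiintegrability furnished by \eqref{eq:boundEgrad}, and the concentration property \eqref{newR2} all apply unchanged, producing the analogues of \eqref{eq:Edivfirst}, \eqref{eq:Edivsecond} and \eqref{eq:Edivthird}, namely
\[
 \lim_{\d \to 0} \int_A \int_A \r_\d(\vec x' {-} \vec x) \, \mc{E}(\vec u, \vec P)(\vec x, \vec x')^2 \, \dd \vec x' \, \dd \vec x = n \int_A \dashint_{\Sn} \left( (\nabla \vec u(\vec x) - \vec P(\vec x)) \vec z \cdot \vec z \right)^2 \dd \Hn(\vec z) \, \dd \vec x ,
\]
\[
 \lim_{\d \to 0} \int_A \int_A \r_\d(\vec x' {-} \vec x) \, \mc{E}(\vec u, \vec P)(\vec x, \vec x') \div \vec u(\vec x) \, \dd \vec x' \, \dd \vec x = n \int_A \dashint_{\Sn} (\nabla \vec u(\vec x) - \vec P(\vec x)) \vec z \cdot \vec z \, \div \vec u(\vec x) \, \dd \Hn(\vec z) \, \dd \vec x ,
\]
and $\displaystyle \lim_{\d \to 0} \int_A \int_A \r_\d(\vec x' {-} \vec x) \div \vec u(\vec x)^2 \, \dd \vec x' \, \dd \vec x = n \int_A \div \vec u(\vec x)^2 \, \dd \vec x$. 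Expanding $b^2 = \mc{E}(\vec u, \vec P)^2 - 2 \, \mc{E}(\vec u, \vec P) \div \vec u + (\div \vec u)^2$ and combining these three limits with coefficients $1, -2, 1$ gives precisely $n \int_A \dashint_{\Sn} ( (\nabla \vec u - \vec P) \vec z \cdot \vec z - \div \vec u )^2 \, \dd \Hn \, \dd \vec x$, which is the assertion.

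The only subtlety is that the data vary with $\d$, so Proposition \ref{prop:limitFdup} cannot be invoked directly; the stability estimate of the first step is exactly what separates the moving sequences $(\vec u_\d, \vec P_\d, d_\d)$ from the fixed limit object, after which the computation for fixed $C^1$ functions carries over verbatim on $A$. One only needs to observe that the constant $c$ in \eqref{eq:differenceq}--\eqref{eq:boundEgrad} depends solely on the Lipschitz domain $A$ and that the factor $n$ stems from $\| \r_\d \|_1 = n$, so that both are independent of $\d$; this is the whole content of the argument, and there is no real obstacle beyond bookkeeping.
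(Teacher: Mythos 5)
Your proof is correct and takes essentially the same route as the paper: the paper likewise decouples the moving data from the fixed limit by a uniform $L^\infty$ argument (there via the factorization $a_\delta^2-b^2=(a_\delta-b)(a_\delta+b)$ together with the bound \eqref{eq:boundEgrad}), and then invokes the fixed-function computation from Step 1 of Proposition \ref{prop:limitFdup} restricted to $A$. Your explicit expansion of $b^2$ into the three terms handled by the analogues of \eqref{eq:Edivfirst}--\eqref{eq:Edivthird} simply spells out what the paper compresses into the citation of \eqref{eq:limitE-Ed}/\eqref{eq:divudivu}.
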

\begin{proof}
We have
\[
 \left( \mc{E} (\vec u_{\d}, \vec P_{\d}) - d_{\d}) \right)^2 - \left( \mc{E} (\vec u, \vec P) - \div \vec u \right)^2 = \left[  \mc{E} (\vec u_{\d} - \vec u, \vec P_{\d} - \vec P) + \div \vec u - d_{\d} \right] \left[ \mc{E} (\vec u_{\d} + \vec u, \vec P_{\d} + \vec P) - d_{\d} - \div \vec u \right] .
\]
We now use estimates \eqref{eq:boundEgrad} to infer that
\[
 \lim_{\d \to 0} \left\| \left( \mc{E} (\vec u_{\d}, \vec P_{\d}) - d_{\d} \right)^2 - \left( \mc{E} (\vec u, \vec P) - \div \vec u \right)^2 \right\|_{\infty} = 0 .
\]
Then, by uniform convergence and equality \eqref{eq:limitE-Ed} we conclude
\begin{align*}
 & \lim_{\d \to 0} \int_A \int_A \r_{\d} (\vec x' {-} \vec x) \left( \mc{E} (\vec u_{\d}, \vec P_{\d}) (\vec x, \vec x') - d_{\d}(\vec x) \right)^2 \dd \vec x' \, \dd \vec x \\
 = & \lim_{\d \to 0} \int_A \int_A \r_{\d} (\vec x' {-} \vec x) \left( \mc{E} (\vec u, \vec P) (\vec x, \vec x') - \div \vec u (\vec x) \right)^2 \dd \vec x' \, \dd \vec x \\
 = & n \int_A \dashint_{\Sn} \left( \left( \nabla \vec u (\vec x) - \vec P (\vec x) \right) \vec z \cdot \vec z - \div \vec u (\vec x) \right)^2 \dd \Hn (\vec z) \, \dd \vec x ,
\end{align*}
as desired.
\end{proof}

The following nonlocal Korn inequality of \cite[Lemma 4.4]{MeDu15}, with a constant independent of $\d$, is essential in the proof of the $\G$-convergence.
\begin{proposition}[Uniform nonlocal Korn inequality]\label{prop:Kornd}
%Let $V$ be a closed subspace of $L^p (\O, \Rn)$ such that $V \cap \mc{R} = \{ \vec 0 \}$.
Let $\{ \r_{\d} \}_{\d>0}$ be a family of kernels satisfying \eqref{newR}--\eqref{newR2}.
Then there exist $C>0$ and $\d_0>0$ such that for all $0 < \d < \d_0$ and $\vec u \in V \cap \mc{S}_{\d} (\O)$,
\[
 \left\| \vec u \right\|_2^2 \leq C \left| \vec u \right|_{\mc{S}_{\d}}^2 .
\]
\end{proposition}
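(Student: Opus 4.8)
The plan is to argue by contradiction through a compactness argument, exactly as in the standard proofs of uniform Korn- or Poincar\'e-type inequalities. Suppose the assertion fails. Then for each $k \in \N$ there are $0 < \d_k < 1/k$ and $\vec u_k \in V \cap \mc{S}_{\d_k} (\O)$ with $\| \vec u_k \|_2^2 > k \, | \vec u_k |_{\mc{S}_{\d_k}}^2$; in particular $| \vec u_k |_{\mc{S}_{\d_k}} \ne 0$, so after rescaling we may take $\| \vec u_k \|_2 = 1$, whence $| \vec u_k |_{\mc{S}_{\d_k}}^2 < 1/k \to 0$ while $\d_k \to 0$. The target is to produce an $L^2$-limit of (a subsequence of) $\{ \vec u_k \}$ that is simultaneously forced to have unit $L^2$-norm and to be an infinitesimal rigid displacement belonging to $V$, which is impossible since $V \cap \mc{R} = \{ \vec 0 \}$.

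The central step is compactness. From $\sup_k \| \vec u_k \|_2 < \infty$, $\sup_k | \vec u_k |_{\mc{S}_{\d_k}} < \infty$ and $\d_k \to 0$ I would extract a subsequence (not relabelled) converging strongly in $L^2 (\O, \Rn)$ to some $\vec u$ with $\nabla^s \vec u \in L^2 (\O, \Rnn)$, hence $\vec u \in H^1 (\O, \Rn)$ (classical Korn's second inequality on the bounded Lipschitz domain $\O$, or directly from the nonlocal compactness). This is precisely the Bourgain--Brezis--Mironescu/Ponce compactness theory \cite{BoBrMi01,Brezis02,Ponce04} in the directional form adapted to the difference quotient $\mc{D} (\vec u)$ and to the kernel class \eqref{newR}--\eqref{newR2} by Mengesha and Du, i.e.\ the result imported here from \cite[Lemma 4.4]{MeDu15}. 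I expect this to be the main obstacle: $\mc{D} (\vec u) (\vec x, \vec x')$ controls only the component of $\vec u (\vec x') - \vec u (\vec x)$ along the radial direction $(\vec x' {-} \vec x) / |\vec x' {-} \vec x|$, so, unlike for the full difference quotient, uniform control of translations $\| \vec u_k (\cdot + \vec h) - \vec u_k \|_2$ — and hence $L^2$-precompactness — is not immediate and must be recovered by a nonlocal Korn mechanism that genuinely exploits the monotonicity \eqref{newR} and the concentration \eqref{newR2} of the kernels.

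Granting the compactness, the rest is short. Strong $L^2$-convergence together with the same kernel assumptions yields the lower-semicontinuity bound $\liminf_{k \to \infty} | \vec u_k |_{\mc{S}_{\d_k}}^2 \ge c \, \| \nabla^s \vec u \|_2^2$ for some dimensional $c > 0$ — the $\G$-$\liminf$ inequality for $\vec u \mapsto | \vec u |_{\mc{S}_{\d}}^2$, closely related to the pointwise computation of Proposition \ref{prop:limitFdup} with $\vec P = \vec 0$, and again contained in \cite{MeDu15}. Since the left-hand side is $0$, we get $\nabla^s \vec u = \vec 0$ a.e., so $\vec u \in \mc{R}$; and since $V$ is closed in $L^2 (\O, \Rn)$, also $\vec u \in V$, hence $\vec u \in V \cap \mc{R} = \{ \vec 0 \}$ by \cite{DuGuLeZh13ELAS}. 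On the other hand $\| \vec u \|_2 = \lim_k \| \vec u_k \|_2 = 1$, a contradiction; this proves the existence of $C$ and $\d_0$. (The same scheme, with $\r$ fixed, re-proves Proposition \ref{pr:Korn}; the only new point here is the uniformity of $C$ as $\d \to 0$, which is why the kernel structure \eqref{newR}--\eqref{newR2} is needed.)
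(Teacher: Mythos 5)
The paper offers no proof of this proposition to compare against: it is imported verbatim from \cite[Lemma 4.4]{MeDu15}. Your compactness-plus-contradiction scheme is essentially the argument of that reference, and it is sound, with two points worth making explicit. First, there is no circularity in invoking Proposition \ref{prop:compactness}: that statement is \cite[Prop.\ 4.2]{MeDu15}, established there independently of (and prior to) the uniform Korn inequality, and your normalization $\|\vec u_k\|_2=1$, $|\vec u_k|^2_{\mc{S}_{\d_k}}<1/k$ supplies exactly the hypothesis $\sup_k\|\vec u_k\|_{\mc{S}_{\d_k}}<\infty$ it needs; it also yields $\vec u\in H^1(\O,\Rn)$ directly, so no separate appeal to classical Korn is required. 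Second, the step $\liminf_{k}|\vec u_k|^2_{\mc{S}_{\d_k}}\ge c\,\|\nabla^s\vec u\|_2^2$, which you only gesture at, should be justified; it does follow from machinery already in the paper: mollify and use Jensen as in Lemma \ref{le:Jensen} (with the $\mf{E}$-term dropped), pass to the limit $\d\to 0$ for smooth functions as in Step 1 of Proposition \ref{prop:limitFdup} (equality \eqref{eq:Edivfirst} with $\vec P\equiv\vec 0$), then send the mollification parameter to zero and $A\nearrow\O$ as in part \emph{\ref{item:Gammab})} of Theorem \ref{thm:gamma}, obtaining $\liminf_k|\vec u_k|^2_{\mc{S}_{\d_k}}\ge n\int_\O\dashint_{\Sn}\bigl(\nabla^s\vec u(\vec x)\,\vec z\cdot\vec z\bigr)^2\dd\Hn(\vec z)\,\dd\vec x$; since $\vec A\mapsto\dashint_{\Sn}(\vec A\vec z\cdot\vec z)^2\,\dd\Hn(\vec z)$ is a positive definite quadratic form on symmetric matrices (computable as in Lemma \ref{le:formulasB}), this gives the claimed bound with a dimensional $c>0$. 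With these two ingredients the contradiction closes exactly as you state: $\nabla^s\vec u=\vec 0$ forces $\vec u\in\mc{R}$, strong $L^2$-convergence and the $L^2$-closedness of $V$ give $\vec u\in V$, hence $\vec u=\vec 0$ while $\|\vec u\|_2=1$. One caveat on your closing parenthesis: the same scheme does not re-prove Proposition \ref{pr:Korn} for a fixed integrable kernel, since for fixed $\r$ a bound on $\|\cdot\|_{\mc{S}_{\r}}$ carries no $L^2$-compactness (for nonsingular kernels $\mc{S}_{\r}(\O)$ may even coincide with $L^2(\O,\Rn)$); the fixed-$\r$ inequality of \cite[Prop.\ 2.7]{MeDu15} rests on a different argument, and it is only the uniformity as $\d\to 0$ that the structure \eqref{newR}--\eqref{newR2} and the compactness buy you.
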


With Proposition \ref{prop:Kornd} at hand, we can show the following coercivity bound for $F_{\d}$.

\begin{lemma}[Uniform coercivity of the energy]\label{le:Fdcoercivity}
%Let $V$ be a closed subspace of $L^p (\O, \Rn)$ such that $V \cap \mc{R} = \{ \vec 0 \}$.
Let $\{ \r_{\d} \}_{\d>0}$ be a family of kernels satisfying \eqref{newR}--\eqref{newR2}.
Then there exist $c>0$ and $\d_0>0$ such that for all $0 < \d < \d_0$ and $(\vec u, \vec P) \in (V \cap \mc{S}_{\d} (\O)) \times L^2 (\O, \Rnnsd)$,
\[
 F_{\d} (\vec u, \vec P) \geq c \|(\vec u, \vec P)\|_{\mc{T}_{\d}}^2 -\frac{1}{c} .
\]
\end{lemma}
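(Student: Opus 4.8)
The plan is to retrace, almost verbatim, the argument proving Lemma \ref{le:Fcoerc}, while keeping track of the fact that every constant produced there depends only on $\a$, $\b$, $\g$, $n$ and on the normalization $\|\r_\d\|_1=n$ — hence not on $\d$ — with the single exception of the Korn constant. For that one I would invoke the \emph{uniform} nonlocal Korn inequality, Proposition \ref{prop:Kornd}, in place of Proposition \ref{pr:Korn}; this is exactly what forces the restriction $0<\d<\d_0$ in the statement, with $\d_0$ inherited from Proposition \ref{prop:Kornd}. Since the admissible set is $(V\cap\mc{S}_\d(\O))\times L^2(\O,\Rnnsd)$, Lemma \ref{le:Tr} lets us assume throughout that $(\vec u,\vec P)\in\mc{T}_\d(\O)$.

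In more detail, I would first fix $0<\eta<1$ and use the elementary pointwise bound $\big(\mc{E}(\vec u,\vec P)(\vec x,\vec x')-\tfrac1n\mf{E}_\d(\vec u,\vec P)(\vec x)\big)^2\ge(1-\eta)\mc{E}(\vec u,\vec P)(\vec x,\vec x')^2-(\eta^{-1}-1)\tfrac1{n^2}\mf{E}_\d(\vec u,\vec P)(\vec x)^2$ together with the decomposition \eqref{eq:DEp}. Since $\|\r_\d\|_1=n$, this yields $|\mf{E}_\d(\vec u,\vec P)|\le|\mf{D}_\d(\vec u)|+n|\vec P|$ and $\int_\O\int_\O\r_\d(\vec x'{-}\vec x)\,\mf{E}_\d(\vec u,\vec P)(\vec x)^2\,\dd\vec x'\,\dd\vec x\le n\|\mf{E}_\d(\vec u,\vec P)\|_2^2$, which are precisely the $\d$-free estimates behind \eqref{eq:Fgeq2}--\eqref{eq:Fgeq3}. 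Combining with Young's inequality $\big|\int_\O\vec b\cdot\vec u\big|\le\tfrac1{2\eta_1}\|\vec b\|_2^2+\tfrac{\eta_1}2\|\vec u\|_2^2$ and choosing $\eta$ close enough to $1$ that $\b-\tfrac2n\a(\eta^{-1}-1)\ge0$ and $\g-2n(\eta^{-1}-1)>0$ — a choice depending only on $\a,\b,\g,n$ — I would arrive, as in \eqref{eq:Fgeq}, at $F_\d(\vec u,\vec P)\ge c\big(|(\vec u,\vec P)|_{\mc{T}_\d}^2+\|\vec P\|_2^2\big)-\tfrac{\eta_1}2\|\vec u\|_2^2-\tfrac1{2\eta_1}\|\vec b\|_2^2$ with $c>0$ independent of $\d$.

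At this point, for $0<\d<\d_0$, Proposition \ref{prop:Kornd} supplies a constant $C$ independent of $\d$ with $\|\vec u\|_2^2\le C|\vec u|_{\mc{S}_\d}^2$, which via \eqref{eq:uSupT} gives $\|\vec u\|_2^2\le 2nC\big(|(\vec u,\vec P)|_{\mc{T}_\d}^2+\|\vec P\|_2^2\big)$. I would then absorb a fixed fraction of the $\mc{T}_\d$-term to dominate $\tfrac{\eta_1}2\|\vec u\|_2^2$, exactly as in the last lines of the proof of Lemma \ref{le:Fcoerc}, and finally fix $\eta_1>0$ so small that $\tfrac{c}{4nC}-\tfrac{\eta_1}2>0$; relabelling $c$, this gives $F_\d(\vec u,\vec P)\ge c\|(\vec u,\vec P)\|_{\mc{T}_\d}^2-\tfrac1c$ for all $0<\d<\d_0$. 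I do not expect any real obstacle: the point is simply the bookkeeping that every constant except the Korn constant is an explicit function of $\a,\b,\g,n$ (and of $\sqrt n$ through \eqref{eq:uSupT}) and hence does not degenerate as $\d\to0$, while the uniformity of the Korn constant — the only non-elementary input — is already available from Proposition \ref{prop:Kornd}, at the price of the threshold $\d_0$.
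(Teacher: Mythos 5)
Your proposal is correct and follows essentially the same route as the paper's own proof: repeat the argument of Lemma \ref{le:Fcoerc} up to \eqref{eq:Fgeq}, observing that the constants there depend only on $\a,\b,\g,n$ and the normalization $\|\r_\d\|_1=n$, and then replace Proposition \ref{pr:Korn} by the uniform Korn inequality of Proposition \ref{prop:Kornd} (the source of $\d_0$), absorbing $\|\vec u\|_2^2$ via \eqref{eq:uSupT} and a suitable choice of the Young parameter. No gaps.
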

\begin{proof}
We repeat the proof of Lemma \ref{le:Fcoerc} until \eqref{eq:Fgeq}: we then find that there exists $c_1>0$ such that for all $\d>0$, all $(\vec u, \vec P) \in \mc{T}_{\d}$ and all $\eta > 0$,
\[
 F_{\d} (\vec u, \vec P) \geq c_1 \left( |(\vec u, \vec P)|_{\mc{T}_{\d}}^2 + \|\vec P\|_2^2 \right) + c_1 \left( |(\vec u, \vec P)|_{\mc{T}_{\d}}^2 + \|\vec P\|_2^2 \right) - \frac{\eta}{2} \|\vec u\|_2^2 - \frac{1}{2\eta} \| \vec b \|_2^2 .
\]
By Proposition \ref{prop:Kornd} and estimate \eqref{eq:uSupT}, there exist $C>0$ and $\d_0>0$ such that for all $0 < \d < \d_0$,
\begin{equation*}%\label{eq:Fgeq5}
  \| \vec u \|_2^2 \leq C |\vec u|_{\mc{S}_{\d}}^2 \leq 2 n C \left( |(\vec u, \vec P)|_{\mc{T}_{\d}}^2 + \| \vec P \|_2^2 \right).
\end{equation*}
Putting together both inequalities, we find that
\[
 F_{\d} (\vec u, \vec P) \geq c_1 \left( |(\vec u, \vec P)|_{\mc{T}_{\d}}^2 + \|\vec P\|_2^2 \right) + \left( \frac{c_1}{2 n C} - \frac{\eta}{2} \right) \|\vec u\|_2^2 - \frac{1}{2\eta} \| \vec b \|_2^2 .
\]
Choosing $\eta>0$ such that
\[
 \frac{c_1}{2 n C} - \frac{\eta}{2} > 0
\]
concludes the proof.
\end{proof}

We present the fundamental compactness result of  \cite[Prop.\ 4.2]{MeDu15}.

\begin{proposition}[Compactness]\label{prop:compactness}
Let $\{ \r_{\d} \}_{\d>0}$ be a  sequence  of kernels satisfying \eqref{newR}--\eqref{newR2}.
Let $\{ \vec u_{\d} \}_{\d>0}$ be a  sequence  in $L^2 (\O, \Rn)$ satisfying
\[
 \sup_{\d>0} \left\| \vec u_{\d} \right\|_{\mc{S}_{\d}} < \infty .
\]
Then there exists a decreasing sequence $\d_j \to 0$ and a $\vec u \in L^2 (\O, \Rn)$ such that $\vec u_{\d_j} \to \vec u$ in $L^2 (\O, \Rn)$.
Moreover, for any such sequence and any such $\vec u$ we have that $\vec u \in H^1 (\O, \Rn)$.
\end{proposition}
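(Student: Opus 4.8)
\emph{Proof plan.} The statement is taken from \cite[Prop.\ 4.2]{MeDu15}, so here I only outline how I would argue. Set $M := \sup_{\d>0} \| \vec u_{\d} \|_{\mc{S}_{\d}}^2$; by hypothesis $M < \infty$, hence $\sup_{\d} \| \vec u_{\d} \|_2^2 \le M$ and $\sup_{\d} | \vec u_{\d} |_{\mc{S}_{\d}}^2 \le M$. The proof is a Bourgain--Brezis--Mironescu--Ponce type compactness argument \cite{BoBrMi01,Ponce04}; the only nonstandard feature is that the seminorm $| \cdot |_{\mc{S}_{\d}}$ controls, through $\mc{D} (\vec u)$, only the radial component $(\vec u (\vec x') - \vec u (\vec x)) \cdot (\vec x' {-} \vec x) / |\vec x' {-} \vec x|^2$ of the difference quotient, rather than the full quantity $|\vec u (\vec x') - \vec u (\vec x)| / |\vec x' {-} \vec x|$.

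The first step I would take is to remove this restriction, i.e.\ to pass from radial to full difference-quotient control with $\d$-uniform constants, via a nonlocal Korn inequality of the form: for a family $\{\r_{\d}\}$ satisfying \eqref{newR}--\eqref{newR2} there are $C>0$ and $\d_0>0$ with
\[
 \int_{\O} \int_{\O} \r_{\d} (\vec x' {-} \vec x) \frac{|\vec u (\vec x') - \vec u (\vec x)|^2}{|\vec x' {-} \vec x|^2} \, \dd \vec x' \, \dd \vec x \leq C \left( | \vec u |_{\mc{S}_{\d}}^2 + \| \vec u \|_2^2 \right) , \qquad 0 < \d < \d_0 ,
\]
for all $\vec u \in \mc{S}_{\d} (\O)$. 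This is the natural companion of Proposition \ref{prop:Kornd} and rests on the same rigidity mechanism: at scales comparable to $\d$, the bonds that are slanted with respect to a transverse oscillation of $\vec u$ already detect that oscillation through their radial component, so that no essential loss of information occurs; I would establish it following \cite[Sect.\ 4]{MeDu15}. Applied to the $\vec u_{\d}$, this gives $\sup_{\d>0} \int_{\O} \int_{\O} \r_{\d} (\vec x' {-} \vec x) |\vec u_{\d} (\vec x') - \vec u_{\d} (\vec x)|^2 / |\vec x' {-} \vec x|^2 \, \dd \vec x' \, \dd \vec x < \infty$.

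The second step is then the vectorial Bourgain--Brezis--Mironescu--Ponce compactness theorem applied componentwise: a sequence bounded in $L^2 (\O, \Rn)$ whose Gagliardo-type energies weighted by a family of radial kernels concentrating at $\vec 0$ (as guaranteed by \eqref{newR}--\eqref{newR2}) are uniformly bounded is precompact in $L^2_{\loc} (\O, \Rn)$, and every limit point lies in $H^1 (\O, \Rn)$. Concretely, for $A \ssubset \O$ and $|\vec h|$ small I would bound $\| \vec u_{\d} (\cdot + \vec h) - \vec u_{\d} \|_{L^2 (A)}$ by writing the increment as an average of $[\vec u_{\d} (\cdot + \vec h) - \vec u_{\d} (\cdot + \vec \xi)] + [\vec u_{\d} (\cdot + \vec \xi) - \vec u_{\d}]$ over $\vec \xi$ in a ball of radius $|\vec h|$, so that the increment directions fill all of $\Sn$; this reduces the translation to a small-scale Gagliardo integral which, after matching the scale with that of $\r_{\d}$, is controlled by the uniform bound of the first step, producing a modulus of continuity independent of $\d$. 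Riesz--Fr\'echet--Kolmogorov then gives precompactness in $L^2_{\loc}$, and a $\d$-uniform extension of the $\vec u_{\d}$ to a slightly larger domain (available since $\O$ is bounded and Lipschitz) upgrades this to precompactness in $L^2 (\O, \Rn)$, yielding $\d_j \to 0$ and $\vec u \in L^2 (\O, \Rn)$ with $\vec u_{\d_j} \to \vec u$.

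For the final assertion, let $\d_j \to 0$ and $\vec u_{\d_j} \to \vec u$ strongly in $L^2 (\O, \Rn)$ be \emph{arbitrary} (this covers both the subsequence just produced and any other). A Fatou-type lower-semicontinuity argument in the spirit of Lemma \ref{le:SrW12} and \cite[Sect.\ 3]{MeDu15} gives $\int_{\O} |\nabla^s \vec u|^2 \, \dd \vec x \le C \liminf_{j} | \vec u_{\d_j} |_{\mc{S}_{\d_j}}^2 < \infty$; since $\vec u \in L^2 (\O, \Rn)$ as well, the classical local Korn inequality on the Lipschitz domain $\O$ yields $\vec u \in H^1 (\O, \Rn)$. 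I expect the crux to be the $\d$-uniform nonlocal Korn inequality of the first step --- equivalently, the fact that control of only the radial difference-quotient energy already forces $\d$-uniform compactness; this is the rigidity phenomenon specific to the bond-based energy, and once it is in hand the remainder is a routine adaptation of the scalar Bourgain--Brezis--Mironescu--Ponce machinery together with the boundary argument on the Lipschitz domain.
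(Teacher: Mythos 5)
The paper does not actually prove this proposition: it is quoted verbatim from \cite[Prop.\ 4.2]{MeDu15}, and no proof is reproduced here. Your outline is therefore compared against the reference rather than against any argument in the paper, and you yourself defer the key points to \cite{MeDu15}, so to that extent you are in the same position as the authors. That said, two of the steps in your plan are stated as if they were routine when in fact they carry essentially all the difficulty, and at least the first one does not appear in the form you suggest.

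Your Step~1 posits a $\d$-uniform ``full'' nonlocal Korn inequality
\begin{equation*}
 \int_{\O} \int_{\O} \r_{\d} (\vec x' {-} \vec x) \frac{|\vec u (\vec x') - \vec u (\vec x)|^2}{|\vec x' {-} \vec x|^2} \, \dd \vec x' \, \dd \vec x \leq C \left( | \vec u |_{\mc{S}_{\d}}^2 + \| \vec u \|_2^2 \right) , \qquad 0 < \d < \d_0 ,
\end{equation*}
with $C$ independent of $\d$, for \emph{all} $\vec u \in \mc{S}_{\d}(\O)$ (no boundary condition). This is a strong claim, different in kind from the uniform Korn--Poincar\'e inequality of Proposition~\ref{prop:Kornd}, which bounds $\|\vec u\|_2$ in terms of $|\vec u|_{\mc S_\d}$ and uses $\vec u \in V$. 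What you ask for is that the radial (bond-projected) seminorm uniformly dominates the full Gagliardo-type seminorm, and the pointwise inequality goes the \emph{wrong} way --- $|\mc D(\vec u)(\vec x,\vec x')| \le |\vec u(\vec x')-\vec u(\vec x)|/|\vec x'{-}\vec x|$ --- so this is genuinely a rigidity statement, not an algebraic bound. Mengesha's Korn-type characterization \cite{Mengesha12} gives something like this for a \emph{fixed} kernel, but uniformity as the horizon shrinks is precisely where the content lies, and you provide no argument for it; ``I would establish it following [MeDu15, Sect.~4]'' is not a proof, and to my reading the compactness proof in \cite{MeDu15} does not pass through an inequality of this form. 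Rather, it works \emph{directly} with the radial seminorm: a translation $\|\vec u_\d(\cdot{+}\vec h)-\vec u_\d\|_{L^2(A)}$ is bounded by averaging bond projections over directions (exactly the spherical-average identities underlying Lemma~\ref{le:formulasB}), so that the directional degeneracy of each individual bond is repaired by the isotropy of the kernel; one never needs the full Gagliardo control as an intermediate object. Your heuristic (``slanted bonds already detect transverse oscillation'') is the right picture, but it should go straight into the Riesz--Fr\'echet--Kolmogorov estimate, not be packaged as a stand-alone uniform Korn inequality that would require its own compactness argument (risking circularity).

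The second soft spot is the upgrade from $L^2_{\loc}$ to $L^2(\O)$, which the statement requires (strong $L^2(\O,\Rn)$ convergence of the full sequence). You invoke ``a $\d$-uniform extension of the $\vec u_\d$ to a slightly larger domain,'' but for these bond energies on a Lipschitz domain such an extension with $\d$-independent seminorm control is itself nontrivial, and is not simply inherited from componentwise Sobolev extension because the seminorm is radial rather than full. In \cite{MeDu15} this is handled by a boundary-layer estimate rather than by extension. The final step (Fatou plus local Korn to conclude $\vec u \in H^1$) is fine and matches the standard argument. In summary, your plan is plausible in outline, but the two load-bearing steps --- the passage from radial to full control uniformly in $\d$, and the boundary-layer control needed for global $L^2$ compactness --- are asserted rather than argued, and the first is not the route taken in the cited reference.
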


We now have all ingredients to prove the $\G$-limit result.
As usual, we divide it into three parts: compactness, lower bound and upper bound.
We label the sequences with $\d$, the same parameter of $F_{\d}$, and, of course, it is implicit that $\d \to 0$.

\begin{theorem}[$\Gamma$-convergence of the energy]\label{thm:gamma}
%Let $V$ be a weakly closed subset of $L^{ 2 } (\O, \Rn)$ such that $V \cap \mc{R} = \{ \vec 0 \}$.
Let $\mc{V}_{\d} = (V\cap \mc{S}_{\d}) \times L^2 (\O, \Rnnsd)$.
\begin{enumerate}[a)]
\item\label{item:Gammaa} Let $(\vec u_{\d}, \vec P_{\d}) \in \mc{V}_{\d}$ satisfy $\sup_{\d} F_{\d} (\vec u_{\d}, \vec P_{\d}) < \infty$.
Then there exists  $(\vec u, \vec P) \in H^1 (\O, \Rn) \times L^2 (\O, \Rnnsd)$ such that, for a subsequence, $\vec u_{\d} \to \vec u$ in $L^2 (\O, \Rn)$ and $\vec P_{\d} \weakc \vec P$ in $L^2 (\O, \Rnnsd)$.

\item\label{item:Gammab} Let $(\vec u_{\d}, \vec P_{\d}) \in \mc{V}_{\d}$ and $(\vec u, \vec P) \in H^1 (\O, \Rn) \times L^2 (\O, \Rnnsd)$ satisfy $\vec u_{\d} \to \vec u$ in $L^2 (\O, \Rn)$ and $\vec P_{\d} \weakc \vec P$ in $L^2 (\O, \Rnnsd)$.
Then
\[
 F_0 (\vec u, \vec P) \leq \liminf_{\d \to 0} F_{\d} (\vec u_{\d}, \vec P_{\d}) .
\]

\item\label{item:Gammac} Let $(\vec u, \vec P) \in (V \cap H^1 (\O, \Rn)) \times L^2 (\O, \Rnnsd)$.
Then for each $\d$ there exists $(\vec u_{\d}, \vec P_{\d}) \in \mc{V}_{\d}$ such that  
\[
 F_0 (\vec u, \vec P) = \lim_{\d \to 0} F_{\d} (\vec u_{\d}, \vec P_{\d}) .
\]
\end{enumerate}
\end{theorem}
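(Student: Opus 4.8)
The plan is to prove the three parts of Theorem \ref{thm:gamma} in order, exploiting the machinery already assembled.

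\textbf{Part \ref{item:Gammaa}) (compactness).} Assume $\sup_\d F_\d(\vec u_\d, \vec P_\d) < \infty$. By the uniform coercivity Lemma \ref{le:Fdcoercivity}, for $\d$ small enough we get a uniform bound on $\|(\vec u_\d, \vec P_\d)\|_{\mc{T}_\d}^2$; in particular, using \eqref{eq:uSupT} and Lemma \ref{le:Tr}, the sequence $\{\vec P_\d\}$ is bounded in $L^2(\O, \Rnnsd)$ and $\sup_\d |\vec u_\d|_{\mc{S}_\d} < \infty$ (hence $\sup_\d \|\vec u_\d\|_{\mc{S}_\d} < \infty$ by Proposition \ref{prop:Kornd}). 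Then Proposition \ref{prop:compactness} gives a subsequence $\d_j \to 0$ and $\vec u \in H^1(\O,\Rn)$ with $\vec u_{\d_j} \to \vec u$ in $L^2(\O,\Rn)$; up to a further subsequence $\vec P_{\d_j} \weakc \vec P$ in $L^2(\O,\Rnnsd)$. Since each $\vec u_\d \in V$ and $V$ is closed under weak $L^2$ (indeed strong $L^2$) convergence, $\vec u \in V$.

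\textbf{Part \ref{item:Gammab}) (liminf inequality).} Write $F_\d$ as the sum of the nonnegative quadratic terms $\b \|\mf{D}_\d(\vec u_\d)\|_2^2$, $\a \|T_\d(\vec u_\d, \vec P_\d)\|_{L^2(\O\times\O)}^2$, $\g\|\vec P_\d\|_2^2$, and the linear term $-\int_\O \vec b \cdot \vec u_\d$. The linear term converges since $\vec u_\d \to \vec u$ strongly in $L^2$. The term $\g\|\vec P_\d\|_2^2$ is weakly lower semicontinuous, contributing $\g\|\vec P\|_2^2$. For the divergence term, Lemma \ref{le:Eddiv}.\ref{item:Eddivb} (using $\sup_\d |\vec u_\d|_{\mc{S}_\d} < \infty$, which follows from $\sup_\d F_\d < \infty$ exactly as in Part a) gives $\mf{D}_\d(\vec u_\d) = \mf{E}_\d(\vec u_\d, \vec 0) \weakc \div \vec u$ in $L^2(\O)$, whence $\liminf_\d \|\mf{D}_\d(\vec u_\d)\|_2^2 \geq \|\div \vec u\|_2^2$. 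The delicate term is $\a\|T_\d(\vec u_\d,\vec P_\d)\|^2$. Here I would adapt the argument of \cite[Prop.\ 4.3 / Lemma 4.5]{MeDu15}: localize to $A \ssubset \O$, use the monotonicity \eqref{newR} of $r^{-2}\bar\r_\d(r)$ together with a convexity/blow-up argument to show that the nonlocal quadratic form is bounded below, in the limit, by the local one $n \int_A \dashint_{\Sn}((\nabla \vec u - \vec P)\vec z\cdot\vec z - \tfrac1n \div \vec u)^2 \dd\Hn(\vec z)\dd\vec x$; since $\mf{E}_\d(\vec u_\d,\vec P_\d) \weakc \div\vec u$ by Lemma \ref{le:Eddiv}.\ref{item:Eddivb}, the subtracted term passes to the limit correctly. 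Taking $A \uparrow \O$ yields the full lower bound, and summing the four contributions gives $F_0(\vec u,\vec P) \leq \liminf_\d F_\d(\vec u_\d,\vec P_\d)$.

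\textbf{Part \ref{item:Gammac}) (recovery sequence).} Given $(\vec u, \vec P) \in (V \cap H^1(\O,\Rn)) \times L^2(\O,\Rnnsd)$, I would first treat the smooth case $\vec u \in C^1(\bar\O,\Rn) \cap V$, $\vec P \in C(\bar\O,\Rnnsd)$: take $(\vec u_\d, \vec P_\d) = (\vec u, \vec P)$ for all $\d$ and invoke Proposition \ref{prop:limitFdup} directly, which gives $F_\d(\vec u,\vec P) \to F_0(\vec u,\vec P)$. For general $(\vec u,\vec P)$, approximate in $H^1 \times L^2$ by smooth pairs $(\vec u^k, \vec P^k)$; the main subtlety is that the recovery sequence must lie in $V$, i.e.\ vanish on $\o$. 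Since $\O \setminus \bar\o$ is Lipschitz, one can take $\vec u^k \in C^1$ supported away from $\o$ approximating $\vec u$ in $H^1$ (standard extension/cutoff, as $\vec u = \vec 0$ on $\o$); one also needs $\vec u^k \in V\cap \mc{S}_\d$, which holds since $C^1 \subset \mc{S}_\d$ by Lemma \ref{le:SrW12}. Then $F_0(\vec u^k,\vec P^k) \to F_0(\vec u,\vec P)$ by continuity of $F_0$ on $H^1\times L^2$, and $F_\d(\vec u^k, \vec P^k) \to F_0(\vec u^k,\vec P^k)$ as $\d \to 0$ for each fixed $k$. A diagonal argument (choosing $k = k(\d) \to \infty$ slowly) produces the required recovery sequence; the uniform-in-$\d$ bound $\|T_\d(\vec v,\vec Q)\|_{L^2(\O\times\O)} \leq C(\|\vec v\|_{H^1} + \|\vec Q\|_2)$ from Lemmas \ref{le:Tr}, \ref{le:linearbounded}, \ref{le:SrW12} (used exactly as in Step 2 of Proposition \ref{prop:limitFdup}) controls the error introduced by replacing $(\vec u^k,\vec P^k)$ by $(\vec u,\vec P)$ uniformly in $\d$, making the diagonalization work.

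\textbf{Main obstacle.} The genuinely hard point is the liminf inequality for the projected-strain term in Part \ref{item:Gammab}): unlike the divergence term, which is linear and handled by the weak convergence of Lemma \ref{le:Eddiv}, this term is quadratic in the nonlocal difference quotients of $\vec u_\d$, and one must show that no energy is lost in the limit despite only $L^2$-strong convergence of $\vec u_\d$ and weak convergence of $\vec P_\d$. This requires the monotonicity assumption \eqref{newR} on the kernels together with a careful localization and blow-up/convexity argument in the spirit of \cite[\S4]{MeDu15}; the presence of the extra plastic variable $\vec P_\d$ (converging only weakly) inside the difference quotient is the new complication relative to the purely elastic case, but it is benign because $\mc{E}(\vec u,\vec P)$ depends \emph{affinely} on $\vec P$ through the pointwise value $\vec P(\vec x)$, so weak $L^2$ lower semicontinuity of the resulting quadratic form handles it.
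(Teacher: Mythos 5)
Your part (a) is essentially the paper's argument (uniform coercivity, then Proposition \ref{prop:compactness} for $\vec u$ and weak $L^2$-compactness for $\vec P$), so no comment is needed there.

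For part (b) you take a genuinely different route, and it is worth comparing. You propose to adapt a blow-up/convexity argument in the style of Mengesha--Du, invoking the monotonicity assumption \eqref{newR}, and leave the central difficulty — how to couple the only-weak convergence of $\vec P_\d$ with the nonlocal difference quotients of $\vec u_\d$ — at the level of a plausibility claim. The paper resolves this with a much cleaner device you do not use: Lemma \ref{le:Jensen} (energy decreases under mollification, a pure Jensen-inequality fact). Mollification converts weak $L^2$-convergence of $\vec P_\d$ into $C(\bar A)$-convergence of $\f_r \star \vec P_\d$ and strong $L^2$-convergence of $\vec u_\d$ into $C^1(\bar A)$-convergence of $\f_r \star \vec u_\d$; combined with Lemma \ref{le:Eddiv}.\ref{item:Eddivb} for the mollified $\mf E_\d$-term, one can then invoke the smooth-sequence convergence Lemma \ref{le:limitFdupNEW}, and finish by sending $r \to 0$ (dominated convergence) and $A \nearrow \O$ (monotone convergence). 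Notably, this route avoids any direct use of \eqref{newR} in the liminf step; your proposed reliance on \eqref{newR} for the quadratic term is not what the paper does, and without the Jensen-mollification lemma your sketch of the "hard point" does not actually close the gap you yourself identify — you would still need to supply the blow-up argument in the presence of the weakly-convergent plastic field, which is precisely what the mollification trick circumvents.

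For part (c) you have overcomplicated matters: Proposition \ref{prop:limitFdup} already proves pointwise convergence $F_\d(\vec u, \vec P) \to F_0(\vec u, \vec P)$ for all $\vec u \in H^1(\O,\Rn)$ and $\vec P \in L^2(\O,\Rnnsd)$ — its Step 2 already does the density argument, uniformly in $\d$, via the bound on $T_\d$. Moreover the constant sequence $(\vec u_\d, \vec P_\d) = (\vec u, \vec P)$ automatically lies in $\mc V_\d$ because $V \cap H^1(\O,\Rn) \subset V \cap \mc S_\d$ by Lemma \ref{le:SrW12}. Your mollification-plus-diagonalization construction, including the construction of $C^1$ approximants vanishing on $\o$, is therefore unnecessary: the paper's proof of (c) is one line.
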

\begin{proof}
Part \emph{\ref{item:Gammaa})}.
By Lemma \ref{le:Fdcoercivity}, the set $\{ \left\| (\vec u, \vec P) \right\|_{\mc{T}_{\d}}^2 \}_{\d>0}$ is bounded.
We then apply Proposition \ref{prop:compactness} to find the existence of $\vec u$, and the boundedness of $\{ \vec P_{\d} \}_{\d>0}$ in $L^2 (\O, \Rnnsd)$ for the existence of $\vec P$.

Part \emph{\ref{item:Gammab})}.
Clearly,
\[
 \| \vec P \|_2^2 \leq \liminf_{\d \to 0} \| \vec P_{\d} \|_2^2 \quad \text{and} \quad \lim_{\d \to 0} \int_{\O} \vec b (\vec x) \cdot \vec u_{\d} (\vec x) \, \dd \vec x = \int_{\O} \vec b (\vec x) \cdot \vec u (\vec x) \, \dd \vec x .
\]
Moreover, as mentioned in Lemma \ref{le:Eddiv}, it was proved in \cite[Lemma 3.6]{MeDu15} that $\mf{D}_{\d} (\vec u_{\d}) \weakc \div \vec u$ in $L^2 (\O)$ as $\d \to 0$, so  $\| \div \vec u \|_2^2 \leq \liminf_{\d} \| \mf{D}_{\d} (\vec u_{\d}) \|_2^2$.
Hence, we are left to the analysis of the remaining term.

Let $\{ \f_r \}_{r>0}$ be the family of mollifiers defined in Appendix \ref{se:auxiliary}.
Let $A \ssubset \O$ be a Lipschitz domain and let $0 < r < \dist (A, \p \O)$.
By Lemma \ref{le:Jensen},
\begin{equation}\label{eq:lb1}
\begin{split}
 & \int_A \int_A \r_{\d} (\vec x {-} \vec x') \left( \mc{E} (\f_r \star \vec u_{\d}, \f_r \star \vec P_{\d}) (\vec x, \vec x') - \frac{1}{n} \f_r \star \mf{E}_{\d} (\vec u_{\d}, \vec P_{\d}) (\vec x) \right)^2 \dd \vec x' \, \dd \vec x \\
 \leq & \int_{\O} \int_{\O} \r_{\d} (\vec x {-} \vec x') \left( \mc{E} (\vec u_{\d}, \vec P_{\d}) (\vec x, \vec x') - \frac{1}{n} \mf{E}_{\d} (\vec u_{\d}, \vec P_{\d}) (\vec x) \right)^2 \dd \vec x' \, \dd \vec x .
\end{split}
\end{equation}
Call $\vec u_r = \f_r \star \vec u$ and $\vec P_r = \f_r \star \vec P$.
Standard properties of mollifiers show that $\f_r \star \vec u_{\d} \to \vec u_r$ in $C^1 (\bar{A}, \Rn)$ and $\f_r \star \vec P_{\d, r} \to \vec P_r$ in $C (\bar{A}, \Rnnsd)$ as $\d \to 0$.
Using also Lemma \ref{le:Eddiv}, we find that $\f_r \star \mf{E}_{\d} (\vec u_{\d}, \vec P_{\d}) \to \div \vec u_r$ in $C (\bar{A})$ as $\d \to 0$.
Thus, letting $\d \to 0$ in \eqref{eq:lb1} and using Lemma \ref{le:limitFdupNEW}, we obtain
\begin{equation}\label{eq:lb2}
\begin{split}
 & n \int_A \dashint_{\Sn} \left( \left( \nabla \vec u_r (\vec x) - \vec P_r (\vec x) \right) \vec z \cdot \vec z - \frac{1}{n} \div \vec u_r (\vec x) \right)^2 \dd \Hn (\vec z) \, \dd \vec x \\
 \leq & \liminf_{\d \to 0} \int_{\O} \int_{\O} \r_{\d} (\vec x' {-} \vec x) \left( \mc{E} (\vec u_{\d}, \vec P_{\d}) (\vec x, \vec x') - \frac{1}{n} \mf{E}_{\d} (\vec u_{\d}, \vec P_{\d}) (\vec x) \right)^2 \dd \vec x' \, \dd \vec x .
\end{split}
\end{equation}
Again, standard properties of mollifiers show that $\nabla \vec u_r \to \nabla \vec u$ in $L^2 (A, \Rnn)$ and a.e., and $\vec P_r \to \vec P$ in $L^2 (A, \Rnnsd)$ and a.e., as $r \to 0$.
We then let $r\to 0$ and apply dominated convergence in \eqref{eq:lb2} to get 
\begin{equation}\label{eq:lb3}
\begin{split}
 & n \int_A \dashint_{\Sn} \left( (\nabla \vec u (\vec x) - \vec P (\vec x)) \vec z \cdot \vec z - \frac{1}{n} \div \vec u (\vec x) \right)^2 \, \dd \Hn (\vec z) \, \dd \vec x \\
 \leq & \liminf_{\d \to 0} \int_{\O} \int_{\O} \r_{\d} (\vec x' {-} \vec x) \left( \mc{E} (\vec u_{\d}, \vec P_{\d}) (\vec x, \vec x') - \frac{1}{n} \mf{E}_{\d} (\vec u_{\d}, \vec P_{\d}) (\vec x) \right)^2 \dd \vec x' \, \dd \vec x .
\end{split}
\end{equation}
Finally, we send $A \nearrow \O$ and use monotone convergence in
\eqref{eq:lb3} to obtain
\begin{align*}
 & n \int_{\O} \dashint_{\Sn} \left( (\nabla \vec u (\vec x) - \vec P (\vec x)) \vec z \cdot \vec z - \frac{1}{n} \div \vec u (\vec x) \right)^2 \, \dd \Hn (\vec z) \, \dd \vec x \\
 \leq & \liminf_{\d \to 0} \int_{\O} \int_{\O} \r_{\d} (\vec x' {-} \vec x) \left( \mc{E} (\vec u_{\d}, \vec P_{\d}) (\vec x, \vec x') - \frac{1}{n} \mf{E}_{\d} (\vec u_{\d}, \vec P_{\d}) (\vec x) \right)^2 \dd \vec x' \, \dd \vec x .
\end{align*}

Part \emph{\ref{item:Gammac})}.
This follows from Proposition \ref{prop:limitFdup} by taking $(\vec u_{\d}, \vec P_{\d}) = (\vec u, \vec P)$. 
\end{proof}

 We are now ready to present the small-horizon convergence result
for the incremental problem.

\begin{corollary}[Convergence to the local incremental problem]
  Let %$V$ be a weakly closed subset of $L^2 (\O, \Rn)$ such that $V
      %\cap \mc{R} = \{ \vec 0 \}$ and 
$\vec P_{\rm old} \in L^2 (\O,
  \Rnnsd)$ be given and  $(\vec u_{\d}, \vec P_{\d})$ be the solution of the
  nonlocal incremental problem \eqref{eq:incremental}. Then $(\vec u_{\d},
  \vec P_{\d}) \to (\vec u_0,\vec P_0)$ with the respect to the strong
  $\times$ weak topology in $Q$,
  where $ (\vec u_0,\vec P_0) \in (V \cap H^1(\Omega,\Rn)) \times  L^2(\O, \Rnnsd)$ is the solution of the local
  incremental problem \eqref{eq:incremental_local}.
\end{corollary}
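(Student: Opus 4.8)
The plan is to deduce the corollary from the $\G$-convergence $F_{\d}\to F_0$ of Theorem~\ref{thm:gamma} by the standard scheme ``$\G$-convergence plus equicoercivity $\Rightarrow$ convergence of minimizers'', with the dissipation $H$ entering as a stable perturbation. Throughout I write $G_{\d}(\vec u,\vec P)=F_{\d}(\vec u,\vec P)+H(\vec P{-}\vec P_{\rm old})$ and $G_0(\vec u,\vec P)=F_0(\vec u,\vec P)+H(\vec P{-}\vec P_{\rm old})$, so that $(\vec u_{\d},\vec P_{\d})$ minimizes $G_{\d}$ and $(\vec u_0,\vec P_0)$ minimizes $G_0$ over $Q$ (existence and uniqueness being guaranteed by Theorem~\ref{th:existence} and by the remark following \eqref{eq:incremental_local}).

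First I would establish a $\d$-independent a priori bound: since $(\vec 0,\vec 0)\in Q$, $F_{\d}(\vec 0,\vec 0)=0$ and $H(-\vec P_{\rm old})=\s_y\|\vec P_{\rm old}\|_1$, minimality gives $G_{\d}(\vec u_{\d},\vec P_{\d})\le\s_y\|\vec P_{\rm old}\|_1$ for every $\d>0$, and hence $\sup_{\d}F_{\d}(\vec u_{\d},\vec P_{\d})<\infty$ because $H\ge 0$. (Equivalently, one may use the uniform coercivity of Lemma~\ref{le:Fdcoercivity} to get $\sup_{0<\d<\d_0}\|(\vec u_{\d},\vec P_{\d})\|_{\mc{T}_{\d}}<\infty$.) Then Theorem~\ref{thm:gamma}.\ref{item:Gammaa}) yields a subsequence $\d_j\to 0$ and $(\vec u_*,\vec P_*)\in H^1(\O,\Rn)\times L^2(\O,\Rnnsd)$ with $\vec u_{\d_j}\to\vec u_*$ in $L^2(\O,\Rn)$ and $\vec P_{\d_j}\weakc\vec P_*$ in $L^2(\O,\Rnnsd)$; since $V$ is a closed subspace of $L^2(\O,\Rn)$ and each $\vec u_{\d_j}\in V$, also $\vec u_*\in V$ (and $\vec P_*$ stays symmetric and trace-free, $L^2(\O,\Rnnsd)$ being weakly closed).

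Next I would identify $(\vec u_*,\vec P_*)$ as the local minimizer. The $\liminf$ inequality Theorem~\ref{thm:gamma}.\ref{item:Gammab}), together with the weak lower semicontinuity on $L^2(\O,\Rnnsd)$ of the convex, strongly continuous functional $\vec P\mapsto H(\vec P{-}\vec P_{\rm old})=\s_y\|\vec P{-}\vec P_{\rm old}\|_1$, gives
\[
 G_0(\vec u_*,\vec P_*)\le\liminf_{j\to\infty}G_{\d_j}(\vec u_{\d_j},\vec P_{\d_j}).
\]
For the reverse comparison, fix any $(\vec v,\vec Q)\in(V\cap H^1(\O,\Rn))\times L^2(\O,\Rnnsd)$: the recovery sequence of Theorem~\ref{thm:gamma}.\ref{item:Gammac}) is $(\vec v,\vec Q)$ itself, constant in $\d$, so its dissipation term equals $H(\vec Q{-}\vec P_{\rm old})$ for every $\d$ and hence $G_{\d}(\vec v,\vec Q)\to G_0(\vec v,\vec Q)$ (recall $\vec v\in H^1\Rightarrow\vec v\in\mc{S}_{\d}$ by Lemma~\ref{le:SrW12}, so $G_\d(\vec v,\vec Q)<\infty$). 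Minimality of $(\vec u_{\d_j},\vec P_{\d_j})$ over $Q$ gives $G_{\d_j}(\vec u_{\d_j},\vec P_{\d_j})\le G_{\d_j}(\vec v,\vec Q)$, and combining the three estimates yields $G_0(\vec u_*,\vec P_*)\le G_0(\vec v,\vec Q)$. This inequality is trivial for $(\vec v,\vec Q)$ outside $(V\cap H^1)\times L^2$, where $G_0=\infty$, so $(\vec u_*,\vec P_*)$ minimizes $G_0$ over all of $Q$, whence $(\vec u_*,\vec P_*)=(\vec u_0,\vec P_0)$ by uniqueness. As the limit does not depend on the subsequence, a routine subsequence argument upgrades this to convergence of the whole family: $\vec u_{\d}\to\vec u_0$ in $L^2(\O,\Rn)$ and $\vec P_{\d}\weakc\vec P_0$ in $L^2(\O,\Rnnsd)$, i.e.\ convergence in the strong $\times$ weak topology of $Q$.

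I do not expect a genuine obstacle here: the analytic substance is entirely contained in Theorem~\ref{thm:gamma} (and, through it, in Lemma~\ref{le:Fdcoercivity} and Proposition~\ref{prop:compactness}), while the perturbation $H$ is harmless precisely because the recovery sequence of Theorem~\ref{thm:gamma}.\ref{item:Gammac}) is constant in the plastic variable — so no separate ``mutual recovery'' construction for $H$ is needed — and $H$ is weakly lower semicontinuous for the $\liminf$ side. The only points demanding a little care are the order of operations (the $\G$-$\liminf$ estimate must be invoked on the converging sequence produced by the compactness step, not before) and the bookkeeping that the weak $L^2$-limit $\vec P_*$ remains in $\Rnnsd$ and that $\vec u_*\in V$, both of which are automatic from weak closedness of the relevant subspaces.
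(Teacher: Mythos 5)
Your proposal is correct and follows essentially the same route as the paper: a uniform bound on $F_{\d}(\vec u_{\d},\vec P_{\d})$ from minimality, compactness and the $\Gamma$-convergence of Theorem \ref{thm:gamma} with the dissipation $H$ handled as a $\d$-independent, lower semicontinuous perturbation whose recovery sequence is constant, and strict convexity/uniqueness of the local minimizer to identify the limit and upgrade to the whole family. The only (harmless) difference is that you obtain the a priori bound by testing with $(\vec 0,\vec 0)$, whereas the paper compares with $(\vec u_0,\vec P_0)$ and invokes the pointwise convergence of Proposition \ref{prop:limitFdup}.
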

\begin{proof}
  For each $\d>0$ we have
  \[
 F_{\d} (\vec u_{\d}, \vec P_{\d}) + H(\vec P_{\d} - \vec P_{\rm old}) \leq F_{\d} (\vec u_0, \vec P_0) + H(\vec P_0 - \vec P_{\rm old}) ,
\]
so by Proposition \ref{prop:limitFdup}, $\sup_{\d>0} F_{\d} (\vec u_{\d}, \vec P_{\d}) < \infty$.
By Theorem \ref{thm:gamma}, the sequence $(\vec u_{\d}, \vec P_{\d})$ is precompact in the strong $\times$ weak topology in $Q$.
Thus, one is left to prove the
  $\Gamma$-convergence of $F_{\d} + H(\cdot {-}\vec P_{\rm old})$ as
  $\d\to 0$. The $\Gamma$-$\liminf$ follows from the $\Gamma$-convergence of
  $F_{\d}$ in Theorem \ref{thm:gamma} as $H$
  is independent of $\d$ and lower semicontinuous. The existence of a
  recovery sequence follows by pointwise convergence: see Proposition \ref{prop:limitFdup}.
\end{proof}

%%%%%%%%%%%%%%%%%%%%%%%%%%%%%%%%%%%%%%%%%%%%%%%%%%%%%%%%%%%%%%%%%%%%%%%%%%%%%%%%%%%%%%%%%%%%%%%%%%%%%%%%%%%%%%%
\section{ Quasistatic evolution}\label{se:quasistatic}

Assume now that the body force $\vec b $ depends on time, namely let  $\vec
b \in W^{1,1}(0,T;L^2(\O;\Rn))$. Correspondingly, without introducing
new notation, we indicate the time-dependent (complementary) energy of
the medium via
$F_\r: Q\times [0,T] \to \R\cup \{\infty\}$
given by
\begin{align*}
F_{\r} (\vec u, \vec P,t) = & \b \int_{\O} \mf{D}_{\r} (\vec u) (\vec x)^2 \, \dd \vec x + \a \int_{\O} \int_{\O} \r (\vec x' {-} \vec x) \left( \mc{E} (\vec u, \vec P) (\vec x, \vec x') - \frac{1}{n} \mf{E}_{\r} (\vec u, \vec P) (\vec x) \right)^2 \dd \vec x' \, \dd \vec x \\
 & +
 \g \int_{\O} |\vec P (\vec x)|^2 \, \dd \vec x - \int_{\O} \vec b (\vec x,t) \cdot \vec u (\vec x) \, \dd \vec x .
\end{align*}
Note that boundary conditions could be taken to be time dependent as
well by letting $\vec u - \vec u_{\rm Dir}(t) \in V$ where $ \vec
u_{\rm Dir}(t)$ is given. This would originate an additional time-dependent
linear term in the energy. We, however, stick to the time-independent condition $\vec u \in V$, for the sake of simplicity.

The quasistatic
elastoplastic evolution of the medium \eqref{var100}--\eqref{var200}
can be then specified as 
\begin{align}
  \partial_{\vec u} F_\r (\vec u(t), \vec P(t),t) &= \vec 0 \ \  
  \ \text{in} \ \ \mc{S}_\r^*, \label{var1}\\
\partial_{\dot{\vec P}} H(\dot{\vec P}(t)) + \partial_{\vec P} F_\r
  (\vec u(t), \vec P(t),t) &\ni \vec 0  \ \  
  \ \text{in} \ \ L^2(\Omega;\Rnnsd).  \label{var2}
\end{align}
We have denoted by $\mc{S}_\r^*$ the dual of $\mc{S}_\r$.
In particular, relation \eqref{var2} is a pointwise-in-time
inclusion in $L^2(\O,\Rnnsd)$.

System \eqref{var1}--\eqref{var2} can be made more
explicit by introducing the bilinear form $B_\r$
associated to the quadratic part of $F_\r$, namely,
\begin{align*}
  & B_\r((\vec u, \vec P), (\vec v, \vec Q))= 
\beta \int_\O \mf{D}_\d  (\vec u)(\vec x) \, \mf{D}_\d (\vec v)(\vec x)
\, \dd \vec x \nonumber\\
&+\alpha \int_\O\int_\O \r(\vec x {-} \vec x') \left(\mc{E}(\vec u, \vec P)(\vec
x, \vec x') - \frac{1}{n} \mf{E}_\r(\vec u, \vec P)(\vec x)\right) \left(\mc{E}(\vec v, \vec Q)(\vec
x, \vec x') - \frac{1}{n} \mf{E}_\r(\vec v ,\vec Q)(\vec x)\right) \,
\dd \vec x' \, \dd \vec x\nonumber\\
& +
\gamma \int_A \vec P (\vec x) : \vec Q (\vec x) \, \dd \vec x. %\label{eq:A}
\end{align*}
Making use of $B_\r$ one can equivalently rewrite
\eqref{var1}--\eqref{var2} as the nonlocal system
\begin{align*}
&2B_\r((\vec u(t), \vec P(t)), (\vec v, \vec 0)) = \into \vec b(\vec x,t)\cdot
  \vec v(\vec x) \, \dd \vec x \quad \forall \vec v \in
  \mc{S}_\r, \\%\label{var11}\\
&  2B_\r((\vec u(t), \vec P(t)), (\vec 0, \dot{\vec P}(t)-\vec w) )
  \leq \into  \sigma_y |\vec w(\vec x)|\, \dd \vec x  - \into  \sigma_y |\dot{\vec P}(\vec x,t)| \, \dd
  \vec x \quad \forall \vec w \in L^2(\Omega; \Rnnsd).
%\label{var21}
\end{align*}

The quasistatic
elastoplastic evolution problem consists in finding a strong (in time)
solution to system \eqref{var1}--\eqref{var2}, starting from the initial
state $ (\ove{\vec u},\ove{\vec P})\in
(V \cap \mc{S}_{\r}) \times L^2(\O, \Rnnsd)$. We equivalently
reformulate the problem in energetic terms as that of finding {\it quasistatic evolution} trajectories $(\vec u_{\r}, \vec
p_{\r}): [0,T] \to Q  $ such
that, for all  $t \in [0,T]$,
\begin{align}
   &\vec u_\r(t) \in \mc{S}_\r \ \ \text{and} \ \ 
F_{\r}(\vec u_{\r}(t), \vec P_{\r}(t),t) \leq F_{\r}(\haz{\vec u}, \haz{\vec 
   p},t) + H(\haz{\vec P}{-}\vec P_{\r}(t)) \quad \forall (\haz{\vec u}, \haz{\vec P})
   \in Q,\label{eq:stability}\\
&F_{\r}(\vec u_{\r}(t), \vec P_{\r}(t),t) + {\rm Diss}_{[0,t]}(\vec P_\r)
= F_{\r}( \vec u_{\r}(0), \vec P_{\r}(0),0) -\int_0^t \int_\O \dot{\vec b}(\vec
x,s)\cdot \vec u_\r(\vec x,s)\, \dd \vec x\, \dd s\label{eq:energy_balance}
\end{align}
where the dissipation ${\rm Diss}_{[0,t]}(\vec P_\r)$ is defined as 
$${\rm Diss}_{[0,t]}(\vec P_\r)=\sup\left\{ \sum_{i=1}^N H(\vec
  P_{\r}(t_{i-1}){-} \vec P_{\r}(t_i)) \right\}$$
and the supremum is taken on all partitions $\{0=t_0<t_1< \dots <t_N
=t\}$ of $[0,t]$. The time-parametrized variational inequality
\eqref{eq:stability} is usually called {\it global stability}. It
expresses a minimality of the current state $(\vec u_{\r}(t), \vec
P_{\r}(t))$ with respect to possible competitors $(\haz{\vec u},
\haz{\vec P})$ when the combined effect of energy and dissipation is
taken into account. We will call all states $(\vec u_{\r}(t), \vec
P_{\r}(t))$  fulfilling \eqref{eq:stability} {\it stable} and
equivalently indicate \eqref{eq:stability} as $(\vec u_{\r}(t), \vec
P_{\r}(t)) \in \mf{S}_{\r}(t)$, so that $\mf{S}_{\r}(t)$ is the set of
{\it stable states} at time $t$.
The scalar relation \eqref{eq:energy_balance} is nothing but the
energy balance: The sum of the actual and the dissipated energy
(left-hand side of \eqref{eq:energy_balance}) equals the sum of the
initial energy and the work done by external actions (right-hand side). 
Note that systems \eqref{var1}--\eqref{var2} and
\eqref{eq:stability}--\eqref{eq:energy_balance} are equivalent as the
energy $F_\r$ is strictly convex (see Proposition \ref{prop:strict}).

This section is devoted to the study of the quasistatic evolution
problem \eqref{eq:stability}--\eqref{eq:energy_balance}.  
In particular,
we prove that it is well posed in Subsection
\ref{se:well_posedness_quasistatic} by passing to the limit into a
time-discretization discussed in Subsection
\ref{se:time_discrete}. Eventually, we study the localization limit as $\r$
converges to a Dirac delta  function at  $\vec 0$ in Subsection \ref{se:local_limit_quasi}

\subsection{Incremental minimization}\label{se:time_discrete}

For the sake of notational simplicity, we drop the subscript $\r$ from
$(\vec u_{\r}, \vec P_{\r})$ in this subsection.
Let a partition $\{0=t_0<t_1< \dots <t_N
=T\}$ of $[0,T]$ be given and let $(\vec u_0,\vec P_0) = (\ove{\vec u},\ove{\vec P})$.  The {\it
  incremental minimization problem} consists in finding $(\vec
u_i,\vec P_i) \in Q$
that minimizes 
\begin{equation}
  \label{eq:incremental_i}
F_\r(\vec u, \vec P, t_i) + H(\vec P{-} \vec P_{i-1})  
\end{equation}
for $i=1, \dots, N$. Owing to Theorem \ref{th:existence}, the unique solution $\{(\vec
u_i,\vec P_i)\}_{i=0}^N$ can be found inductively on $i$. The minimality
in \eqref{eq:incremental_i} and the triangle inequality entail that 
\begin{equation}
F_\r(\vec u_i, \vec P_i, t_i) + H(\vec P_i{-} \vec P_{i-1}) \leq
F_\r(\haz{\vec u}, \haz{\vec P}, t_i) + H( \haz{\vec P}{-} \vec
P_{i})+H(\vec P_i{-} \vec P_{i-1})\quad  \forall (\haz{\vec u}, \haz{\vec P})
   \in Q.\label{eq:stab}
\end{equation}
This proves in particular that $(\vec
u_i,\vec P_i)$ is stable for all $i$. More precisely, $(\vec
u_i,\vec P_i) \in \mf{S}_{\r}(t_i)$ for all $i=1, \dots,N$. Again from
minimality one has
\begin{equation}\label{eq:Gron1}
F_\r(\vec u_i, \vec P_i, t_i) + H(\vec P_i{-} \vec P_{i-1}) \leq
F_\r(\vec u_{i-1}, \vec P_{i-1}, t_i)  
=F_\r(\vec u_{i-1}, \vec P_{i-1}, t_{i-1}) - \int_\O \int_{t_{i-1}}^{t_i} \dot{\vec b}(\vec x,s)\, \dd s \cdot \vec u_{i-1}(\vec x) \, \dd \vec x. 
\end{equation}
Now, the coercivity of $F_\r$ from Lemma \ref{le:Fcoerc} implies the existence of $M>0$ such that
\[
 \|(\vec u, \vec P)\|_{\mc{T}_{\r}} \leq M \left( 1 + F_{\r} (\vec u, \vec P) \right) , \qquad \forall (\vec u, \vec P) \in Q .
\]
This and  Minkowski's inequality imply
\begin{equation}\label{eq:Gron2}
\begin{split}
 \int_\O \int_{t_{i-1}}^{t_i} \dot{\vec b}(\vec x,s)\, \dd s \cdot \vec u_{i-1}(\vec x) \, \dd \vec x & \leq  \Bigl\| \int_{t_{i-1}}^{t_i} \dot{\vec b}(\cdot,s)\, \dd s \Bigr\|_{L^2 (\O)} \left\| \vec u_{i-1} \right\|_{L^2 (\O)} \\
 & \leq M \int_{t_{i-1}}^{t_i} \bigl\| \dot{\vec b}(\cdot,s) \bigr\|_{L^2 (\O)} \, \dd s \left( 1 + F_{\r} (\vec u_{i-1}, \vec P_{i-1}, t_{i-1}) \right) .
 \end{split}
\end{equation}
Fix an integer $m\leq N$; by summing \eqref{eq:Gron1} up for $i=1,\dots,m$ we get
\begin{equation}\label{eq:uppertrue}
\begin{split}
F_\r(\vec u_m, \vec P_m, t_m) + \sum_{i=1}^m H(\vec P_i{-} \vec P_{i-1}) \leq &
\, F_\r(\ove{\vec u}, \ove{\vec P},0) - \sum_{i=1}^m \int_\O \int_{t_{i-1}}^{t_i} \dot{\vec b}(\vec x,s)\, \dd s \cdot \vec u_{i-1}(\vec x) \, \dd \vec x ,
 \end{split}
\end{equation}
while using \eqref{eq:Gron2} we get 
\begin{equation*}
\begin{split}
F_\r(\vec u_m, \vec P_m, t_m) + \sum_{i=1}^mH(\vec P_i{-} \vec P_{i-1}) \leq &
\, F_\r(\ove{\vec u}, \ove{\vec P},0) + M \, \| \dot{\vec b} \|_{L^1(0,T;L^2(\O;\Rn))} \\
& +M \sum_{i=1}^m \int_{t_{i-1}}^{t_i} \bigl\| \dot{\vec b}(\cdot,s) \bigr\|_{L^2 (\O)} \, \dd s \, F_{\r} (\vec u_{i-1}, \vec P_{i-1}, t_{i-1}) .
 \end{split}
\end{equation*}
With the discrete Gronwall inequality we deduce that 
\begin{equation}
  \label{eq:bound}
  F_\r(\vec u_m, \vec P_m, t_m) + \sum_{i=1}^m H(\vec P_i{-}\vec
  P_{i-1}) \leq C
\end{equation}
where $C$ depends on $F_\r(\ove{\vec u}, \ove{\vec P},0) $ and $\|
\dot{\vec b} \|_{L^1(0,T;L^2(\O;\Rn))}$ but not on the time
partition. In particular, the incremental minimization problem
delivers a stable approximation scheme. This could additionally be
combined with a space discretization as well.

\subsection{Well-posedness of the quasistatic evolution
  problem}\label{se:well_posedness_quasistatic}

The aim of this subsection is to check the following well-posedness result.

\begin{theorem}[Well-posedness of the quasistatic evolution
  problem] \label{thm1}
Let %$V$ be a closed subspace of $L^2 (\O, \Rn)$ such that $V \cap
    %\mc{R} = \{ \vec 0 \}$, 
$\vec b \in W^{1,1}(0,T;L^2(\O;\Rn))$ and
$(\ove{\vec u}, \ove{\vec P}) \in \mf{S}_\r(0)$.  
Then there exists a unique quasistatic evolution $t \mapsto (\vec
u_\r(t), \vec P_\r (t))$.
\end{theorem}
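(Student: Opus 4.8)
\emph{Plan.} The proof is by the time-incremental (Rothe) scheme typical of rate-independent systems (cf.\ \cite{MRS}): interpolate the incremental minimizers of Subsection~\ref{se:time_discrete}, extract a limit trajectory by a generalized Helly selection, verify global stability \eqref{eq:stability} and the energy balance \eqref{eq:energy_balance} for the limit, and deduce uniqueness from the quadratic structure of $F_\r$. Throughout, the coercivity of $F_\r$ (Lemma~\ref{le:Fcoerc}) is used with a constant independent of $t$, which is legitimate since $\vec b\in W^{1,1}(0,T;L^2(\O;\Rn))\hookrightarrow C([0,T];L^2(\O;\Rn))$, so $\sup_t\|\vec b(\cdot,t)\|_2<\infty$. \emph{Step 1 (discretization and a priori bounds).} For each $k\in\N$ pick a partition of $[0,T]$ with vanishing mesh and let $\{(\vec u_i^k,\vec P_i^k)\}_i$ be the incremental minimizers of Theorem~\ref{th:existence}, with $(\vec u_0^k,\vec P_0^k)=(\ove{\vec u},\ove{\vec P})$. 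Let $\bar{\vec u}_k,\bar{\vec P}_k\colon[0,T]\to Q$ be the right-continuous piecewise-constant interpolants, $\tilde{\vec u}_k$ the left-continuous one, and $\bar t_k(\cdot)$ the node map. By \eqref{eq:stab}, $(\bar{\vec u}_k(t),\bar{\vec P}_k(t))\in\mf{S}_\r(\bar t_k(t))$ for all $t$; by the discrete Gronwall bound \eqref{eq:bound} and Lemma~\ref{le:Fcoerc}, $\sup_k\sup_t\|(\bar{\vec u}_k(t),\bar{\vec P}_k(t))\|_{\mc{T}_\r}<\infty$ and $\sup_k\Diss_{[0,T]}(\bar{\vec P}_k)<\infty$. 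Hence $\{\bar{\vec P}_k\}$ is bounded in $L^\infty(0,T;L^2(\O,\Rnnsd))$ with uniformly bounded variation with respect to the $L^1(\O,\Rnnsd)$-norm underlying $H$.

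\emph{Step 2 (selection of the limit).} By a generalized Helly selection principle for $BV$ maps into a reflexive space (see \cite{MRS}), along a subsequence there is a $BV$ map $\vec P\colon[0,T]\to L^2(\O,\Rnnsd)$ with $\bar{\vec P}_k(t)\weakc\vec P(t)$ in $L^2(\O,\Rnnsd)$ for \emph{every} $t$ and $\Diss_{[0,t]}(\vec P)\leq\liminf_k\Diss_{[0,t]}(\bar{\vec P}_k)$. Since $B_\r$ is time-independent, the stability of $(\bar{\vec u}_k(t),\bar{\vec P}_k(t))$ means that $\bar{\vec u}_k(t)$ minimizes $\haz{\vec u}\mapsto F_\r(\haz{\vec u},\bar{\vec P}_k(t),\bar t_k(t))$ over $V\cap\mc{S}_\r$, i.e.\ $\bar{\vec u}_k(t)=\mc{L}\,\bar{\vec P}_k(t)+\vec g(\bar t_k(t))$ where $\mc{L}\colon L^2(\O,\Rnnsd)\to V\cap\mc{S}_\r$ is the fixed bounded linear operator determined by $2B_\r((\mc{L}\vec P,\vec P),(\vec v,\vec 0))+\tfrac12\cdots=0$ and $\vec g\in C([0,T];V\cap\mc{S}_\r)$ carries the load term (existence, uniqueness and these formulas follow from Lemma~\ref{le:Fcoerc}, Proposition~\ref{pr:Korn} and Proposition~\ref{prop:strict}). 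As $\mc{L}$ is weak--weak continuous and $\vec g$ continuous, $\bar{\vec u}_k(t)\weakc\vec u(t):=\mc{L}\,\vec P(t)+\vec g(t)$ in $\mc{S}_\r$ for every $t$ (and $\tilde{\vec u}_k(s)\weakc\vec u(s)$ for a.e.\ $s$), and $\vec u(t)$ is precisely the unique minimizer of $F_\r(\cdot,\vec P(t),t)$ over $V\cap\mc{S}_\r$. Finally $(\ove{\vec u},\ove{\vec P})\in\mf{S}_\r(0)$ forces $\ove{\vec u}$ to minimize $F_\r(\cdot,\ove{\vec P},0)$, so $(\vec u(0),\vec P(0))=(\ove{\vec u},\ove{\vec P})$.

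\emph{Step 3 (stability and energy balance of the limit).} Fix $t$ and a competitor $(\haz{\vec u},\haz{\vec P})\in Q$; we may assume $\haz{\vec u}\in V\cap\mc{S}_\r$, else \eqref{eq:stability} is trivial. Put $\vec w:=\haz{\vec u}-\vec u(t)\in V\cap\mc{S}_\r$, $\vec W:=\haz{\vec P}-\vec P(t)$, and test the stability of $(\bar{\vec u}_k(t),\bar{\vec P}_k(t))$ with the shifted competitor $(\bar{\vec u}_k(t)+\vec w,\bar{\vec P}_k(t)+\vec W)$: this is admissible and, $H$ being translation invariant, its dissipation is exactly $H(\vec W)$. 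Expanding the quadratic $F_\r$ and cancelling $F_\r(\bar{\vec u}_k(t),\bar{\vec P}_k(t),\bar t_k(t))$ from both sides, stability becomes
\[
 0\leq 2B_\r\big((\bar{\vec u}_k(t),\bar{\vec P}_k(t)),(\vec w,\vec W)\big)+B_\r\big((\vec w,\vec W),(\vec w,\vec W)\big)-\int_\O\vec b(\vec x,\bar t_k(t))\cdot\vec w(\vec x)\,\dd\vec x+H(\vec W).
\]
As $B_\r(\cdot,(\vec w,\vec W))$ is a bounded linear functional on $\mc{T}_\r$, $(\bar{\vec u}_k(t),\bar{\vec P}_k(t))\weakc(\vec u(t),\vec P(t))$ there (Step~2, Lemma~\ref{le:Tr}), and $\vec b(\cdot,\bar t_k(t))\to\vec b(\cdot,t)$ strongly, letting $k\to\infty$ gives $0\leq F_\r(\haz{\vec u},\haz{\vec P},t)-F_\r(\vec u(t),\vec P(t),t)+H(\haz{\vec P}-\vec P(t))$, i.e.\ $(\vec u(t),\vec P(t))\in\mf{S}_\r(t)$; here the quadraticity of $F_\r$ and the $1$-homogeneity of $H$ are essential, as the ``shift'' recovery sequence makes all error terms converge without any (unavailable) continuity of $F_\r$ or $H$ under weak convergence of $\vec P$. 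For the energy balance, passing to the $\liminf$ in \eqref{eq:uppertrue} with weak lower semicontinuity of $F_\r$ and of $\Diss$, together with $\int_0^t\!\int_\O\dot{\vec b}\cdot\tilde{\vec u}_k\,\dd\vec x\,\dd s\to\int_0^t\!\int_\O\dot{\vec b}\cdot\vec u\,\dd\vec x\,\dd s$ (dominated convergence, since $\tilde{\vec u}_k(\cdot,s)$ is uniformly $L^2$-bounded and $\tilde{\vec u}_k(\cdot,s)\weakc\vec u(\cdot,s)$), gives the ``$\leq$'' in \eqref{eq:energy_balance}. The reverse ``$\geq$'' is the classical consequence of global stability: evaluating \eqref{eq:stability} at $s_{j-1}$ against $(\vec u(s_j),\vec P(s_j))$ along a partition of $[0,t]$, telescoping and refining yields $F_\r(\ove{\vec u},\ove{\vec P},0)-\int_0^t\!\int_\O\dot{\vec b}\cdot\vec u\,\dd\vec x\,\dd s\leq F_\r(\vec u(t),\vec P(t),t)+\Diss_{[0,t]}(\vec P)$, the Riemann sums converging because $\vec u$ is $L^2$-bounded and regulated in time (see \cite{MRS}). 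Thus \eqref{eq:energy_balance} holds and $(\vec u_\r,\vec P_\r):=(\vec u,\vec P)$ is a quasistatic evolution.

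\emph{Step 4 (uniqueness, and the main obstacle).} By Proposition~\ref{prop:strict}, \eqref{eq:stability}--\eqref{eq:energy_balance} is equivalent to \eqref{var1}--\eqref{var2}. Because $F_\r$ is quadratic, for given $(\vec P,t)$ equation \eqref{var1} is a linear elliptic problem whose unique solution $\vec u=\mc{L}\vec P+\vec g(t)$ is affine in $\vec P$; inserting it into \eqref{var2} and using the envelope identity $\partial_{\vec P}F_\r(\mc{L}\vec P+\vec g(t),\vec P,t)=\mathbb{A}\vec P-\vecg\ell(t)$, with $\mathbb{A}$ a bounded self-adjoint operator on $L^2(\O,\Rnnsd)$ that is positive definite (from $\g>0$ and nonnegativity of the elastic part of $F_\r$) and $\vecg\ell\in W^{1,1}(0,T;L^2(\O,\Rnnsd))$, the system reduces to the sweeping process
\[
 \partial H(\dot{\vec P}(t))+\mathbb{A}\,\vec P(t)\ni\vecg\ell(t)\quad\text{in }L^2(\O,\Rnnsd),\qquad \vec P(0)=\ove{\vec P}.
\]
The theory of \cite{MRS} gives the time regularity $\vec P\in W^{1,1}(0,T;L^2(\O,\Rnnsd))$, and then uniqueness is the standard monotonicity estimate: for two solutions $\vec P_1,\vec P_2$ with the same datum, monotonicity of $\partial H$ yields $\langle\mathbb{A}(\vec P_1-\vec P_2),\dot{\vec P}_1-\dot{\vec P}_2\rangle\leq0$ a.e., so $t\mapsto\langle\mathbb{A}(\vec P_1(t)-\vec P_2(t)),\vec P_1(t)-\vec P_2(t)\rangle$ is non-increasing and vanishes at $t=0$; positive definiteness of $\mathbb{A}$ forces $\vec P_1\equiv\vec P_2$, hence $\vec u_1\equiv\vec u_2$. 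The genuinely technical points are the generalized Helly selection delivering \emph{pointwise-in-time} weak-$L^2$ convergence of $\bar{\vec P}_k$ (Step~2) and the lower energy estimate from stability (Step~3); by contrast the closedness of stable states, usually the crux in rate-independent problems, is here obtained cheaply through the explicit shifted recovery sequence, thanks precisely to the quadratic energy and the $1$-homogeneous dissipation.
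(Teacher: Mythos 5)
Your proof is correct and follows the same overall architecture as the paper's (time discretization via incremental minimization, Helly-type selection of a $BV$-in-time plastic trajectory, the quadratic-trick recovery sequence for closedness of stable states, lower energy estimate from \eqref{eq:uppertrue}, upper one from global stability). Two points of genuine divergence are worth noting. First, in Step 2 you recover the displacement convergence by writing $\bar{\vec u}_k(t)=\mc{L}\,\bar{\vec P}_k(t)+\vec g(\bar t_k(t))$ through the resolvent operator of the linear elliptic problem \eqref{var1}, which yields pointwise-in-time weak convergence of $\bar{\vec u}_k(t)$ in $\mc{S}_\r$ directly from that of $\bar{\vec P}_k(t)$. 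The paper instead invokes the Helly principle together with Lemma~\ref{le:Fdcoercivity} and Proposition~\ref{prop:compactness} to assert strong $L^2$ convergence of $\vec u_N$; since Proposition~\ref{prop:compactness} is stated for the family $\{\r_\d\}_\d$ with $\d\to0$ and there is no reason for $\mc{S}_\r\hookrightarrow L^2$ to be compact for a \emph{fixed} $\r$ (e.g.\ $\r=n\mathbf{1}_{B(\vec 0,1)}/|B(\vec 0,1)|$ gives $\mc{S}_\r=L^2$), your route is in fact cleaner and self-contained, and weak convergence suffices for every step where the paper uses the limit. Second, where the paper dispatches uniqueness in one line (``a consequence of the strict convexity of $F_\r$''), you actually prove it: eliminating $\vec u$ via $\mc{L}$ reduces \eqref{var1}--\eqref{var2} to a Moreau sweeping process $\partial H(\dot{\vec P})+\mathbb{A}\vec P\ni\vecg\ell(t)$ with $\mathbb{A}$ self-adjoint and positive definite (thanks to $\g>0$), and then the usual monotonicity estimate and the $W^{1,1}$-in-time regularity of solutions force coincidence. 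This is the argument the paper's one-liner implicitly relies on, since strict convexity alone does not yield uniqueness of rate-independent evolutions in general; making it explicit is a real improvement in rigor, at the cost of introducing the reduced operator $\mathbb{A}$ and its regularity theory, which the paper sidesteps by citing \cite{MielkeRoubicek15}.
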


\begin{proof}
  This well-posedness argument is quite standard, for the energy
  $F_\r$ is quadratic and coercive. Indeed, the statement follows from
  \cite[Thm.\ 3.5.2]{MielkeRoubicek15} where one finds
  quasistatic evolutions by passing to the limit in the time-discrete
  solution of the incremental problem \eqref{eq:incremental_i} as the
  fineness of the partition goes to $0$. Assume for simplicity such
  partitions to be uniform and given by $t_i^N= iT/N$ (non-uniform
  partitions can be considered as well) and define $(\vec u_N, \vec
  P_N):[0,T] \to Q$ to be the backward-in-time piecewise constant interpolant of the
  solution of the incremental problem \eqref{eq:incremental_i} on the
  partition.

Bound \eqref{eq:bound} and the coercivity of $F_\r$
  from Lemma \ref{le:Fcoerc} entail that 
$ \| (\vec u_N, \vec P_N)\|_{\mc
{T}_\r}$ and $ {\rm Diss}_{[0,T]}(\vec
P_N) $ are bounded independently of $N$.
This allows for the application of the Helly Selection Principle
\cite[Thm.\ 2.1.24]{MielkeRoubicek15} which, in
combination with Lemma \ref{le:Fdcoercivity} and Proposition \ref{prop:compactness}, entails that 
$(\vec u_N, \vec
  P_N)$ converges to $ (\vec u, \vec P)$ with respect to the
  strong $\times$ weak topology of  $Q$, for all times. 

The global
  stability $(\vec u(t), \vec P(t)) \in \mf{S}_\r(t)$ for all $t \in
  [0,T]$ follows by passing to the $\limsup$ in
  \eqref{eq:stab} by means of the so-called {\it quadratic trick}, see
  \cite[Lem.\ 3.5.3]{MielkeRoubicek15}: let $(\haz{\vec u},\haz{\vec
    P}) \in Q$ be given and define $(\haz{\vec u}_N,\haz{\vec
    P}_N) = (\vec u_N(t_i^N) + \haz{\vec u} - \vec u(t),\vec P_N(t_i^N) + \haz{\vec
    P}-\vec P(t))$.  By using  the short-hand notation $B_\r
  (\vec u, \vec P)$ for $B_\r((\vec u, \vec P), (\vec u, \vec P))$,
   from  the fact that $(\vec u_N(t), \vec P_N(t)) \in
  \mf{S}_{\r} (t_i^N)$ for $t \in (t_{i-1}^N,t_i^N]$ we deduce that 
  \begin{align}
    0 &\leq F_\r(\haz{\vec u}_N,\haz{\vec
    P}_N, t_i^N) - F_{\r}(\vec u_N(t), \vec P_N(t), t_i^N) + H(\haz{\vec
    P}_N{-} \vec P_N(t))\nonumber\\
&= B_\r (\haz{\vec u} - \vec u(t), \haz{\vec P} - \vec P(t)) +   
2B_\r\Big((\vec u_N(t),\vec P_N(t)) ,  (\haz{\vec u} {-} \vec u(t),
\haz{\vec P} {-} \vec P(t))\Big) \nonumber\\
&-\int_\O \vec b(\vec x, t_i^N) \cdot (\haz{\vec u}(\vec x) - \vec u(\vec x, t))
\, \dd \vec x + H(\haz{\vec
    P}{-} \vec P(t)).  \label{eq:thisgoes}
  \end{align} 
 Take now   the limit for $N\to \infty$ in \eqref{eq:thisgoes}
 and obtain 
\begin{align*}
  0 &\leq  B_\r (\haz{\vec u} - \vec u(t), \haz{\vec P} - \vec P(t)) +   
2B_\r\Big((\vec u(t),\vec P(t)) ,  (\haz{\vec u} - \vec u(t),
\haz{\vec P} - \vec P(t))\Big) \nonumber\\
&-\int_\O \vec b(\vec x, t) \cdot (\haz{\vec u}(\vec x) - \vec u(\vec x, t))
\, \dd \vec x + H(\haz{\vec
    P}{-} \vec P(t))\nonumber\\
&=  F_\r (\haz{\vec u}, \haz{\vec P}, t) -
F_\r ({\vec u} (t),\vec P(t), t) + H(\haz{\vec
    P}{-} \vec P(t)).
\end{align*} Since the latter holds for all $(\haz{\vec
  u},\haz{\vec P}) \in Q$, we have proved that  
$(\vec u(t), \vec P(t)) \in \mf{S}_\r(t)$. 

Inequality `$\leq$' in \eqref{eq:energy_balance} follows by passing
  to the $\liminf$ as $N \to \infty$ in \eqref{eq:uppertrue}. The opposite
  inequality is a consequence of the already checked global stability,
  see \cite[Prop.\ 2.1.23]{MielkeRoubicek15}.
Eventually, uniqueness is a consequence of the strict convexity of $F_\r$.
\end{proof}

\subsection{Localization limit}\label{se:local_limit_quasi}

The aim of this subsection is to investigate the localization limit for $\r$
converging to a Dirac delta function at $\vec 0$. Replace $\r$ by $\r_\d$ fulfilling assumptions \eqref{newR}--\eqref{newR2} of
Subsection \ref{se:Glimit} and use $\d$ as subscript instead of $\r$
wherever relevant. Define
$$\mc{S}_0 =\{ \vec u \in H^1(\O,\Rn) \ : \ \vec u =\vec 0 \ \text{on}
\ \omega\}.$$
We shall check that the
quasistatic evolution $(\vec u_\d,\vec P_\d)$ for the nonlocal model converges to the unique
solution $(\vec u_0,\vec P_0)$ of the classical local elastoplastic
quasistatic problem
\begin{align}
  \partial_{\vec u} F_0 (\vec u_0(t), \vec P_0(t),t) &= \vec 0 \ \  
  \ \text{in} \ \ \mc{S}_0^*, \label{var10}\\
\partial_{\dot{\vec P}} H(\dot{\vec P}_0(t)) + \partial_{\vec P} F_0
  (\vec u_0(t), \vec P_0(t),t) &\ni \vec 0  \ \  
  \ \text{in} \ \ L^2(\Omega;\Rnnsd).  \label{var20}
\end{align} 
In analogy with \eqref{var1}--\eqref{var2}, one can rewrite
\eqref{var10}--\eqref{var20} via the bilinear form $B_0$ 
\begin{align*}
 &B_0 \big((\vec u, \vec P), (\vec v, \vec Q)\big) =  \b \int_{\O} \div \vec
                                              u (\vec x) \div \vec v
                                              (\vec x)  \, \dd \vec x
  \\
&+ \a  n \int_{\O} \dashint_{\Sn} \left( (\nabla \vec u (\vec x) - \vec P (\vec x)) \vec z \cdot \vec z - \frac{1}{n} \div \vec u (\vec x) \right) \left( (\nabla \vec v (\vec x) - \vec Q (\vec x)) \vec z \cdot \vec z - \frac{1}{n} \div \vec v (\vec x) \right) \, \dd \Hn (\vec z) \, \dd \vec x \\
 & +
 \g \int_{\O} \vec P (\vec x) : \vec Q(\vec x) \, \dd \vec x
\end{align*}
as
\begin{align}
&2B_0((\vec u_0(t), \vec P_0(t)), (\vec v, \vec 0)) = \into \vec b(\vec
  x,t)\cdot
  \vec v(\vec x) \, \dd \vec x \quad \forall \vec v \in
  \mc{S}_0, \label{var110}\\
& 2B_0((\vec u_0(t), \vec P_0(t)), (\vec 0, \dot{\vec P}_0(t) - \vec w) )
  \leq \into  \sigma_y |\vec w(\vec x)|\, \dd \vec x  - \into  \sigma_y |\dot{\vec P}_0(\vec x,t)| \, \dd
  \vec x \quad \forall \vec w \in L^2(\Omega; \Rnnsd).
\label{var210}
\end{align}
By recalling the expression for the Lam\'e coefficients
\eqref{eq:lame} the latter can be equivalently restated in the
classical form 
\begin{align}
  &\into \boldsymbol \Sigma(\vec x,t):\nabla^s \vec v  (\vec x)\, \dd \vec x
    =  \into\vec b(\vec x,t)\cdot \vec v (\vec x)  \quad \forall \vec
    v \in V, \ \ \text{for a.e.}  \ t \in
  (0,T), \label{eq:equilibrium}\\
& \vec u(t)=\vec 0 \quad  \text{on $\partial \Omega \setminus
  \overline \o$, \ for a.e.}  \ t \in
  (0,T), \label{eq:boundary_condition}\\[2mm]
&\boldsymbol \Sigma = \lambda \,{\rm tr} \left( \nabla^s \vec u-
\vec P\right) + 2\mu \left( \nabla^s \vec u -
\vec P\right)\ \ \text{a.e. in} \  \Omega \times
  (0,T),  \label{eq:constitutive relation}\\[2mm]
& \sigma_y  \partial |\dot{\vec P}| + 2\gamma \vec P \ni \boldsymbol \Sigma\ \
  \text{a.e. in} \  \Omega \times
  (0,T),  \label{eq:flow_rule}\\
&\vec P(0)=\vec P_0\ \ \text{a.e. in} \  \Omega.  \label{eq:initial_condition}
\end{align}
Relations \eqref{var10} or \eqref{var110} correspond to  
the quasistatic equilibrium system  \eqref{eq:equilibrium} and the corresponding boundary
condition \eqref{eq:boundary_condition}. Note that, since $\O\setminus
\overline{\omega}$ is Lipschitz, condition \eqref{eq:boundary_condition}
can be also read as $\vec u(t)|_{\omega} \in H^1_0(\omega,\Rn)$. The isotropic material response is encoded by the
constitutive relation \eqref{eq:constitutive relation} for the
stress $\boldsymbol \Sigma$ (note however
that isotropy is here assumed for the sake of definiteness only, for
the analysis covers anisotropic cases with no change). The plastic
flow rule \eqref{var20} or \eqref{var210} corresponds to
\eqref{eq:flow_rule}, to be considered together with the
initial condition \eqref{eq:initial_condition}. Recall that problem
\eqref{eq:equilibrium}--\eqref{eq:initial_condition} (equivalently
systems \eqref{var10}--\eqref{var20} or \eqref{var110}--\eqref{var210}
along with initial conditions) admits a unique
strong solution in time \cite{Han-Reddy}, which is indeed a
quasistatic evolution in the sense of
\eqref{eq:stability}--\eqref{eq:energy_balance} \cite[Sec.\ 4.3.1]{MielkeRoubicek15}.

\begin{theorem}[Convergence of quasistatic evolutions] \label{thm:conv_quasi}
Let $\vec b \in W^{1,1}(0,T;L^2(\O;\Rn))$ and $(\ove{\vec u}_\d, \ove{\vec P}_\d) \in \mf{S}_\d(0)$ be such that $(\ove{\vec u}_\d, \ove{\vec P}_\d)\to (\ove{\vec u}_0, \ove{\vec
  P}_0)$ with respect to the strong $\times$ weak topology of $Q$ and $F_\d (\ove{\vec u}_\d, \ove{\vec
  P}_\d,0) \to  F_0 (\ove{\vec u}_0, \ove{\vec
  P}_0,0)$. Then, the unique quasistatic evolution of the
nonlocal problem $(\vec u_\d, \vec P_\d)$ converges to $ (\vec u_0,
\vec P_0)$ with respect to the strong $\times$  weak topology of $Q$,
for all times, where $ (\vec u_0, \vec P_0)$ is the unique quasistatic
evolution of local elastoplasticity.
\end{theorem}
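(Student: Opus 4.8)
The plan is to invoke the abstract theory of evolutionary $\Gamma$-convergence for rate-independent systems from \cite{MRS} (see also \cite[Sec.\ 3.4]{MielkeRoubicek15}), which guarantees convergence of energetic solutions provided four ingredients are in place: (i) $\Gamma$-convergence (in the appropriate topology) of the energies $F_\d(\cdot,\cdot,t)$ to $F_0(\cdot,\cdot,t)$, uniformly in $t$ in a suitable sense; (ii) uniform coercivity/compactness of sublevel sets; (iii) continuity of the dissipation $H$, which here is immediate since $H$ does not depend on $\d$ and is strongly lower semicontinuous and weakly continuous on $L^1$-bounded, $L^2$-weakly convergent sequences by the uniform bound \eqref{eq:bound}; and (iv) the existence of a \emph{mutual recovery sequence}: given stable states $(\vec u_\d,\vec P_\d)\in\mf S_\d(t)$ converging to a limit $(\vec u_0,\vec P_0)$, and any competitor $(\haz{\vec u},\haz{\vec P})$ for the limit problem, one must construct $(\haz{\vec u}_\d,\haz{\vec P}_\d)$ with $\limsup_\d\big(F_\d(\haz{\vec u}_\d,\haz{\vec P}_\d,t)+H(\haz{\vec P}_\d-\vec P_\d)\big)\le F_0(\haz{\vec u},\haz{\vec P},t)+H(\haz{\vec P}-\vec P_0)$.

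First I would handle the data-convergence and compactness points. The time-dependence enters $F_\d$ only through the linear term $-\int_\O \vec b(\cdot,t)\cdot\vec u$, so the $\Gamma$-convergence $F_\d(\cdot,\cdot,t)\to F_0(\cdot,\cdot,t)$ in the strong-$\times$-weak topology of $Q$ (restricted to $\mc V_\d$ in the domain) is exactly Theorem \ref{thm:gamma} plus the (strongly/weakly) continuous perturbation $-\int_\O\vec b(\cdot,t)\cdot\vec u$; moreover this continuity is uniform in $t\in[0,T]$ because $\vec b\in W^{1,1}(0,T;L^2)\hookrightarrow C([0,T];L^2)$. Uniform coercivity is Lemma \ref{le:Fdcoercivity}, and the compactness needed to extract limits of bounded-energy sequences (and of the time-discrete interpolants along all times, via Helly) is Proposition \ref{prop:compactness}: the key point, as in the proof of Theorem \ref{thm1}, is that any $L^2$-limit of a family with $\sup_\d|\vec u_\d|_{\mc S_\d}<\infty$ automatically lies in $H^1(\O,\Rn)$, so the limit objects are admissible for $F_0$. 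The uniform bound \eqref{eq:bound}, applied to the discrete solutions $(\vec u_{\d,N},\vec P_{\d,N})$ with constant $C$ independent of both $N$ and (for $\d<\d_0$) of $\d$, together with the assumed convergence $F_\d(\ove{\vec u}_\d,\ove{\vec P}_\d,0)\to F_0(\ove{\vec u}_0,\ove{\vec P}_0,0)$, gives the $\d$-uniform a priori estimates.

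The core of the argument is the construction of the mutual recovery sequence, and this is where the quadratic structure is essential. Given a stable $(\vec u_\d,\vec P_\d)$ with $(\vec u_\d,\vec P_\d)\to(\vec u_0,\vec P_0)$ strongly-$\times$-weakly and a target $(\haz{\vec u},\haz{\vec P})\in Q$, I would first reduce to the case $(\haz{\vec u},\haz{\vec P})\in(V\cap H^1(\O,\Rn))\times L^2(\O,\Rnnsd)$ (otherwise the right-hand side is $+\infty$ and there is nothing to prove), and then mimic the additive "shift" trick already used in the proof of Theorem \ref{thm1}: set $\haz{\vec P}_\d:=\vec P_\d+(\haz{\vec P}-\vec P_0)$ and $\haz{\vec u}_\d:=\vec u_\d+\vec w$ where $\vec w\in V\cap H^1(\O,\Rn)$ is a fixed recovery-type correction for $\haz{\vec u}-\vec u_0$; since $F_0$ and $F_\d$ are quadratic with associated bilinear forms $B_0$, $B_\d$, the difference $F_\d(\haz{\vec u}_\d,\haz{\vec P}_\d,t)-F_\d(\vec u_\d,\vec P_\d,t)$ expands as $B_\d(\vec w,\haz{\vec P}-\vec P_0)+2B_\d\big((\vec u_\d,\vec P_\d),(\vec w,\haz{\vec P}-\vec P_0)\big)-\int_\O\vec b(\cdot,t)\cdot\vec w$. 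The "diagonal" term $B_\d(\vec w,\haz{\vec P}-\vec P_0)$ is a pointwise-in-$\d$ energy on a fixed smooth argument, so it converges to $B_0(\haz{\vec u}-\vec u_0,\haz{\vec P}-\vec P_0)$ by Proposition \ref{prop:limitFdup} (or Lemma \ref{le:limitFdupNEW} after mollification); the "cross" term requires passing to the limit in $B_\d\big((\vec u_\d,\vec P_\d),(\vec w,\haz{\vec P}-\vec P_0)\big)$ where the first slot converges only weakly — here I would use that $\mf D_\d(\vec u_\d)\weakc\div\vec u_0$ and $\mf E_\d(\vec u_\d,\vec P_\d)\weakc\div\vec u_0$ in $L^2(\O)$ (Lemma \ref{le:Eddiv}\ref{item:Eddivb}), that $T_\d(\vec u_\d,\vec P_\d)$ is bounded in $L^2(\O\times\O)$ (Lemma \ref{le:linearbounded}) and weakly converges to the corresponding local projected-strain object, and that $T_\d(\vec w,\haz{\vec P}-\vec P_0)$ converges \emph{strongly} in $L^2(\O\times\O)$ to its local limit because $(\vec w,\haz{\vec P}-\vec P_0)$ is (or can be taken) smooth — a weak-times-strong pairing then passes to the limit. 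Finally $H(\haz{\vec P}_\d-\vec P_\d)=H(\haz{\vec P}-\vec P_0)$ exactly, so the $H$-term is handled trivially. Combining, $\limsup_\d\big(F_\d(\haz{\vec u}_\d,\haz{\vec P}_\d,t)-F_\d(\vec u_\d,\vec P_\d,t)+H(\haz{\vec P}_\d-\vec P_\d)\big)\le F_0(\haz{\vec u},\haz{\vec P},t)-F_0(\vec u_0,\vec P_0,t)+H(\haz{\vec P}-\vec P_0)$, which after rearranging (and using liminf of $F_\d(\vec u_\d,\vec P_\d,t)\ge F_0(\vec u_0,\vec P_0,t)$ from the $\Gamma$-liminf) gives stability of the limit; passing to the limit in the discrete energy balance \eqref{eq:uppertrue} as in Theorem \ref{thm1} gives the "$\le$" energy inequality, and the reverse follows from stability. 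Uniqueness of $(\vec u_0,\vec P_0)$, hence convergence of the whole family rather than a subsequence, follows from strict convexity of $F_0$.

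The main obstacle is the cross term $B_\d\big((\vec u_\d,\vec P_\d),(\vec w,\haz{\vec P}-\vec P_0)\big)$ in the mutual recovery estimate: one factor converges only weakly while the bilinear forms $B_\d$ themselves vary with $\d$, so ordinary weak-strong pairing does not apply directly. The resolution is to exploit that the test slot $(\vec w,\haz{\vec P}-\vec P_0)$ can be chosen smooth, so that $T_\d$ and $\mf E_\d$ applied to it converge \emph{strongly} (Lemma \ref{le:limitFdupNEW} / the uniform operator bounds of Lemma \ref{le:linearbounded}), which is enough to pair against the weakly convergent $T_\d(\vec u_\d,\vec P_\d)$ and $\mf D_\d(\vec u_\d)$; a mollification of $\haz{\vec u}$ and $\haz{\vec P}$ with a diagonal argument $r=r(\d)\to0$ removes the remaining smoothness gap. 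Once this limit passage is secured, everything else is a direct application of \cite[Thm.\ 3.4.1]{MielkeRoubicek15} (or the corresponding result in \cite{MRS}) on evolutionary $\Gamma$-convergence for quadratic rate-independent systems.
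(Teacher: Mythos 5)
Your overall strategy coincides with the paper's: uniform a priori bounds from the energy balance and Lemma \ref{le:Fdcoercivity}, extraction of all-time limits via the generalized Helly principle and Proposition \ref{prop:compactness}, the additive (quadratic-trick) mutual recovery sequence $(\vec u_\d(t)+\haz{\vec u}-\vec u_0(t),\,\vec P_\d(t)+\haz{\vec P}-\vec P_0(t))$ with $H$ unchanged, a density step for general competitors, upper energy estimate by $\liminf$, lower estimate from stability, and uniqueness by strict convexity. However, there is a genuine gap precisely at the step you yourself single out as the main obstacle: the limit of the cross term $B_\d\big((\vec u_\d,\vec P_\d),(\vec w,\haz{\vec P}-\vec P_0)\big)$. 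Your weak--strong pairing in $L^2(\O\times\O)$ does not work for the $\a$-part of $B_\d$: both factors carry the weight $\r_\d(\xx)^{1/2}$, whose mass concentrates on the diagonal as $\d\to0$. Consequently $T_\d(\vec w,\haz{\vec P}-\vec P_0)$ does \emph{not} converge strongly in $L^2(\O\times\O)$ to any ``local limit'' (no such limit exists in that space): by \eqref{newR2} it tends to $0$ strongly on every set $\{|\vec x'-\vec x|>r\}$, while its norm converges to the nonzero local energy by Proposition \ref{prop:limitFdup}, so its weak limit is $0$. Likewise, the claimed weak limit of $T_\d(\vec u_\d,\vec P_\d)$, ``the local projected-strain object,'' lives on $\O\times\Sn$, not on $\O\times\O$. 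You are thus pairing two sequences that both converge weakly to zero, and the limit of their inner product is a nontrivial local bilinear form; no abstract weak--strong argument can produce it. (Your reasoning is fine for the $\b$-term, where $\mf{D}_\d(\vec u_\d)\weakc\div\vec u_0$ and $\mf{D}_\d(\vec w)\to\div\vec w$ in the fixed space $L^2(\O)$, and trivially for the $\g$-term.)

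This missing limit is exactly the content of the paper's Lemma \ref{lemmone} (Appendix \ref{sec:appendix}), whose proof is the technical core of the theorem: one first replaces $\mf{E}_\d(\vec u_\d,\vec P_\d)$ and $\mf{E}_\d(\tilde{\vec u},\tilde{\vec P})$ by $\div\vec u_0$ and $\div\tilde{\vec u}$ using Lemma \ref{le:Eddiv}, then decomposes the remaining double integral and treats each piece separately, exploiting the degree-zero homogeneity of the integrands via Lemma \ref{le:formulasB}, a nonlocal integration-by-parts identity with the adjoint operator $\mf{D}^*_\d$ (and the bound of \cite{MeSp15}) to use the \emph{strong} $L^2$ convergence $\vec u_\d\to\vec u_0$, modulus-of-continuity estimates for the smooth test pair, and an $H^1$-approximation based on the estimate of \cite{BoBrMi01} for the term involving $\vec u_0(\vec x')-\vec u_0(\vec x)-\nabla\vec u_0(\vec x)(\vec x'-\vec x)$. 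Without an argument of this type (or an equivalent substitute) the stability of the limit, and hence the theorem, is not established. A minor further point: the ``$\leq$'' energy estimate should be obtained by taking the $\liminf$ in the time-continuous energy balance \eqref{eq:energy_balance} at level $\d$ (using Theorem \ref{thm:gamma} and the assumed convergence of the initial energies), rather than in the discrete inequality \eqref{eq:uppertrue}, which belongs to the $N\to\infty$ passage of Theorem \ref{thm1}.
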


\begin{proof}
  This argument follows along the general lines of
  \cite[Thm.\ 3.8]{MRS} and hinges on identifying a suitable {\it
    mutual recovery sequence} for the functionals
  $F_\r$ and $H$.  

The energy balance \eqref{eq:energy_balance} at level $\r$, the
uniform coercivity of $F_\r$ from Lemma \ref{le:Fdcoercivity}, and the
fact that 
$\dot{\vec b} \in L^1(0,T;L^2(\O,\Rn))$ entail that $ \sup_{t\in [0,T]}\| (\vec u_\d,
\vec P_d)\|_{\mc{T}_{\d}}$ and ${\rm Diss}_{[0,T]}(\vec P_\d)$ are
bounded independently of $\d$. By using the generalized Helly Selection Principle
\cite[Thm.\ A.1]{MRS}, Lemma \ref{le:Fdcoercivity},  and Proposition \ref{prop:compactness} one extracts a (non-relabeled)
subsequence converging to $(\vec u_0,\vec P_0)$ strongly $\times$ weakly in $Q$ for all times. By passing to the $\liminf$ as $\d
\to 0$ in the energy balance
\eqref{eq:energy_balance}, as $F_\d \to F_0$ in the $\Gamma$-convergence
sense (Theorem \ref{thm:gamma}) one finds that 
\begin{equation}
F_{0}(\vec u_{0}(t), \vec P_{0}(t),t) + {\rm Diss}_{[0,t]}(\vec P_0)
\leq F_{0}( \vec u_{0}(0), \vec P_{0}(0),0) -\int_0^t \int_\O \dot{\vec b}(\vec
x,s)\cdot \vec u_0(\vec x,s)\, \dd \vec x\, \dd s , \label{eq:energy_up}
\end{equation}
which is the upper energy estimate. Moreover, the initial values of $(\vec u_0,\vec P_0)$ can be computed as
$$ (\vec u_{0}(0), \vec P_{0}(0)) = \lim_{\d \to 0}  (\vec
u_{\d}(0), \vec P_{\d}(0)) =  \lim_{\d \to 0} (\ove{\vec u}_\d, \ove{\vec P}_\d)= (\ove{\vec u}_0, \ove{\vec
  P}_0) , $$
where the limit is strong $\times$ weak in $Q$.

We now need to check that $(\vec u_0,\vec
P_0)$ is globally stable for all times, namely $(\vec u_0(t),\vec
P_0(t))\in \mf{S}_0(t)$ for all $t \in [0,T]$, where the latter set of
stable states is defined starting from the
energy $F_0$. This is obtained by exploiting once again the quadratic nature of the
energy via the {\it quadratic trick}.
As $(\vec u_\d(t),\vec
P_\d(t))\in \mf{S}_\d(t)$ for all $t \in [0,T]$, for any $(\haz{\vec
  u}_\d,\haz{\vec P}_\d) \in Q $ one has that 
\begin{align}
  & 0 \leq F_\d (\haz{\vec
  u}_\d,\haz{\vec P}_\d,t) - F_\d(\vec u_\d(t),\vec P_\d(t),t) + H(
\haz{\vec P}_\d{-}\vec P_\d(t))\nonumber\\
& = B_\d (\haz{\vec   u}_\d,\haz{\vec P}_\d) - B_\d (\vec u_\d(t),\vec P_\d(t)) -
\int_\Omega \vec b (\vec x,t) \cdot (\haz{\vec
  u}_\d - \vec u_\d(t))\, \dd \vec x + H(
\haz{\vec P}_\d {-}\vec P_\d(t)). \label{eq:insert}
\end{align}
Let the competitors $(\haz{\vec u}_0,\haz{\vec P}_0) \in Q$
be given and assume for the time being that $(\haz{\vec u}_0 -\vec u_0(t) ,
\haz{\vec P}_0 -\vec P_0(t)) \in C^\infty (\bar{\Omega};\Rn \times \Rnnsd)$. Insert the {\it mutual recovery sequence}
 $$(\haz{\vec u}_\d,\haz{\vec P}_\d) = \big( \vec u_\d(t)+\haz{\vec u}_0 -\vec u_0(t) , \vec P_\d(t)+\haz{\vec P}_0 -\vec P_0(t)\big) $$
into \eqref{eq:insert} getting 
\begin{align}
  0&\leq  B_\d\big(\haz{\vec u}_0 -\vec u_0(t) ,  \haz{\vec P}_0 -\vec
  P_0(t)\big) - \int_\Omega \vec b(\vec x,t) \cdot (\haz{\vec
    u}_0(\vec x) -\vec u_0(\vec x,t))\, \dd \vec x\nonumber \\
& + H(\haz{\vec P}_0 {-}\vec
  P_0(t)) + 2B_\d\big((\vec u_\d(t),\vec P_\d(t)) , \big(\haz{\vec u}_0 {-}\vec u_0(t) ,  \haz{\vec P}_0 {-}\vec
  P_0(t)\big)\big).\label{eq:pass}
\end{align}
We aim now at passing to the  limit as $\d\to 0$
in \eqref{eq:pass}. The first two terms in the
right-hand side converge by Proposition
\ref{prop:limitFdup} and the dissipation term is independent of
$\d$. One can hence use Lemma \ref{lemmone} for the last term and conclude that 
\begin{align}
  0&\leq  B_0\big(\haz{\vec u}_0 -\vec u_0(t) ,  \haz{\vec P}_0 -\vec
  P_0(t)\big) - \int_\Omega \vec b(\vec x,t) \cdot (\haz{\vec
    u}_0(\vec x) -\vec u_0(\vec x,t))\, \dd \vec x\nonumber \\
& + H(\haz{\vec P}_0 {-}\vec
  P_0(t)) + 2B_0\big((\vec u_0(t),\vec P_0(t)) , \big(\haz{\vec u}_0 -\vec u_0(t) ,  \haz{\vec P}_0 -\vec
  P_0(t)\big)\big)\nonumber\\
&=  F_0 (\haz{\vec
  u}_0,\haz{\vec P}_0,t) - F_0 (\vec u_0(t),\vec P_0(t),t) + H(
\haz{\vec P}_0 {-} \vec P_0(t)).\nonumber
\end{align}
The stability of $(\vec u_0(t),\vec P_0(t))$ is hence checked against
all competitors with $(\haz{\vec u}_0 -\vec u_0(t) ,  \haz{\vec P}_0 -\vec
  P_0(t)) $ in $ C^\infty (\bar{\Omega};\Rn \times \Rnnsd)$. In
  order to conclude for the global stability of  $(\vec u_0(t),\vec
Q_0(t))$ at time $t$ one has now to argue by approximation. Let a
general competitor $(\haz{\vec u}_0 , \haz{\vec P}_0)\in Q$ with $\haz{\vec u}_0 \in H^1 (\O, \Rn)$ be given
and choose a sequence $(\haz{\vec u}_{0j} , \haz{\vec P}_{0j})\in Q$ such that $(\haz{\vec u}_{0j} ,
\haz{\vec P}_{0j}) \to (\haz{\vec u}_0 , \haz{\vec P}_0)$ strongly in
$H^1 (\O, \Rn) \times L^2 (\O, \Rnnsd)$ and $(\haz{\vec u}_{0j} -\vec u_0(t) ,  \haz{\vec P}_{0j} -\vec
  P_0(t)) \in C^\infty (\bar{\Omega};\Rn \times \Rnnsd)$. As $F_0$ and $H$ are continuous with respect to the strong  
topology in $H^1 (\O, \Rn) \times L^2 (\O, \Rnnsd)$ and $L^2(\O,\Rnnsd)^2$, respectively,  one gets
\begin{align*}
  0&\leq \lim_{j \to \infty}\Big( F_0 (\haz{\vec
  u}_{0j},\haz{\vec P}_{0j},t) - F_0(\vec u_0(t),\vec P_0(t),t) + H(
\haz{\vec P}_{0j} {-} \vec P_0(t))\Big)\\
&=  F_0 (\haz{\vec
  u}_0,\haz{\vec P}_0,t) - F_0(\vec u_0(t),\vec P_0(t),t) + H(
\haz{\vec P}_0 {-} \vec P_0(t))
\end{align*}
which proves $(\vec u_0(t),\vec
Q_0(t))\in \mf{S}_0(t)$. Eventually,  global stability allows to
recover the
opposite estimate to \eqref{eq:energy_up} as in
\cite[Prop.\ 2.1.23]{MielkeRoubicek15}.

We have hence proved that $(\vec u_0,\vec P_0)$ is a quasistatic evolution of the local elastoplastic problem. As $F_0$ is strictly convex, such solution is unique and convergence holds for the whole sequence.
\end{proof}

\section*{Acknowledgements}

C.M.-C.\ has been supported by the Spanish Ministry of Economy and Competitivity (Project MTM2014-57769-C3-1-P and the ``Ram\'on y Cajal'' programme RYC-2010-06125) and the ERC Starting grant no.\ 307179.
U.S.\ acknowledges the support by the Vienna Science and Technology Fund (WWTF)
through Project MA14-009 and by the Austrian Science Fund (FWF)
projects F\,65  and  P\,27052. M.K. and U.S.\ acknowledge the support by the
FWF-GA\v{C}R project    I\,2375-16{-}34894L  and by the 
OeAD-M\v{S}MT  project   CZ~17/2016-7AMB16AT015.

\appendix

\section{Auxiliary results}\label{se:auxiliary}

 We collect here some auxiliary results that have been used in
the paper. 
Let $\f \in C^{\infty}_c (\Rn)$ satisfy $\supp \f \subset B (\vec 0, 1)$, $\f \geq 0$, and $\int_{\Rn} \f \, \dd \vec x = 1$.
For each $r>0$, define the function $\f_r \in C^{\infty}_c (\Rn)$ as $\f_r (\vec x) = r^{-n}  \f (\vec x / r)$.
Define $\O_r = \{ \vec x \in \O : \dist (\vec x, \p \O) > r \}$.
As usual, given a function $u : \O \to \R$ its mollification $\f_r \star u : \O_r \to \R$ is defined as
\[
 (\f_r \star u) (\vec x) = \int_{B (\vec 0, r)} \f_r (\vec z) \, u (\vec x - \vec z) \, \dd \vec z .
\]
For vector-valued functions, the mollification is defined componentwise.

The following result was used in Section \ref{se:Glimit}.

\begin{lemma}[Energy decreases by mollification]\label{le:Jensen}
Let $(\vec u, \vec P) \in \mc{T}_{\r} (\O)$.
Let $A \ssubset \O$ be measurable and let $0 < r < \dist (A, \p \O)$.
Then
\begin{align*}
 & \int_A \int_A \r (\vec x {-} \vec x') \left( \mc{E} (\f_r \star \vec u, \f_r \star \vec P) (\vec x, \vec x') - \frac{1}{n} \f_r \star \mf{E}_{\r} (\vec u, \vec P) (\vec x) \right)^2 \dd \vec x' \, \dd \vec x \\
 \leq & \int_{\O} \int_{\O} \r (\vec x {-} \vec x') \left( \mc{E} (\vec u, \vec P) (\vec x, \vec x') - \frac{1}{n} \mf{E}_{\r} (\vec u, \vec P) (\vec x) \right)^2 \dd \vec x' \, \dd \vec x .
\end{align*}
\end{lemma}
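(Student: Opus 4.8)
The plan is to exploit the elementary fact that mollification is an averaging over translations, so that the mollified projected deviatoric strain $\mc{E}(\f_r\star\vec u,\f_r\star\vec P)(\vec x,\vec x') - \frac{1}{n}\big(\f_r\star\mf{E}_\r(\vec u,\vec P)\big)(\vec x)$ can be rewritten as a $\f_r$-average of \emph{diagonal translates} of the un-mollified quantity $G(\vec y,\vec y') := \mc{E}(\vec u,\vec P)(\vec y,\vec y') - \frac{1}{n}\mf{E}_\r(\vec u,\vec P)(\vec y)$. Once this is established, the claim follows immediately from Jensen's inequality for the convex function $t\mapsto t^2$ together with a translation change of variables. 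Note that $G$ is a well-defined measurable function on $\O\times\O$, since $(\vec u,\vec P)\in\mc{T}_\r(\O)$ forces $\mf{E}_\r(\vec u,\vec P)\in L^2(\O)$ by \eqref{eq:ErleqE}; moreover, as $\r$ is integrable, the right-hand side of the asserted inequality equals $\int_\O\int_\O\r(\vec x{-}\vec x')\,G(\vec x,\vec x')^2\,\dd\vec x'\,\dd\vec x<\infty$.

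The first step is to prove the identity
\[
 \mc{E}(\f_r\star\vec u,\f_r\star\vec P)(\vec x,\vec x') - \frac{1}{n}\big(\f_r\star\mf{E}_\r(\vec u,\vec P)\big)(\vec x) = \int_{B(\vec 0,r)}\f_r(\vec z)\,G(\vec x-\vec z,\vec x'-\vec z)\,\dd\vec z \qquad \text{for a.e. } \vec x,\vec x'\in A .
\]
For $\vec x,\vec x'\in A$ and $|\vec z|<r<\dist(A,\p\O)$ the points $\vec x-\vec z$ and $\vec x'-\vec z$ remain in $\O$, so every term above is meaningful. For the $\mc{E}$-part one expands $(\f_r\star\vec u)(\vec x') - (\f_r\star\vec u)(\vec x)$ and $(\f_r\star\vec P)(\vec x)(\vec x'{-}\vec x)$ as $\vec z$-averages and uses $(\vec x'{-}\vec z) - (\vec x{-}\vec z) = \vec x'{-}\vec x$; since $\mc{E}$ only sees the increment of $\vec u$ and the value of $\vec P$ contracted with the separation vector, this produces precisely $\int_{B(\vec 0,r)}\f_r(\vec z)\,\mc{E}(\vec u,\vec P)(\vec x-\vec z,\vec x'-\vec z)\,\dd\vec z$. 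The $\mf{E}_\r$-term is an average of translates directly from the definition of mollification. It is essential here that the statement features $\f_r\star\mf{E}_\r(\vec u,\vec P)$ rather than $\mf{E}_\r(\f_r\star\vec u,\f_r\star\vec P)$, the latter not even being defined on all of $\O$.

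Then I would apply Jensen's inequality in the identity above (noting that $\f_r(\vec z)\,\dd\vec z$ is a probability measure on $B(\vec 0,r)$), square, multiply by $\r(\vec x{-}\vec x')\geq0$, integrate over $A\times A$, and interchange integrations by Tonelli's theorem, so that the left-hand side of the lemma is at most $\int_{B(\vec 0,r)}\f_r(\vec z)\big(\int_A\int_A\r(\vec x{-}\vec x')\,G(\vec x-\vec z,\vec x'-\vec z)^2\,\dd\vec x'\,\dd\vec x\big)\,\dd\vec z$. For each fixed $\vec z$, the substitution $\vec y=\vec x-\vec z$, $\vec y'=\vec x'-\vec z$ has unit Jacobian and leaves $\r(\vec x{-}\vec x')=\r(\vec y{-}\vec y')$ unchanged, turning the inner integral into $\int_{A-\vec z}\int_{A-\vec z}\r(\vec y{-}\vec y')\,G(\vec y,\vec y')^2\,\dd\vec y'\,\dd\vec y$; since $A-\vec z\subseteq\O$ (again because $|\vec z|<r<\dist(A,\p\O)$) and the integrand is nonnegative, this is bounded by $\int_\O\int_\O\r(\vec y{-}\vec y')\,G(\vec y,\vec y')^2\,\dd\vec y'\,\dd\vec y$, which is independent of $\vec z$. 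Pulling this constant out of the $\vec z$-integral and using $\int_{B(\vec 0,r)}\f_r=1$ gives exactly the right-hand side of the lemma. I do not anticipate a real obstacle; the only points requiring care are the well-definedness of $\mf{E}_\r(\vec u,\vec P)$ as an $L^2$ function despite the principal value in its definition, and the elementary domain bookkeeping ensuring that both the mollification and the translation change of variables keep every argument inside $\O$.
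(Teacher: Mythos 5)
Your proposal is correct and follows essentially the same route as the paper's proof: rewrite the mollified projected strain as a $\f_r$-average of diagonal translates of the un-mollified quantity, apply Jensen's inequality, exchange the order of integration, and use the translation invariance of $\r(\vec x{-}\vec x')$ together with $A-\vec z\subset\O$ to bound by the integral over $\O\times\O$. Your additional remarks on the well-definedness of $\mf{E}_\r(\vec u,\vec P)$ and on why the statement features $\f_r\star\mf{E}_\r(\vec u,\vec P)$ are accurate but not a departure from the paper's argument.
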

\begin{proof}
For each $\vec x, \vec x' \in A$,
\[
 \mc{E} (\f_r \star \vec u, \f_r \star \vec P) (\vec x, \vec x') - \frac{1}{n} \f_r \star \mf{E}_{\r} (\vec u, \vec P) (\vec x) = \int_{B (\vec 0, r)} \f_r (\vec z) \left( \mc{E} (\vec u, \vec P) (\vec x - \vec z, \vec x' {-} \vec z) - \frac{1}{n} \mf{E}_{\r} (\vec u, \vec P) (\vec x - \vec z) \right) \dd \vec z ,
\]
so, by Jensen's inequality,
\begin{align*}
 & \left( \mc{E} (\f_r \star \vec u, \f_r \star \vec P) (\vec x, \vec x') - \frac{1}{n} \f_r \star \mf{E}_{\r} (\vec u, \vec P) (\vec x) \right)^2 \\
 \leq & \int_{B (\vec 0, r)} \f_r (\vec z) \left( \mc{E} (\vec u, \vec P) (\vec x - \vec z, \vec x' {-} \vec z) - \frac{1}{n} \mf{E}_{\r} (\vec u, \vec P) (\vec x - \vec z) \right)^2 \dd \vec z .
\end{align*}
Therefore,
\begin{align*}
 & \int_A \int_A \r (\vec x {-} \vec x') \left( \mc{E} (\f_r \star \vec u, \f_r \star \vec P) (\vec x, \vec x') - \frac{1}{n} \f_r \star \mf{E}_{\r} (\vec u, \vec P) (\vec x) \right)^2 \dd \vec x' \, \dd \vec x \\
 \leq & \int_{B (\vec 0, r)} \f_r (\vec z) \int_{\O_r} \int_{\O_r} \r (\vec x {-} \vec x') \left( \mc{E} (\vec u, \vec P) (\vec x - \vec z, \vec x' {-} \vec z) - \frac{1}{n} \mf{E}_{\r} (\vec u, \vec P) (\vec x - \vec z) \right)^2  \dd \vec x' \, \dd \vec x \, \dd \vec z .
\end{align*}
But, for each $\vec z \in B (\vec 0, r)$,
\begin{align*}
 & \int_A \int_A \r (\vec x {-} \vec x') \left( \mc{E} (\vec u, \vec P) (\vec x - \vec z, \vec x' {-} \vec z) - \frac{1}{n} \mf{E}_{\r} (\vec u, \vec P) (\vec x - \vec z) \right)^2  \dd \vec x' \, \dd \vec x \\
 = & \int_{A - \vec z} \int_{A - \vec z} \r (\vec x {-} \vec x') \left( \mc{E} (\vec u, \vec P) (\vec x, \vec x') - \frac{1}{n} \mf{E}_{\r} (\vec u, \vec P) (\vec x) \right)^2  \dd \vec x' \, \dd \vec x \\
 \leq & \int_{\O} \int_{\O} \r (\vec x {-} \vec x') \left( \mc{E} (\vec u, \vec P) (\vec x, \vec x') - \frac{1}{n} \mf{E}_{\r} (\vec u, \vec P) (\vec x) \right)^2  \dd \vec x' \, \dd \vec x ,
\end{align*}
so
\begin{align*}
 & \int_{B (\vec 0, r)} \f_r (\vec z) \int_A \int_A \r (\vec x {-} \vec x') \left( \mc{E} (\vec u, \vec P) (\vec x - \vec z, \vec x' {-} \vec z) - \frac{1}{n} \mf{E}_{\r} (\vec u, \vec P) (\vec x - \vec z) \right)^2  \dd \vec x' \, \dd \vec x \, \dd \vec z \\
 \leq & \int_{\O} \int_{\O} \r (\vec x {-} \vec x') \left( \mc{E} (\vec u, \vec P) (\vec x, \vec x') - \frac{1}{n} \mf{E}_{\r} (\vec u, \vec P) (\vec x) \right)^2  \dd \vec x' \, \dd \vec x
\end{align*}
and the proof is concluded.
\end{proof}

We now show an elementary calculation of some integrals in a ball, where we exploit that the kernel is radial.

\begin{lemma}[Radially symmetric kernels]\label{le:formulasB}
Let $\r \in L^1_{\loc} (\Rn)$ and let $\bar{\r} : [0, \infty) \to [0, \infty)$ be such that $\r (\vec x) = \bar{\r} (|\vec x|)$ for a.e.\ $\vec x \in \Rn$.
Let $r>0$.
The following holds:
\begin{enumerate}[a)]
\item\label{item:homa} Let $f \in L^{\infty}_{\loc} (\Rn)$ be positively homogeneous of degree $0$.
Then
\[
 \int_{B(\vec 0, r)} \r (\vec x) \, f (\vec x) \, \dd \vec x = \int_{B(\vec 0, r)} \r (\vec x) \, \dd \vec x \ \dashint_{\Sn} f (\vec z) \, \dd \Hn (\vec z) .
\]
\item\label{item:homb} Let $\vec A \in \Rnn$.
Then
\[
 \int_{B(\vec 0, r)} \r (\vec x) \frac{\vec A \vec x \cdot \vec x}{|\vec x|^2} \, \dd \vec x = \frac{1}{n} \int_{B(\vec 0, r)} \r (\vec x) \, \dd \vec x \, \tr \vec A .
\]
\end{enumerate}
\end{lemma}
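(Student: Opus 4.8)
The plan is to reduce everything to polar coordinates, where the radial symmetry of $\r$ makes the integrals factorize.

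For part \emph{\ref{item:homa})}, I would write $\vec x = s \vec z$ with $s = |\vec x| \in (0,r)$ and $\vec z \in \Sn$, so that $\dd \vec x = s^{n-1}\,\dd s\,\dd\Hn(\vec z)$. Since $\r(\vec x) = \bar{\r}(s)$ depends only on $s$ and $f(s\vec z) = f(\vec z)$ by positive $0$-homogeneity, the double integral splits:
\[
 \int_{B(\vec 0,r)} \r(\vec x)\, f(\vec x) \, \dd \vec x = \left( \int_0^r \bar{\r}(s)\, s^{n-1}\,\dd s \right) \int_{\Sn} f(\vec z)\,\dd\Hn(\vec z) .
\]
The radial factor $I_r := \int_0^r \bar{\r}(s)\,s^{n-1}\,\dd s$ is finite because $\r\in L^1_{\loc}(\Rn)$; taking $f\equiv 1$ gives $\int_{B(\vec 0,r)}\r\,\dd\vec x = I_r\,\Hn(\Sn)$. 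If $I_r = 0$ both sides of the claimed identity vanish; otherwise I divide by $I_r\,\Hn(\Sn)$ and recall that $\dashint_{\Sn} = \Hn(\Sn)^{-1}\int_{\Sn}$.

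For part \emph{\ref{item:homb})}, I observe that $\vec x\mapsto(\vec A\vec x\cdot\vec x)/|\vec x|^2$ is positively homogeneous of degree $0$ and bounded by $|\vec A|$, so part \emph{\ref{item:homa})} applies and reduces the claim to evaluating the spherical average $\dashint_{\Sn}\vec A\vec z\cdot\vec z\,\dd\Hn(\vec z)$. Expanding $\vec A\vec z\cdot\vec z = \sum_{i,j}A_{ij}z_iz_j$, I would use the moment identity $\dashint_{\Sn}z_iz_j\,\dd\Hn(\vec z) = \frac1n\delta_{ij}$: the off-diagonal moments vanish by the reflection symmetry $z_k\mapsto -z_k$ of the sphere, the diagonal moments are mutually equal by rotational invariance, and their common value $c$ is pinned down by $nc = \dashint_{\Sn}|\vec z|^2\,\dd\Hn(\vec z) = 1$. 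Hence $\dashint_{\Sn}\vec A\vec z\cdot\vec z\,\dd\Hn(\vec z) = \frac1n\sum_i A_{ii} = \frac1n\tr\vec A$, which is the asserted formula.

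There is no genuine obstacle here; the computation is elementary. The only points to keep track of are the possibly degenerate case $I_r = 0$, the fact that $f$ (and the quotient in b)) need not be defined at the origin---irrelevant, since $\{\vec 0\}$ is Lebesgue-null---and a clean reference for the polar-coordinates change of variables underlying the factorization.
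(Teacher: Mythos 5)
Your proposal is correct and follows essentially the same route as the paper: part \emph{a)} via the polar/coarea factorization exploiting radial symmetry and $0$-homogeneity, and part \emph{b)} by applying \emph{a)} to $f(\vec x)=\vec A\vec x\cdot\vec x/|\vec x|^2$ and then evaluating the spherical average. The only cosmetic difference is in that last evaluation: the paper diagonalizes the symmetric part $\vec A_s$ and uses the equality of the moments $\int_{\Sn} z_i^2\,\dd\Hn$, whereas you invoke the full moment identity $\dashint_{\Sn} z_i z_j\,\dd\Hn(\vec z)=\tfrac1n\delta_{ij}$ (off-diagonal terms vanishing by reflection symmetry) --- both arguments are equally elementary and yield $\tfrac1n\tr\vec A$.
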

\begin{proof}
We start with \emph{\ref{item:homa})}.
We use the coarea formula and the homogeneity of $f$ to find that
\[
 \int_{B(\vec 0, r)} \r (\vec x) \, f (\vec x) \, \dd \vec x = \int_0^r \bar{\r}(s) \int_{\p B (\vec 0, s)} f (\vec x) \, \dd \Hn (\vec x) \, \dd s = \int_0^r s^{n-1} \, \bar{\r}(s)  \, \dd s \int_{\Sn} f (\vec z) \, \dd \Hn (\vec z) .
\]
The above formula applied to the constant function $f=1$ shows that
\[
  \int_{B(\vec 0, r)} \r (\vec x) \, \dd \vec x = \Hn (\Sn) \int_0^r s^{n-1} \, \bar{\r}(s)  \, \dd s .
\]
Putting the two formulas together concludes the proof of \emph{\ref{item:homa})}.

For part \emph{\ref{item:homb})}, we apply \emph{\ref{item:homa})} to the function $f(\vec x) = \frac{1}{|\vec x|^2} \vec A \vec x \cdot \vec x$ and obtain that
\[
 \int_{B(\vec 0, r)} \r (\vec x) \frac{\vec A \vec x \cdot \vec x}{|\vec x|^2} \, \dd \vec x = \int_{B(\vec 0, r)} \r (\vec x) \, \dd \vec x \ \dashint_{\Sn} \vec A \vec z \cdot \vec z \, \dd \Hn (\vec z) .
\]
Now let $\vec A_s = \frac{1}{2}(\vec A + \vec A^\top)$.
Then $\vec A \vec z \cdot \vec z = \vec A_s \vec z \cdot \vec z$ for all $\vec z \in \Rn$ and $\tr \vec A = \tr \vec A_s$.
Let $\l_1, \ldots, \l_n$ be the eigenvalues of $\vec A_s$, let $\vec R \in O (n)$ and $\vec D \in \Rnn$ be such that $\vec A_s = \vec R \vec D \vec R^\top$ and $\vec D$ is diagonal with entries $\l_1, \ldots, \l_n$.
A change of variables shows that
\[
 \int_{\Sn} \vec A_s \vec z \cdot \vec z \, \dd \Hn (\vec z) = \int_{\Sn} \vec D \vec z \cdot \vec z \, \dd \Hn (\vec z) = \int_{\Sn} \sum_{i=1}^n \l_i z_i^2 \, \dd \Hn (\vec z) .
\]
Another change of variables shows that for all $i \in \{1, \ldots, n\}$,
\[
 \int_{\Sn} z_i^2 \, \dd \Hn (\vec z) = \int_{\Sn} z_1^2 \, \dd \Hn (\vec z) ,
\]
so
\[
 \Hn (\Sn) = \int_{\Sn} |\vec z|^2 \, \dd \Hn (\vec z) = n \int_{\Sn} z_1^2 \, \dd \Hn (\vec z) .
\]
Thus,
\[
 \dashint_{\Sn} \sum_{i=1}^n \l_i z_i^2 \, \dd \Hn (\vec z) = \sum_{i=1}^n \l_i \dashint_{\Sn} z_1^2 \, \dd \Hn (\vec z) = \frac{1}{n}  \sum_{i=1}^n \l_i = \frac{1}{n} \tr \vec A ,
\]
which concludes the proof.
\end{proof}

\section{Convergence lemma}\label{sec:appendix}

We present here the proof of the key convergence lemma used for
passing to the limit in \eqref{eq:pass} in the proof of Theorem \ref{thm:conv_quasi}.

\begin{lemma}[Convergence of the bilinear term]\label{lemmone}
  Let $(\vec u_\d,\vec P_\d)\to (\vec u_0, \vec P_0)$
  strongly $\times$ weakly in $Q$, $(\tilde{\vec u},\tilde{\vec P}) \in C^\infty (\bar{\Omega}; \Rn \times \Rnnsd)$, and $\|(\vec u_\d,\vec
p_\d)\|_{\mc{T}_\d}$ be bounded independently of $\d$. Then 
\begin{equation*}
  %\label{eq:lemmone}
  B_\d\big((\vec u_\d,\vec P_\d),(\tilde{\vec u},\tilde{\vec P})\big)
  \to B_0\big((\vec u_0,\vec P_0),(\tilde{\vec u},\tilde{\vec P})\big).
\end{equation*}
\end{lemma}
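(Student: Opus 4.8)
The plan is to handle the three summands of $B_\d((\vec u_\d,\vec P_\d),(\tilde{\vec u},\tilde{\vec P}))$ one by one. The hardening term $\g\int_\O\vec P_\d:\tilde{\vec P}\,\dd\vec x$ converges to $\g\int_\O\vec P_0:\tilde{\vec P}\,\dd\vec x$ by weak--strong convergence. For the bulk term $\b\int_\O\mf{D}_\d(\vec u_\d)\,\mf{D}_\d(\tilde{\vec u})\,\dd\vec x$, since $\tilde{\vec u}$ is smooth one has $\mf{D}_\d(\tilde{\vec u})\to\div\tilde{\vec u}$ strongly in $L^2(\O)$ (\cite{MeDu15}), while Lemma \ref{le:Eddiv}.b applied with null plastic strain (its hypothesis $\sup_\d|\vec u_\d|_{\mc{S}_\d}<\infty$ following from $\sup_\d\|(\vec u_\d,\vec P_\d)\|_{\mc{T}_\d}<\infty$ and \eqref{eq:uSupT}) gives $\mf{D}_\d(\vec u_\d)\weakc\div\vec u_0$ weakly in $L^2(\O)$; hence $\b\int_\O\mf{D}_\d(\vec u_\d)\mf{D}_\d(\tilde{\vec u})\to\b\int_\O\div\vec u_0\div\tilde{\vec u}$. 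By the definition of $B_0$ it then remains to show that the shear term $\a\int_\O\int_\O\r_\d(\vec x'-\vec x)\Phi_\d\,\tilde\Phi_\d$, where $\Phi_\d:=\mc{E}(\vec u_\d,\vec P_\d)(\cdot,\cdot)-\tfrac1n\mf{E}_\d(\vec u_\d,\vec P_\d)(\cdot)$ and $\tilde\Phi_\d:=\mc{E}(\tilde{\vec u},\tilde{\vec P})(\cdot,\cdot)-\tfrac1n\mf{E}_\d(\tilde{\vec u},\tilde{\vec P})(\cdot)$, converges to $\a n\int_\O\dashint_{\Sn}\big((\nabla\vec u_0-\vec P_0)\vec z\cdot\vec z-\tfrac1n\div\vec u_0\big)\big((\nabla\tilde{\vec u}-\tilde{\vec P})\vec z\cdot\vec z-\tfrac1n\div\tilde{\vec u}\big)\,\dd\Hn(\vec z)\,\dd\vec x$.

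Note first that $\r_\d(\vec x'-\vec x)^{1/2}\Phi_\d=T_\d(\vec u_\d,\vec P_\d)$ is bounded in $L^2(\O\times\O)$ by $C|(\vec u_\d,\vec P_\d)|_{\mc{T}_\d}$ (Lemma \ref{le:linearbounded}), and that, $\tilde{\vec P}$ being trace-free and smooth, $\mf{E}_\d(\tilde{\vec u},\tilde{\vec P})\to\div\tilde{\vec u}$ uniformly on compact subsets of $\O$ (as in Lemma \ref{le:Eddiv}.a and \cite{MeDu15}), so that $\tilde\Phi_\d$ converges, uniformly on compacts and boundedly, to $\tilde\Phi:=\mc{E}(\tilde{\vec u},\tilde{\vec P})(\cdot,\cdot)-\tfrac1n\div\tilde{\vec u}(\cdot)$. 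The factor $\Phi_\d$, however, is \emph{not} asymptotically smooth, as $\vec u_\d$ is only bounded in $L^2$ and in the $\mc{S}_\d$-seminorm. The remedy is mollification transferred by kernel symmetry. Fix a symmetric mollifier $\f$, a Lipschitz domain $A\ssubset\O$ and $0<r<\dist(A,\p\O)$; the contribution of $\O\times\O$ outside $A\times A$ is bounded, uniformly in $\d$ and $r$, by a constant times $o(1)$ as $A\nearrow\O$ (Cauchy--Schwarz together with $\int_\O\r_\d(\vec x'-\vec x)\,\dd\vec x'\le n$ and the uniform bound $\|\tilde\Phi_\d\|_\infty\le C$ on compacts), so one may work on $A\times A$. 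Replacing $\tilde\Phi_\d$ by $\f_r\star\tilde\Phi_\d=\mc{E}(\f_r\star\tilde{\vec u},\f_r\star\tilde{\vec P})(\cdot,\cdot)-\tfrac1n\mf{E}_\d(\f_r\star\tilde{\vec u},\f_r\star\tilde{\vec P})(\cdot)$ costs at most $C|(\vec u_\d,\vec P_\d)|_{\mc{T}_\d}\|T_\d(\tilde{\vec u}-\f_r\star\tilde{\vec u},\tilde{\vec P}-\f_r\star\tilde{\vec P})\|_{L^2}\le C'(\|\tilde{\vec u}-\f_r\star\tilde{\vec u}\|_{H^1}+\|\tilde{\vec P}-\f_r\star\tilde{\vec P}\|_2)$, which tends to $0$ as $r\to0$ uniformly in $\d$ by Lemmas \ref{le:linearbounded}, \ref{le:Tr} and \ref{le:SrW12}. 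Then, since $\r_\d$ depends only on $\vec x'-\vec x$ and $\f$ is symmetric, the change of variables $(\vec x,\vec x')\mapsto(\vec x-\vec z,\vec x'-\vec z)$ under the mollification integral yields $\int\int\r_\d\,\Phi_\d\,(\f_r\star\tilde\Phi_\d)=\int\int\r_\d\,(\f_r\star\Phi_\d)\,\tilde\Phi_\d$, up to boundary-layer terms near $\p A$ that disappear in the limit $A\nearrow\O$. Now $\f_r\star\Phi_\d=\mc{E}(\f_r\star\vec u_\d,\f_r\star\vec P_\d)(\cdot,\cdot)-\tfrac1n\f_r\star\mf{E}_\d(\vec u_\d,\vec P_\d)(\cdot)$ \emph{is} asymptotically smooth on $A$: $\f_r\star\vec u_\d\to\f_r\star\vec u_0$ in $C^1(\bar A)$, $\f_r\star\vec P_\d\to\f_r\star\vec P_0$ in $C(\bar A)$ (weak $L^2$ convergence plus equicontinuity of the mollified family), and $\f_r\star\mf{E}_\d(\vec u_\d,\vec P_\d)\to\div(\f_r\star\vec u_0)$ in $C(\bar A)$ by Lemma \ref{le:Eddiv}.b, all with $\d$-uniform bounds.

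With both factors converging uniformly on $A\times A$ and bounded there, I replace them by their limits $\Phi_{0,r}:=\mc{E}(\f_r\star\vec u_0,\f_r\star\vec P_0)(\cdot,\cdot)-\tfrac1n\div(\f_r\star\vec u_0)(\cdot)$ and $\tilde\Phi$ at a cost $\le\|(\f_r\star\Phi_\d)\tilde\Phi_\d-\Phi_{0,r}\tilde\Phi\|_{C(A\times A)}\int_A\int_A\r_\d(\vec x'-\vec x)\,\dd\vec x'\dd\vec x=o(1)$ as $\d\to0$. For the fixed smooth fields $\Phi_{0,r}$, $\tilde\Phi$ the limit $\lim_{\d\to0}\int_A\int_A\r_\d\,\Phi_{0,r}\tilde\Phi=n\int_A\dashint_{\Sn}\big((\nabla(\f_r\star\vec u_0)-\f_r\star\vec P_0)\vec z\cdot\vec z-\tfrac1n\div(\f_r\star\vec u_0)\big)\big((\nabla\tilde{\vec u}-\tilde{\vec P})\vec z\cdot\vec z-\tfrac1n\div\tilde{\vec u}\big)\,\dd\Hn(\vec z)\,\dd\vec x$ follows from the argument of Proposition \ref{prop:limitFdup}, Step 1 (the finite differences of the smooth functions are replaced by their gradients up to a remainder negligible against $\r_\d$, then Lemma \ref{le:formulasB} and \eqref{newR2} are used), combined with the polarization identity reducing the bilinear integrand to squares as in Lemma \ref{le:limitFdupNEW}. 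Finally one sends $r\to0$, using $\nabla(\f_r\star\vec u_0)\to\nabla\vec u_0$ and $\f_r\star\vec P_0\to\vec P_0$ in $L^2(A)$ and the $L^\infty(A)$-bound on the smooth second factor, and then $A\nearrow\O$ by dominated convergence; chaining the estimates in the order $\d\to0$, then $r\to0$, then $A\nearrow\O$ delivers the result. The one delicate point --- and the main obstacle --- is precisely the replacement of $\Phi_\d$: one \emph{cannot} mollify it directly, because $\|\vec u_\d-\f_r\star\vec u_\d\|_{\mc{S}_\d}$ is only bounded, not small; the argument instead moves the mollification onto the smooth factor $\tilde\Phi_\d$, which is legitimate by the translation invariance of $\r_\d$ and the symmetry of $\f$, and where every mollification error is automatically uniform in $\d$.
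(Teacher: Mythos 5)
Your proof is correct but takes a genuinely different route from the paper's. The paper's Lemma~\ref{lemmone} also dispatches the $\g$-- and $\b$--terms as you do, but for the $\a$--term it first replaces $\mf{E}_\d(\vec u_\d,\vec P_\d)$ and $\mf{E}_\d(\tilde{\vec u},\tilde{\vec P})$ by $\div\vec u_0$ and $\div\tilde{\vec u}$ (two Cauchy--Schwarz error estimates using Lemma~\ref{le:Eddiv}), then decomposes $\mc{E}(\vec u_\d,\vec P_\d) = -\vec P_\d\text{-part} + \mc{D}(\vec u_\d - \vec u_0) + \mc{D}(\vec u_0)$. The plastic piece and the $\mc D(\vec u_0)$ piece are handled by the homogeneity argument of Lemma~\ref{le:formulasB} plus equiintegrability. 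The genuinely singular piece, $\int\int\r_\d\,\mc D(\vec u_\d - \vec u_0)\,\mc E(\tilde{\vec u},\tilde{\vec P})$, where $\vec u_\d-\vec u_0$ is only strongly $L^2$--convergent, is killed by a \emph{nonlocal integration-by-parts} formula $\int_\O\int_\O\r_\d\mc D(\vec u)\varphi = -\int_\O\vec u\cdot\mf D_\d^*(\varphi)$, which shifts the singular kernel onto the smooth pair $(\tilde{\vec u},\tilde{\vec P})$; the needed $\d$--uniform $L^2$ bound for $\mf D_\d^*$ acting on smooth data is cited from \cite[Formula (2.3)]{MeSp15}. Your argument replaces this adjoint device by a \emph{mollification--transfer}: you mollify the smooth factor $\tilde\Phi_\d$ (controlling the cost via Lemma~\ref{le:linearbounded}, \eqref{eq:uSupT} and Lemma~\ref{le:SrW12}), then use the translation invariance of $\r_\d$ and the symmetry of $\f$ to shuttle the convolution onto $\Phi_\d$, so that $\f_r\star\Phi_\d$ becomes uniformly convergent to a smooth limit via Lemma~\ref{le:Eddiv}.b; you then finish with the pointwise limit of Proposition~\ref{prop:limitFdup}/Lemma~\ref{le:limitFdupNEW} and polarization, followed by $r\to 0$ and $A\nearrow\O$. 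Your route avoids introducing the adjoint operator $\mf D_\d^*$ and reuses the mollification machinery (Lemma~\ref{le:Jensen}) already in place for the $\G$--$\liminf$ in Theorem~\ref{thm:gamma}; the price is a triple limit $\d\to 0$, $r\to 0$, $A\nearrow\O$ with more bookkeeping. The one point you should state more carefully: when you ``transfer'' the mollification by change of variables, you need the \emph{diagonal} mollification of $\tilde\Phi_\d$ (with $\f_r\star\mf E_\d(\tilde{\vec u},\tilde{\vec P})$ in the second slot), whereas your $E_2$ estimate uses the field-level one (with $\mf E_\d(\f_r\star\tilde{\vec u},\f_r\star\tilde{\vec P})$). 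The two differ by $\tfrac1n\bigl[\f_r\star\mf E_\d(\tilde{\vec u},\tilde{\vec P}) - \mf E_\d(\f_r\star\tilde{\vec u},\f_r\star\tilde{\vec P})\bigr]$, which on $\bar A$ with $r<\dist(A,\p\O)$ is bounded in sup norm by a constant times $\int_{\Rn\setminus B(\vec 0,\dist(A,\p\O)-r)}\r_\d$ and hence vanishes as $\d\to 0$ for fixed $r,A$ by \eqref{newR2} --- this is a genuine extra error term, harmless but worth isolating. Likewise, the boundary-layer term from replacing $A\pm\vec z$ by $A$ is $O_A(\sqrt r)$ uniformly in $\d$, and so vanishes in the $r\to 0$ step (for fixed $A$), rather than in the $A\nearrow\O$ step as you wrote.
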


\begin{proof} We aim at computing the limit of 
  \begin{align*}
    & B_\d\big((\vec u_\d,\vec P_\d),(\tilde{\vec u},\tilde{\vec
      P})\big) = 
\gamma \int_A \vec P_\d (\vec x) : \tilde{\vec P} (\vec x) \, \dd
\vec x +
\beta \int_\O \mf{D}_\d  (\vec u_\d)(\vec x) \, \mf{D}_\d (\tilde{\vec u})(\vec x)
\, \dd \vec x \nonumber\\
&+\alpha  \int_\O\int_\O \r_\d(\vec x {-} \vec x') \left(\mc{E}(\vec u_\d, \vec P_\d)(\vec
x, \vec x') - \frac{1}{n} \mf{E}_\d(\vec u_\d, \vec P_\d)(\vec x)\right) \left(\mc{E}(\tilde{\vec u}, \tilde{\vec P})(\vec
x, \vec x') - \frac{1}{n} \mf{E}_\d(\tilde{\vec u} ,\tilde{\vec P})(\vec x)\right) \,
\dd \vec x' \, \dd \vec x.
  \end{align*}
Passing to the limit in the $\gamma$ term is straightforward as $\vec
P_\d \weakc \vec P_0$ in $L^2(\Omega;\Rnnsd)$. The
$\beta$ terms goes to the limit as well, for we have that $\mf{D}_\d
(\vec u_\d) \weakc \div \vec u_0$ in $L^2(\Omega)$ \cite[Lemma
3.6]{MeDu15} and  $\mf{D}_\d
(\tilde{\vec u}) \to \div \tilde{\vec u}$ strongly in $L^2(\Omega)$
\cite[Lemma 3.1]{MeDu15} (see also Lemma \ref{le:Eddiv}). We will hence focus on the $\alpha$
term, from which, for simplicity of notation, we omit the parameter $\alpha$:
\begin{align*}
&A_\d\big((\vec u_\d,\vec P_\d),(\tilde{\vec u},\tilde{\vec
      p})\big) \\
&=   \int_\O\int_\O \r_\d(\vec x {-} \vec x') \left(\mc{E}(\vec u_\d, \vec P_\d)(\vec
x, \vec x') - \frac{1}{n} \mf{E}_\d(\vec u_\d, \vec P_\d)(\vec x)\right) \left(\mc{E}(\tilde{\vec u}, \tilde{\vec P})(\vec
x, \vec x') - \frac{1}{n} \mf{E}_\d(\tilde{\vec u} ,\tilde{\vec P})(\vec x)\right) \,
\dd \vec x' \, \dd \vec x.
\end{align*}
The strategy of the proof is that of decomposing $A_\d$ in a sum of
integrals and discuss the corresponding limits separately. We proceed
in subsequent steps. 
\bigskip

\noindent {\bf Step 1.} Let us start by simplifying the problem of
computing the limit of $A_\d$ by
replacing $\mf{E}_\d(\vec u_\d,\vec P_\d)$ and $\mf{E}_\d(\tilde{\vec
  u},\tilde{\vec P})$ by $\div \vec u_0$ and $\div \tilde{\vec
  u}$, respectively. In particular, within this step we aim at proving that 
\begin{align}
 & \lim_{\d \to 0} \left[ A_\d\big((\vec u_\d,\vec P_\d),(\tilde{\vec u},\tilde{\vec
      P})\big) - \tilde A_\d\big((\vec u_\d,\vec P_\d),(\tilde{\vec u},\tilde{\vec
      P});\vec u_0\big) \right] \label{lim1} = 0 ,
\end{align}
where we have set 
\begin{align}
&\tilde A_\d\big((\vec u_\d,\vec P_\d),(\tilde{\vec u},\tilde{\vec
      P}) ;\vec u_0\big) \nonumber\\
&=   \int_\O\int_\O \r_\d(\vec x {-} \vec x') \left(\mc{E}(\vec u_\d, \vec P_\d)(\vec
x, \vec x') - \frac{1}{n} \div \vec u_0 (\vec x)\right) \left(\mc{E}(\tilde{\vec u}, \tilde{\vec P})(\vec
x, \vec x') - \frac{1}{n} \div \tilde{\vec u} (\vec x)\right) \,
\dd \vec x' \, \dd \vec x.\nonumber
\end{align}
In order to do so, let us write 
$$ A_\d\big((\vec u_\d,\vec P_\d),(\tilde{\vec u},\tilde{\vec
      P})\big) -\tilde A_\d\big((\vec u_\d,\vec P_\d),(\tilde{\vec u},\tilde{\vec
      P}) ;\vec u_0\big)=J^1_\d+J^2_\d$$
with 
\begin{align*}
  J^1_\d&=  - \frac{1}{n}  \int_\O\int_\O \r_\d(\vec x {-} \vec x') \left(\mc{E}(\vec u_\d, \vec P_\d)\big(\vec
x, \vec x'\big) - \frac{1}{n} \mf{E}_\d(\vec u_\d,\vec P_\d)(\vec x)\right) \left(\mf{E}_\d(\tilde{\vec
  u},\tilde{\vec P})(\vec x) -\div \tilde{\vec u} (\vec x)\right) \,
\dd \vec x' \, \dd \vec x,\nonumber\\
J^2_\d&= - \frac{1}{n}  \int_\O\int_\O \r_\d(\vec x {-} \vec x')
\big(\mf{E}_\d(\vec u_\d,\vec P_\d)(\vec x) -\div \vec u_0 (\vec x)\big) \left(\mc{E}(\tilde{\vec u}, \tilde{\vec P})(\vec
x, \vec x') - \frac{1}{n} \div \tilde{\vec u} (\vec x)\right) \,
\dd \vec x' \, \dd \vec x
\end{align*}
and prove that $J^1_\d \to 0$ and $J^2_\d\to 0$ as $\d \to 0.$

As regards $J^1_\d$, one has the bound 
\begin{align*}
 \left| J^1_\d \right| &\leq \frac{1}{n} \left(\int_\O\int_\O \r_\d(\vec x {-} \vec x') \left(\mc{E}(\vec u_\d, \vec P_\d)(\vec
x, \vec x') - \frac{1}{n} \mf{E}_\d(\vec u_\d,\vec P_\d)(\vec
x)\right)^2  \,
\dd \vec x' \, \dd \vec x\right)^{1/2} \\
&\times \left(\int_\O\int_\O \r_\d(\vec x {-} \vec x') \big(\mf{E}_\d(\tilde{\vec
  u},\tilde{\vec P})(\vec x) -\div \tilde{\vec u} (\vec x)\big)^2  \,
\dd \vec x' \, \dd \vec x\right)^{1/2}.
\end{align*}
The first integral in the right-hand side above is bounded as $\| (\vec
u_\d,\vec P_\d)\|_{\mathcal T_\d}$ is bounded whereas the second
integral tends to $0$ because of Lemma \ref{le:Eddiv}.a. 

Next, we rewrite
\begin{align*}
J^2_\d =-\frac{1}{n}  \int_\O
\big(\mf{E}_\d(\vec u_\d,\vec P_\d)(\vec x) -\div \vec u_0 (\vec x)\big) \left(\int_\O \r_\d(\vec x {-} \vec x')\left(\mc{E}(\tilde{\vec u}, \tilde{\vec P})(\vec
x, \vec x') - \frac{1}{n} \div \tilde{\vec u} (\vec x)\right) \,
\dd \vec x' \right) \dd \vec x.
\end{align*}
We have that $\mf{E}_\d(\vec
u_\d,\vec P_\d) \weakc \div \vec u_0$ in $L^2(\Omega)$ by Lemma
\ref{le:Eddiv}.b. On the other hand, by arguing as in the proof
Proposition \ref{prop:limitFdup} one gets that the function 
$$ \vec x \mapsto \int_\O \r_\d(\vec x {-} \vec x')\left(\mc{E}(\tilde{\vec u}, \tilde{\vec P})(\vec
x, \vec x') - \frac{1}{n} \div \tilde{\vec u} (\vec x)\right) \,
\dd \vec x' $$
is strongly convergent in $L^2(\Omega)$ and $J^2_\d \to 0$ follows.
\bigskip

\noindent {\bf Step 2: decomposition of $\tilde A_\d$.} Owing to
\eqref{lim1} we now argue directly on $\tilde A_\d$ by decomposing it as
\begin{equation}
\tilde A_\d\big((\vec u_\d,\vec P_\d),(\tilde{\vec u},\tilde{\vec
      P}) ;\vec u_0\big) = I^1_\d + I^2_\d + I^3_\d + I^4_\d ,
\label{decomp}
\end{equation}
where 
\begin{align*}
  I^1_\d &=  \int_\O\int_\O \r_\d(\vec x {-} \vec x')\,
  \mc{E}(\vec u_\d, \vec P_\d)(\vec
x, \vec x')  \,\mc{E}(\tilde{\vec u}, \tilde{\vec P})(\vec
x, \vec x')  \,
\dd \vec x' \, \dd \vec x,\\
 I^2_\d &= -\frac{1}{n}  \int_\O\int_\O \r_\d(\vec x {-} \vec x') \, \mc{E}(\vec u_\d, \vec P_\d)(\vec
x, \vec x') \, \div \tilde{\vec u} (\vec x)  \,
\dd \vec x' \, \dd \vec x,\\
I^3_\d&=-\frac{1}{n}  \int_\O\int_\O \r_\d(\vec x {-} \vec x') \,\div
\vec u_0 (\vec x) \, \mc{E}(\tilde{\vec u}, \tilde{\vec P})(\vec
x, \vec x')  \,
\dd \vec x' \, \dd \vec x,\\
I^4_\d&=\frac{1}{n^2} \int_\O\int_\O \r_\d(\vec x {-} \vec x') \,\div
\vec u_0 (\vec x) \,\div \tilde{\vec u}(\vec x)  \,
\dd \vec x' \, \dd \vec x.
\end{align*}
We discuss each of these integrals in the following steps.
\bigskip

\noindent {\bf Step 3: Integral $I^1_\d$.} As in \eqref{eq:DEp}, we decompose the integral  
as $I^1_\d=I^{11}_\d+I^{12}_\d+I^{13}_\d$ where
\begin{align*}
I^{11}_\d& = -  \int_\O\int_\O \r_\d(\vec x - \vec
 x') \,\frac{\vec P_\d(\vec x)(\xx)}{|\xx|^2}\cdot (\xx) \, \mc{E}(\tilde{\vec u},\tilde{\vec P}) (\vec
x, \vec x') \, \dd \vec x' \, \dd
  \vec x,\\
  I^{12}_\d&=   \int_\O\int_\O \r_\d(\vec x - \vec
 x') \,\mc{D}(\vec u_\d-\vec u_0)(\vec x,\vec
  x')\,  \mc{E}(\tilde{\vec u},\tilde{\vec P}) (\vec
x, \vec x')\, \dd \vec x' \, \dd
  \vec x,\\
I^{13}_\d &=   \int_\O\int_\O \r_\d(\vec x - \vec
 x') \,\mc{D}(\vec u_0)(\vec x,\vec
  x')\,  \mc{E}(\tilde{\vec u},\tilde{\vec P}) (\vec
x, \vec x')\, \dd \vec x' \, \dd
  \vec x,
\end{align*}
and argue on each term separately.

In order to compute the limit of $I^{11}_\d$, let us further decompose
it as
\begin{align*}
  &I^{11}_\d =  I^{111}_\d +I^{112}_\d\\
&=  -  \int_\O\int_\O \r_\d(\vec x - \vec
 x') \,\frac{\vec P_\d(\vec x)(\xx)}{|\xx|^2}\cdot (\xx) \,
 \frac{(\nabla \tilde{\vec u}(\vec x) - \tilde{\vec P} (\vec
   x))(\xx)}{|\xx|^2}\cdot (\xx) \, \dd \vec x' \, \dd
  \vec x \\
&-   \int_\O\int_\O \r_\d(\vec x - \vec
 x') \,\frac{\vec P_\d(\vec x)(\xx)}{|\xx|^2}\cdot (\xx) \,
 \frac{(\tilde{\vec u} (\vec x') - \tilde{\vec u}(\vec x) -\nabla \tilde{\vec u}(\vec x)(\xx))}{|\xx|^2}\cdot (\xx) \, \dd \vec x' \, \dd
  \vec x.
\end{align*}
The limit of $I^{111}_\d$ can be computed by observing that the
integrand is positively homogeneous of degree $0$ in $\xx$. In
particular, arguing as in Lemma \ref{le:formulasB} we can prove that 
\begin{align*}
  \lim_{\d \to 0} \left[ I^{111}_\d + n \int_\O \dashint_{\mathbb S^{n-1}}\vec P_\d(\vec x)\vec z\cdot \vec
  z   \, (\nabla \tilde{\vec u}(\vec x) - \tilde{\vec P} (\vec
   x)) \vec z \cdot \vec z \, \dd \mc{H}^{n-1}(\vec z)\, \dd
  \vec x \right] = 0
\end{align*}
and then
\begin{align*}
 & \lim_{\d \to 0} - n \int_\O \dashint_{\mathbb S^{n-1}}\vec P_\d(\vec x)\vec z\cdot \vec
  z   \, (\nabla \tilde{\vec u}(\vec x) - \tilde{\vec P} (\vec
   x)) \vec z \cdot \vec z \, \dd \mc{H}^{n-1}(\vec z)\, \dd
  \vec x \\
  & = - n \int_\O \dashint_{\mathbb S^{n-1}}\vec P (\vec x)\vec z\cdot \vec
  z   \, (\nabla \tilde{\vec u}(\vec x) - \tilde{\vec P} (\vec
   x)) \vec z \cdot \vec z \, \dd \mc{H}^{n-1}(\vec z)\, \dd
  \vec x .
\end{align*}

In order to handle the integral $I^{112}_\d$ let us firstly observe that, as in \eqref{eq:diffgrad},
\begin{equation}  \left| \frac{(\tilde{\vec u}(\vec x') - \tilde{\vec u}(\vec x) - \nabla \tilde{\vec
      u}(\vec x) (\xx))}{|\xx|^2}\cdot (\xx) \right| \leq
  \sigma(|\xx|)\label{mod}
\end{equation}
where $\sigma$ is a modulus of continuity, and that, for all  
 $A\subset\subset \Omega$, $0<r<{\rm dist} (A,\partial
\Omega)$, and  $\vec x\in A$ we have, as in \eqref{eq:C2},
\begin{align*}
\int_{\O - \vec x} \r_\d(\tilde{\vec x})\sigma (|\tilde{\vec x}|) \,
\dd \tilde{\vec x} \leq n \sigma(r) + \| \sigma\|_\infty \int_{
  \Rn \setminus B(\vec 0,r)} \r_\d(\tilde{\vec x}) \,
\dd \tilde{\vec x}.
\end{align*}
Define now the tensor-valued functions
\begin{align*}
 \vec x \mapsto  \vec G_\d(\vec x) = \int_\Omega \r_\d(\vec x - \vec
 x') \frac{(\xx) \otimes (\xx)}{|\xx|^2} \frac{(\tilde{\vec u}(\vec x') - \tilde{\vec u}(\vec x) - \nabla \tilde{\vec
      u}(\vec x) (\xx))}{|\xx|^2}\cdot (\xx) \, \dd \vec x'
\end{align*}
and control them  for a.e.\ $\vec x\in A$ as follows
\begin{align*}
  \left| \vec G_\d(\vec x) \right|
\leq \int_{\O - \vec x} \r_\d(\tilde{\vec x}) \sigma(|\tilde{\vec
  x}|)\, \dd \tilde{\vec x}
\leq  n \sigma(r) + \| \sigma\|_\infty \int_{
  \Rn \setminus B(\vec 0,r)} \r_\d(\tilde{\vec x}) \,
\dd \tilde{\vec x}.
\end{align*}
As the right-hand side goes to $0$ as $\d \to 0$, $\sigma(r)$ can be made
arbitrarily small by choosing $r\to 0$, and $A \subset \subset \Omega$
is arbitrary we have proved that  $\vec G_\d(\vec x) \to \vec 0$ a.e. The
above bound proves additionally that  $\vec G_\d$ are equiintegrable. In
particular, $\vec G_\d \to \vec 0$ strongly in $L^2(\Omega)$. As $\vec P_\d$ is bounded in $L^2(\Omega;\Rnnsd)$ one gets that
$I^{112}_\d\to 0$ as $\d \to 0$.

The treatment of integral $I^{12}_\d$ requires a nonlocal
integration-by-parts formula, see \cite[Lemma 2.9]{MeDu15}. Indeed, for all $\varphi \in
C^\infty (\bar{\Omega} \times \bar{\Omega})$ a direct computation ensures
that 
\begin{align}
\int_\O \int_\O \r_\d(\xx) \mc{D}(\vec u)(\vec x, \vec x') \,
\varphi(\vec x, \vec x')\, \dd \vec x' \, \dd \vec x=- \int_\O \vec
u(\vec x) \cdot \mf{D}_\d^* (\varphi)(\vec x)\, \dd \vec x
\label{byparts}
\end{align}
where the vector-valued operator $\mf{D}^*_\d$ is given by 
$$\mf{D}^*_\d(\varphi)(\vec x) =  {\rm p.v.}
\int_\O  \r_\d(\xx)   \frac{\varphi(\vec x, \vec x')+
\varphi(\vec x'\,\vec x)}{|\xx|^2}(\xx)\, \dd \vec x'.$$
Let us apply formula \eqref{byparts} to $I^{12}_\d$, getting
$$ I^{12}_\d = -  \int_\Omega (\vec u_\d - \vec
u_0)(\vec x) \cdot \mf{D}^*_\d(\mc{E}(\tilde{\vec u},\tilde{\vec
  P}))(\vec x) \, \dd \vec x. $$
Since
$\vec u_\d \to \vec u_0$ strongly in $L^2(\Omega;\Rn)$, in order to
check that $I^{12}_\d\to 0$ as $\d\to 0$ one needs to provide an $L^2$
bound on $\mf{D}^*_\d(\mc{E}(\tilde{\vec u},\tilde{\vec P}))$. As
$\tilde{\vec u}$ and $\tilde{\vec P}$ are smooth, this follows along
the lines of \cite[Formula (2.3)]{MeSp15}. 

Let us now turn to the analysis of integral $I^{13}_\d$. Once again,
some further decomposition is needed. We write
$I^{13}_\d=I^{131}_\d+I^{132}_\d+I^{133}_\d $ where
\begin{align*}
  I^{131}_\d&=   \int_\O\int_\O \r_\d(\xx) \, \frac{\nabla
    \vec u_0(\vec x) (\xx)\cdot (\xx)}{|\xx|^2} \, \frac{(\nabla
    \tilde{\vec u}(\vec x)-\tilde{\vec P}(\vec x)) (\xx)\cdot
    (\xx)}{|\xx|^2} \, \dd \vec x' \, \dd \vec x,\\
I^{132}_\d&=   \int_\O\int_\O \r_\d(\xx) \, \frac{\nabla
    \vec u_0(\vec x) (\xx)\cdot (\xx)}{|\xx|^2} \, \frac{(\tilde{\vec
   u}(\vec x') - \tilde{\vec u}(\vec x) - \nabla
    \tilde{\vec u}(\vec x) (\xx))\cdot (\xx)}{|\xx|^2}\, \dd \vec x' \, \dd \vec x,\\
 I^{133}_\d&=   \int_\O\int_\O \r_\d(\xx) \, \frac{(\vec
   u_0(\vec x') - \vec u_0(\vec x) - \nabla
    \vec u_0(\vec x) (\xx))\cdot (\xx)}{|\xx|^2} \\
&\qquad\times  \frac{(\tilde{\vec
   u}(\vec x') - \tilde{\vec u}(\vec x) -\tilde{\vec P}(\vec x) (\xx))\cdot
    (\xx)}{|\xx|^2} \, \dd \vec x' \, \dd \vec x.\\
\end{align*}
The integrand of $I^{131}_\d$ is positively homogeneous of degree $0$
in $\xx$. By arguing as in Lemma \ref{le:formulasB} one can prove that 
$$I^{131}_\d \to  n \int_\O \dashint_{\mathbb
    S^{n-1}} \nabla \vec u_0(\vec x)  \vec z\cdot \vec
  z   \, \big(\nabla \tilde{\vec u}(\vec x) - \tilde{\vec P} (\vec
   x)\big) \vec z \cdot \vec z \,\, \dd \mc{H}^{n-1}(\vec z)\, \dd
  \vec x\quad \text{as $\d \to 0$}.$$
Integral $I^{132}_\d$ can be proved to converge to $0$ by arguing
similarly as in $I^{111}_\d$, as (compare with \eqref{mod})
$$\left| \frac{\nabla
    \vec u_0(\vec x) (\xx)\cdot (\xx)}{|\xx|^2} \, \frac{(\tilde{\vec
   u}(\vec x') - \tilde{\vec u}(\vec x) - \nabla
    \tilde{\vec u}(\vec x) (\xx))\cdot (\xx)}{|\xx|^2}\right| \leq
|\nabla \vec u_0(\vec x)| \sigma (|\xx|).$$
We aim now at proving that $I^{133}_\d$ goes to $0$ as well. As the function
$$(\vec x,\vec x') \mapsto \frac{(\tilde{\vec
   u}(\vec x') - \tilde{\vec u}(\vec x) -\tilde{\vec P}(\vec x)  (\xx))\cdot
    (\xx)}{|\xx|^2}$$
is bounded, such convergence would follow as soon as we check that the functions
$$(\vec x,\vec x') \mapsto \r_\d(\xx) \, \frac{(\vec
   u_0(\vec x') - \vec u_0(\vec x) - \nabla
    \vec u_0(\vec x) (\xx))\cdot (\xx)}{|\xx|^2}  $$
converge to $0$ strongly in $L^1(\Omega \times \Omega)$. In case of a smooth
function $\vec v$ this would follow from the bound
$$\left|\r_\d(\xx) \frac{(\vec v(\vec x') - \vec v(\vec x) - \nabla
    \vec v(\vec x) (\xx))\cdot (\xx)}{|\xx|^2} \right| \leq C 
\r_\d(\xx) \| {\rm D}^2 \vec v||_\infty |\xx|$$
by arguing as for $I^{111}_\d$. Fix then $\varepsilon>0$ and choose $\vec
v\in C^{\infty} (\bar{\Omega};\Rn)$ such that $\vec w = \vec u_0 - \vec
v$ fulfills $\| \vec w
\|_{H^1(\Omega ;\Rn)} \leq \varepsilon$. One has that 
\begin{align*}
 & \int_\O \int_\O \left| \r_\d(\xx) \, \frac{(\vec
   u_0(\vec x') - \vec u_0(\vec x) - \nabla
    \vec u_0(\vec x) (\xx))\cdot (\xx)}{|\xx|^2} \right| \dd \vec x'
\, \dd \vec x \nonumber\\
&\leq \int_\O \int_\O \left| \r_\d(\xx) \, \frac{(\vec
   v(\vec x') - \vec v(\vec x) - \nabla
    \vec v(\vec x) (\xx))\cdot (\xx)}{|\xx|^2} \right| \dd \vec x'
\, \dd \vec x\nonumber\\
& + 
\int_\O \int_\O \left| \r_\d(\xx) \, \frac{(\vec
  w(\vec x') - \vec w(\vec x) - \nabla
    \vec w(\vec x) (\xx))\cdot (\xx)}{|\xx|^2} \right| \dd \vec x'
\, \dd \vec x
\end{align*}
The first term in the above right-hand side goes to $0$ as $\delta \to
0$ because $\vec
v$ is smooth and the second can be treated as follows:
\begin{align*}
 & \int_\O \int_\O \left| \r_\d(\xx) \, \frac{(\vec
  w(\vec x') - \vec w(\vec x) - \nabla
    \vec w(\vec x) (\xx))\cdot (\xx)}{|\xx|^2} \right|\, \dd \vec x'
\, \dd \vec x \nonumber\\
&\leq \int_\O \int_\O \r_\d(\xx) \left|  \nabla
    \vec w(\vec x) \right|\, \dd \vec x'
\, \dd \vec x +  \int_\O \int_\O \r_\d(\xx) \left|  \frac{(\vec
  w(\vec x') - \vec w(\vec x))  \cdot (\xx)}{|\xx|^2} \right|\, \dd \vec x'
\, \dd \vec x\\
&\leq n\int_\O  \left|  \nabla
    \vec w(\vec x) \right|\, 
\, \dd \vec x +  \int_\O \int_\O \r_\d(\xx)  |  \frac{|\vec
  w(\vec x') - \vec w(\vec x)|  }{|\xx|}  \, \dd \vec x'
\, \dd \vec x\\
&\leq c\| \vec w \|_{H^1(\Omega;\Rn)} \leq c \varepsilon ,
\end{align*}
where we have also used \cite[Th.\ 1]{BoBrMi01} (see also \cite[Eq.\ (5)]{Ponce04}). As $\varepsilon$
is arbitrary, we conclude
that $I^{131}_\d$ goes to $0$ as $\delta \to 0$.

All in all, we have proved that 
\begin{equation}
  \label{I1final}
    I^{1}_\d \to   n \int_\O \dashint_{\mathbb
    S^{n-1}}\big( \nabla \vec u_0(\vec x) - \vec P_0(\vec x)\big)\vec z\cdot \vec
  z   \, \big(\nabla \tilde{\vec u}(\vec x) - \tilde{\vec P} (\vec
   x)\big) \vec z \cdot \vec z \, \,\dd \mc{H}^{n-1}(\vec z)\, \dd
  \vec x\quad \text{as $\d \to 0$} .
\end{equation}

\bigskip

\noindent {\bf Step 4:  Integrals $I^2_\d$, $I^3_\d$, and $I^4_\d$.} One can discuss integral
$I^2_\d$ by following the analysis of integral $I^1_\d$. Indeed, the
two integrals correspond to each other  upon changing
$\mc{E}(\tilde{\vec u},\tilde{\vec P})$ there with $\div \tilde{\vec
  u}/n$ here.   In particular, we have that 
\begin{equation}
  \label{I2final}
I^2_\d \to  -  \int_\O \dashint_{\mathbb
  S^{n-1}} \left(\nabla \vec u_0( \vec x) - \vec P_0(\vec x) \vec z \cdot \vec z \right) \div \tilde{\vec
  u}(\vec x)\, \dd \mc{H}^{n-1}(\vec
z) \, \dd \vec x \quad \text{as $\d \to 0$.}
\end{equation}

As for $I^3_\d$, we decompose $I^3_\d = I^{31}_\d + I^{32}_\d$, where
\begin{align*}
  I^{31}_\d & = -\frac{1}{n} \int_\O\int_\O \r_\d(\vec x {-} \vec x') \,\div
\vec u_0 (\vec x) \, \frac{(\nabla \tilde{\vec
      u}(\vec x) - \tilde{\vec P}(\vec x)) (\xx)}{|\xx|^2} \cdot (\xx)
    \,
\dd \vec x' \, \dd \vec x,\\
I^{32}_\d&=-\frac{1}{n} \int_\O\int_\O \r_\d(\vec x {-} \vec x') \,\div
\vec u_0 (\vec x) \, \frac{(\tilde{\vec u}(\vec x') - \tilde{\vec u}(\vec x) - \nabla \tilde{\vec
      u}(\vec x) (\xx))}{|\xx|^2}\cdot (\xx) \,
\dd \vec x' \, \dd \vec x.
\end{align*}
As the integrand of $I^{31}_\d$ is positively homogeneous of degree
$0$ in $\xx$, one can use Lemma \ref{le:formulasB}.a in order to get
that 
$$ I^{31}_\d \to - \int_\O \dashint_{\mathbb
  S^{n-1}}\div
\vec u_0 (\vec x) \, (\nabla \tilde{\vec
      u}(\vec x) - \tilde{\vec P}(\vec x)) \vec z \cdot \vec z \,\,\dd \mc{H}^{n-1}(\vec
z) \, \dd \vec x \quad \text{as $\d \to 0$}.$$
As regards integral $I^{32}_\d$, one can simply reproduce the
argument of $I^{112}_\d$ in order to check that $I^{32}_\d\to 0$ as
$\d \to 0$. This allows us to conclude that 
\begin{equation}
  \label{I3final}
  I^3_\d \to - \int_\O \dashint_{\mathbb
  S^{n-1}}\div
\vec u_0 (\vec x) \, (\nabla \tilde{\vec
      u}(\vec x) - \tilde{\vec P}(\vec x)) \vec z \cdot \vec z \,\,\dd \mc{H}^{n-1}(\vec
z) \, \dd \vec x \quad \text{as $\d \to 0$}.
\end{equation}

The treatment of term
$I^4_\d$ is rather straightforward as
\begin{align}
  I^4_\d =  \frac{1}{n^2} \int_\O \div\vec u_0(\vec x) \, \div
  \tilde{\vec u}(\vec x) \left(\int_\O \r_\d(\vec x {-} \vec x')\, \dd
    \vec x'\right) \, \dd \vec x \to \frac{1}{n} \int_\O \div \vec u_0(\vec x) \, \div
  \tilde{\vec u}(\vec x)  \, \dd \vec x \quad \text{as $\d \to 0$.}\label{I4final}
\end{align}
where we have used that $\int_\O \r_\d(\vec x {-} \vec x')\, \dd
    \vec x' \to n$ as $\d \to 0$.

\bigskip

\noindent {\bf Conclusion of the proof.} By recollecting \eqref{lim1},
the decomposition \eqref{decomp},
and the limits \eqref{I1final}, \eqref{I2final}, \eqref{I3final}, and
\eqref{I4final} we conclude that 
\begin{align*}
 &\lim_{\d \to 0} A_\d\big((\vec u_\d,\vec P_\d),(\tilde{\vec u},\tilde{\vec
      P})\big) \stackrel{\eqref{lim1}}{=} \lim_{\d \to 0} \tilde A_\d\big((\vec u_\d,\vec P_\d),(\tilde{\vec u},\tilde{\vec
      P});\vec u_0\big) \stackrel{\eqref{decomp}}{=} \lim_{\d \to 0}\left(
      I^1_\d + I^2_\d + I^3_\d + I^4_\d\right)\nonumber  \\
&\stackrel{\eqref{I1final}}{=}    n \int_\O \dashint_{\mathbb
    S^{n-1}}\big( \nabla \vec u_0(\vec x) - \vec P_0(\vec x)\big)\vec z\cdot \vec
  z   \, \big(\nabla \tilde{\vec u}(\vec x) - \tilde{\vec P} (\vec
   x)\big) \vec z \cdot \vec z \, \dd \mc{H}^{n-1}(\vec z)\, \dd
  \vec x\\
&\stackrel{\eqref{I2final}}{-}  \int_\O \dashint_{\mathbb
  S^{n-1}} \left(\nabla \vec u_0(\vec x) - \vec P_0(\vec x) \vec z \cdot \vec z \right) \div \tilde{\vec
  u}(\vec x)\, \dd \mc{H}^{n-1}(\vec
z) \, \dd \vec x\\
&\stackrel{\eqref{I3final}}{-}  \int_\O \dashint_{\mathbb
  S^{n-1}}\div
\vec u_0 (\vec x) \, (\nabla \tilde{\vec
      u}(\vec x) - \tilde{\vec P}(\vec x)) \vec z \cdot \vec z \,\dd \mc{H}^{n-1}(\vec
z) \, \dd \vec x\\ 
&\stackrel{\eqref{I4final}}{+} \frac{1}{n}\int_\O \div \vec u_0(\vec x) \, \div
  \tilde{\vec u}(\vec x)  \, \dd \vec x\\
&= n \int_{\O} \dashint_{\Sn} \left( (\nabla \vec u_0 (\vec x) - \vec
  P_0 (\vec x)) \vec z \cdot \vec z - \frac{1}{n} \div \vec u_0 (\vec
  x) \right) \left( (\nabla \tilde{\vec u} (\vec x) - \tilde{\vec P}
  (\vec x)) \vec z \cdot \vec z - \frac{1}{n} \div \tilde{\vec u}
  (\vec x) \right) \, \dd \Hn (\vec z) \, \dd \vec x ,
\end{align*}
which proves the convergence of the $\alpha$ term of $B_\d$. This concludes the proof.
\end{proof}


\begin{thebibliography}{10}

\bibitem{BeMo14}
{\sc J.~C. Bellido and C.~Mora-Corral}, {\em Existence for nonlocal variational
  problems in peridynamics}, SIAM J. Math. Anal., 46 (2014),
~890--916.

\bibitem{Bellido15}
{\sc J.~C. Bellido, C. Mora-Corral,  and   P. Pedregal}, {\em Hyperelasticity as a
$\Gamma$-limit of Peridynamics when the horizon goes to zero},   
  Calc. Var. Partial Differential Equations, 54 (2015),
1643--1670.

\bibitem{Handbook}
{\sc F. Bobaru, J.~T. Foster, P.~H. Geubelle,  and   S.~A. Silling (Eds.)},
{\em Handbook of Peridynamic Modeling}, Advances in Applied
Mathematics, CRC Press, 2017.


\bibitem{BoBrMi01}
{\sc J.~Bourgain, H.~Brezis, and P.~Mironescu}, {\em Another look at {S}obolev
  spaces}, in Optimal Control and Partial Differential Equations, J.~L.
  Menaldi, E.~Rofman, and A.~Sulem, eds., IOS Press, 2001, ~439--455.

\bibitem{Brezis02}
{\sc H.~Brezis}, {\em How to recognize constant functions. {A} connection with
  {S}obolev spaces}, Uspekhi Mat. Nauk, 57 (2002), ~59--74.

\bibitem{DalMaso}
{\sc G.~{Dal Maso}},
 {\em An introduction to {$\Gamma$}-convergence},
\newblock Progress in Nonlinear Differential Equations and their Applications,
  8. Birkh\"auser Boston Inc., Boston, MA, 1993.

\bibitem{DeGiorgi} 
{\sc E.~De Giorgi and T.~Franzoni}, {\em Su un tipo di convergenza
  variazionale}, Atti Accad. Naz. Lincei Rend. Cl. Sci. Fis. Mat. Natur. (8),
  58 (1975), 842--850.

\bibitem{DuGuLeZh12}
{\sc Q.~Du, M.~Gunzburger, R.~B. Lehoucq, and K.~Zhou}, {\em Analysis and
  approximation of nonlocal diffusion problems with volume constraints}, SIAM
  Rev., 54 (2012), ~667Ð--696.

\bibitem{DuGuLeZh13ELAS}
\leavevmode\vrule height 2pt depth -1.6pt width 23pt, {\em Analysis of the
  volume-constrained peridynamic {N}avier equation of linear elasticity}, J.
  Elasticity, 113 (2013), ~193--217.

\bibitem{DuGuLeZh13}
\leavevmode\vrule height 2pt depth -1.6pt width 23pt, {\em A nonlocal vector
  calculus, nonlocal volume-constrained problems, and nonlocal balance laws},
  Math. Models Methods Appl. Sci., 23 (2013), ~493--540.

\bibitem{Emmrich}
{\sc E. Emmrich,  R.~B. Lehoucq,  and   D. Puhst}, {\em Peridynamics: a
  nonlocal continuum theory}, In: M. Griebel, M.~A. Schweitzer  (eds),
Meshfree Methods for Partial Differential Equations VI, Lecture Notes
in Computational Science and Engineering, vol 89, 45--65, Springer, Berlin,
Heidelberg, 2008.

\bibitem{Emmrich15}
{\sc E. Emmrich  and     D. Puhst}, {\em
Survey of existence results in nonlinear peridynamics in comparison
with local elastodynamics},  Comput. Methods Appl. Math., 15 (2015),
483--496. 

\bibitem{Emmrich16}
\leavevmode\vrule height 2pt depth -1.6pt width 23pt, {\em A short note on modeling damage in
peridynamics},  J. Elasticity, 123 (2016), 245--252.


\bibitem{GuLe10}
{\sc M.~Gunzburger and R.~B. Lehoucq}, {\em A nonlocal vector calculus with
  application to nonlocal boundary value problems}, Multiscale Model. Simul., 8
  (2010), ~1581--1598.

\bibitem{Han-Reddy}
{\sc W.~Han and B.~D. Reddy}, {\em Plasticity}, vol.~9 of Interdisciplinary
  Applied Mathematics, Springer, New York, second~ed., 2013.
\newblock Mathematical theory and numerical analysis.

\bibitem{Hu12}
{\sc W. Hu, Y.~D. Ha,  and   F. Bobaru}, {\em Peridynamic model for dynamic fracture in
unidirectional fiber-reinforced composites},
  Comp. Methods Appl. Mech. Engrg., 217--220 (2012), 247--261.

\bibitem{Madenci16}
{\sc E. Madenci  and    S. Oterkus}, {\em Ordinary state-based peridynamics for plastic
deformation according to von Mises yield criteria with isotropic
hardening},  J. Mech. Phys. Solids, 86 (2016), 192--219. 

\bibitem{Mengesha12}
{\sc T.~Mengesha}, {\em Nonlocal {K}orn-type characterization of {S}obolev
  vector fields}, Commun. Contemp. Math., 14 (2012), ~1250028, 28.

\bibitem{MeDu15}
{\sc T.~Mengesha and Q.~Du}, {\em On the variational limit of a class of
  nonlocal functionals related to peridynamics}, Nonlinearity, 28 (2015),
  ~3999--4035.

\bibitem{MeDu14}
\leavevmode\vrule height 2pt depth -1.6pt width 23pt, {\em Nonlocal
  constrained value problems for a linear Peridynamic Navier
  equation}, J. Elasticity, 116 (2014), 27--51.

\bibitem{MeSp15}
{\sc T.~Mengesha and D.~Spector}, {\em Localization of nonlocal gradients in
  various topologies}, Calc. Var. Partial Differential Equations, 52 (2015),
  ~253--279.

\bibitem{MielkeRoubicek15}
{\sc A.~Mielke and T.~Roub\'\i\v{c}ek}, {\em Rate-independent
  systems. Theory and application},
  vol.~193 of Applied Mathematical Sciences, Springer, New York, 2015.
\newblock 

\bibitem{MRS}
{\sc A.~Mielke, T.~Roub\'\i\v{c}ek, and U.~Stefanelli}, {\em {$\Gamma$}-limits
  and relaxations for rate-independent evolutionary problems}, Calc. Var.
  Partial Differential Equations, 31 (2008), ~387--416.

\bibitem{Ponce04}
{\sc A.~C. Ponce}, {\em An estimate in the spirit of {P}oincar\'e's
  inequality}, J. Eur. Math. Soc.  (JEMS),  6 (2004), ~1--15.

\bibitem{Ponce04b}
\leavevmode\vrule height 2pt depth -1.6pt width 23pt, {\em A new approach to
  {S}obolev spaces and connections to {$\Gamma$}-convergence}, Calc. Var.
  Partial Differential Equations, 19 (2004), ~229--255.

\bibitem{Seleson09}
{\sc P. Seleson, M.~L. Parks, M. Gunzburger,  and   R.~B. Lehoucq}, {\em Peridynamics as
an upscaling of molecular dynamics},  Multiscale Model. Simul., 8  (2009), 204--227.


\bibitem{Silling00}
{\sc S.~A. Silling}, {\em Reformulation of elasticity theory for
  discontinuities and long-range forces}, J. Mech. Phys. Solids, 48 (2000),
  ~175--209.

\bibitem{Silling10}
{\sc S.~A. Silling}, {\em Linearized theory of peridynamic states},
J. Elasticity, 99 (2010), 85--111.

\bibitem{Silling-Lehoucq08}
{\sc S.~A. Silling and R.~B. Lehoucq}, {\em Convergence of Peridynamics
to classical elasticity theory},
J. Elasticity, 93 (2008), 13--37.

\bibitem{survey}
\leavevmode\vrule height 2pt depth -1.6pt width 23pt, {\em Peridynamic
  theory of solid mechanics}, Adv. Appl. Mech., 44 (2010), 73--166.

\end{thebibliography}
\end{document}